\newcommand{\N}{\mathbb N}
\newcommand{\Z}{\mathbb Z}
\newcommand{\R}{\mathbb R}
\newcommand{\T}{\mathbb{T}}
\def\R{\mathbb R}
\def\N{\mathbb N}
\def\Z{\mathbb Z}
\def\vep{\varepsilon}
\def\rg{\rangle} 
\def\lg{\langle}
\newcommand{\be}{\begin{equation}}
\newcommand{\ee}{\end{equation}}
\def\1{{\bf 1}}
\def\rife#1{(\ref{#1})}
\def\vep{\varepsilon}
\def\de{\delta}
\def\la{\lambda}
\DeclareMathOperator*{\essinf}{ess-inf}
\def\qqf{\qquad\forall}
\def\qmb{\qquad\mbox}
\def\ds{\displaystyle}
\def\vep{\varepsilon}
\newtheorem{Theorem}{Theorem}[section]
\newtheorem{Proposition}[Theorem]{Proposition}
\newtheorem{Lemma}[Theorem]{Lemma}
\newtheorem{Corollary}[Theorem]{Corollary}
\newtheorem{Remark}[Theorem]{Remark}
\begin{document}

\title[Sobolev regularity for the HJ equation]{Sobolev regularity for the first order Hamilton-Jacobi equation}
\author{Pierre Cardaliaguet}
\address{Ceremade, Universit\'e Paris-Dauphine,
Place du Maréchal de Lattre de Tassigny, 75775 Paris cedex 16 - France}
\email{cardaliaguet@ceremade.dauphine.fr }
\author{Alessio Porretta}
\address{Dipartimento di Matematica, Universit\`a di Roma Tor Vergata,
Via della Ricerca Scientifica 1, 00133 Roma (Italy)}
\email{porretta@mat.uniroma2.it}
\author{Daniela Tonon}
\address{Ceremade, Universit\'e Paris-Dauphine,
Place du Mar\'echal de Lattre de Tassigny, 75775 Paris cedex 16 - France}
\email{tonon@ceremade.dauphine.fr}
\thanks{Cardaliaguet was partially supported by the ANR (Agence Nationale de la Recherche) project  ANR-12-BS01-0008-01.}

\dedicatory{Version: \today}
\keywords{Hamilton-Jacobi equation, reverse Hölder inequality, Mean Field Games}
\subjclass[2013]{49L25, 35B65}

\maketitle

\begin{abstract} We provide Sobolev estimates for solutions of first order Hamilton-Jacobi equations with Hamiltonians which are superlinear in the gradient variable. We also show that the solutions are differentiable almost everywhere. The proof relies on an inverse Hölder inequality. Applications to mean field games are discussed.   
\end{abstract}

\section{Introduction}

The goal of the paper is to establish Sobolev estimates for solutions of first order Hamilton-Jacobi (HJ) equations of the form 
\be\label{HJ0}
\partial_t u +H(t,x,Du)= f(t,x).
\ee
Here we assume that $H$ has a $p-$growth in the gradient variable ($H=H(t,x,\xi)\approx |\xi|^p$ at infinity, with $p>1$) and $f$ is  a continuous map. By Sobolev estimates, we mean estimates of $u$ (in Sobolev spaces) which are independent of the regularity of $H$ and $f$ and depend only on the growth of $H$, the $L^r$ norm of $f$ and the $L^\infty$ norm of $u$.

The main result of this paper is a Sobolev regularity estimate for the solution of \eqref{HJ0} and its almost everywhere (a.e.) differentiability.  As far as we know, these questions have never been addressed before. Besides their intrinsic interest, they are motivated by the theory of mean field games (see Lasry-Lions \cite{lasry06, lasry06a}): our regularity result implies that ``weak solutions" of the mean field game systems satisfy the equation in a more classical sense (see section \ref{sec:MFG}). Indeed, one of the main consequences of our estimate is the gain of compactness for both the time-derivative and the Hamiltonian term in the equation.
\bigskip

We now state our main result. 
For $\rho>0$ let us set $Q_\rho:=(-\rho/2,\rho/2)^d$. Let $f:[0,1]\times \overline{Q_1}\to \R$ be continuous and nonnegative and $u$ be continuous on $[0,1]\times \overline{Q_1}$ and satisfy in the viscosity solutions' sense 
\be\label{ineq:subsolIntro}
\partial_t u +\frac{1}{\bar C} |Du|^p \leq f(t,x)\qquad  {\rm in }\; (0,1)\times Q_1
\ee
and 
\be\label{ineq:supersolIntro}
\partial_t u +\bar C |Du|^p \geq -\bar C\qquad  {\rm in }\; (0,1)\times Q_1
\ee
(for the notion of viscosity solution, see \cite{CIL}). 

\begin{Theorem}\label{th:main} Assume $p>1$ and $r>1+d/p$. Then $u\in W^{1,1}_{loc}((0,1)\times Q_1)$ and, for any $\delta>0$, there exists $\vep>0$ and  $M$ such that 
$$
\|\partial_t u \|_{L^{1+\vep}((\delta,1-\delta)\times Q_{1-\delta})} + \|Du \|_{L^{p(1+\vep)}((\delta,1-\delta)\times Q_{1-\delta})} \leq M, 
$$
where $\vep$ depends on $d$, $p$, $r$ and $\bar C$ while $M$ depends on $d$, $p$, $r$, $\bar C$, $\|f\|_r$, $\|u\|_\infty$ and  $\delta$.

Moreover $u$ is differentiable at almost every point of $(0,1)\times Q_1$.
\end{Theorem}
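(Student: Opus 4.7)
The plan is to establish the Sobolev estimate via a Gehring-type reverse H\"older inequality for $|Du|^p$, and to obtain the a.e.\ differentiability by a Stepanov-type argument that combines the Gehring bound with the one-sided Lipschitz structure coming from the sub- and super-solution inequalities. Throughout the argument I would first regularize $H$ and $f$, work with classical smooth solutions $u^\vep$, derive all estimates with constants independent of $\vep$, and pass to the limit at the end.

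The first ingredient is an oscillation estimate. The subsolution inequality admits the Hopf--Lax interpretation: along the straight segment $\gamma$ joining $(s,y)$ to $(t,x)$ with $s<t$,
\[
u(t,x)-u(s,y) \le c_p\,\bar C^{\frac{1}{p-1}}\frac{|x-y|^q}{(t-s)^{q-1}}+\int_s^t f(\tau,\gamma(\tau))\,d\tau,\qquad q=\tfrac{p}{p-1}.
\]
Averaging $y$ over a spatial cube $Q_\rho$ and applying Fubini with H\"older controls the $f$-term precisely under the threshold $r>1+d/p$, which is the sharp scaling for integrating $L^r$ data along $1$-dimensional curves in space-time. This produces a bound of the form
\[
u(t,x)-\frac{1}{|Q_\rho|}\int_{Q_\rho} u(s,y)\,dy \le C\frac{\rho^q}{(t-s)^{q-1}}+C(t-s)^{\alpha}\|f\|_r,
\]
for some $\alpha>0$. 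Symmetrically, the supersolution gives the pointwise lower bound $\partial_t u \ge -\bar C-\bar C|Du|^p$ and hence a matching control in the reverse time direction.

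The second ingredient is a Caccioppoli inequality: testing $\partial_t u+\frac{1}{\bar C}|Du|^p \le f$ against $\phi^p\in C^\infty_c$ and integrating by parts in time,
\[
\frac{1}{\bar C}\int \phi^p|Du|^p \le \int \phi^p f+p\int \phi^{p-1}|\partial_t\phi|\,|u-\bar u|,
\]
where $\bar u$ is any constant and may be chosen as the space-time average of $u$ on the larger cube. Bounding $|u-\bar u|$ by the oscillation estimate of the previous paragraph and using Young's inequality to absorb a small fraction of $\int|Du|^p$, one arrives at a reverse H\"older inequality for $g=|Du|^p$ of the form
\[
\frac{1}{|Q_\rho|}\int_{Q_\rho}|Du|^p \le C\Bigl(\frac{1}{|Q_{2\rho}|}\int_{Q_{2\rho}}|Du|^{p\theta}\Bigr)^{1/\theta}+C\|f\|_{L^r(Q_{2\rho})}^{\beta},
\]
with $\theta\in(0,1)$ and $\beta>0$ dictated by the scaling. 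Gehring's lemma then yields $|Du|^p\in L^{1+\vep}_{loc}$, and plugging this back into either the sub- or the super-solution inequality provides the matching $L^{1+\vep}$ bound for $\partial_t u$. For the a.e.\ differentiability, at each Lebesgue point of $|Du|^{p(1+\vep)}$ and $f$ the Hopf--Lax bound above gives a finite upper difference quotient for $u$ in every direction, while the supersolution gives the corresponding lower one, so Stepanov's theorem delivers classical differentiability almost everywhere.

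The main obstacle is the construction of the reverse H\"older inequality with exponent $\theta<1$ strictly less than one on the right-hand side, so that Gehring's lemma applies. This requires matching the Legendre-conjugate exponent $q$ of the Hopf--Lax bound, the integrability exponent $p\theta$ in the Caccioppoli absorption, and the space-time dimension $d+1$, in such a way that the $f$-contribution is controlled by $\|f\|_r$ exactly at the threshold $r>1+d/p$. This scaling match is the heart of the proof and forces the precise hypothesis on $r$.
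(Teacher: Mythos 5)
There is a genuine gap, and it sits exactly where the paper's difficulty lies. You treat the supersolution inequality \eqref{ineq:supersol} as if it gave the pointwise (or distributional) bound $\partial_t u\ge -\bar C-\bar C|Du|^p$, and you use this both for the ``matching control in the reverse time direction'' and, later, to get the $L^{1+\vep}$ bound on $\partial_t u$ by ``plugging back'' into the supersolution inequality. But \eqref{ineq:supersol} is only known in the viscosity sense; it is precisely \emph{not} known to hold a.e.\ or in the sense of distributions at this stage (only the subsolution inequality passes to distributions, Lemma \ref{lem:distrib}), and Remark \ref{remrem} shows that if one only assumes the inequalities a.e.\ the theorem is false. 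The only usable consequence of \eqref{ineq:supersol} is the existence of generalized characteristics along which $u$ decreases quantitatively (Lemma \ref{lem:supersol}), and the whole work of the paper is to convert this into a bound of the \emph{time} oscillation of $u$ by the \emph{space} average of $|Du|$ (Lemmata \ref{lem:pointcle} and \ref{lem:intutauu0bis}, via the energy $\xi=\int|\dot\gamma|^q$ of a well-chosen characteristic). Relatedly, your Caccioppoli-plus-oscillation scheme cannot produce a reverse H\"older inequality: the Hopf--Lax bound controls $u(t,x)-\fint u(s,\cdot)$ only by geometric quantities $\rho^q/(t-s)^{q-1}$ and $\|f\|_r$, with no $Du$ on the right-hand side, so there is nothing of the form $(\fint|Du|^{p\theta})^{1/\theta}$ to absorb into; moreover on standard cubes $Q_\rho\subset Q_{2\rho}$ the anisotropic scaling between time and space (time scale $\sim\lambda^{1-p}h$) cannot be matched, which is why Proposition \ref{lem:key} is formulated on intrinsic cylinders \`a la DiBenedetto/Kinnunen--Lewis under the stopping-time hypothesis \eqref{keyhyp}. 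The bound on $\partial_t u$ likewise cannot be read off from the inequalities: the paper must first prove a weak integral surrogate of the supersolution inequality (Lemma \ref{lem:estiu-u}, again via characteristics) to show $\partial_t u$ is an absolutely continuous measure in $L^{1+\vep}$.

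Your differentiability argument also does not go through as stated. Stepanov's theorem needs a finite pointwise Lipschitz-type bound $\limsup_{z\to z_0}|u(z)-u(z_0)|/|z-z_0|<\infty$ a.e.; the Hopf--Lax estimate is one-sided in time and degenerates like $|x-y|^q/(s-t)^{q-1}$ as $s\to t$, while the supersolution only yields information along one generalized characteristic, not in all directions. The paper instead proves a.e.\ differentiability by a blow-up at points of approximate differentiability (Proposition \ref{AeDiff}), using a uniform interior H\"older estimate (Proposition \ref{ReguHol}, itself obtained from a reverse H\"older inequality for $|\dot\gamma|$ along characteristics) to upgrade $L^1$ convergence of the rescalings to uniform convergence. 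Finally, a smaller point: regularizing $H$ and $f$ to ``work with classical solutions'' is delicate here, since $u$ is not assumed to solve an equation but two one-sided inequalities, and sup-/inf-convolutions preserve only one of them at a time; the paper works directly with the viscosity (in)equalities.
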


The result directly  applies to viscosity solutions of \eqref{HJ0} under growth conditions on the Hamiltonian, i.e., if there exists  $\bar C>0$ and $p>1$ such that 
\be\label{igrowth}
 \frac{1}{\bar C} |\xi|^p -\bar C\leq H(t,x,\xi)\leq \bar C |\xi|^p +\bar C.
 \ee
 The estimate is then independent of the regularity of the Hamiltonian.\\
 
  Let us now comment upon the assumptions. The fact that $f$ is nonnegative is of course irrelevant: it is however important that $f$ is bounded below, and in this case the constants $\vep$ and $M$ also depend on this bound.  
  
The result does not hold in general  if $H$ has linear growth in the gradient variable: for instance if $H$ is positively homogeneous in the gradient variable, then the equation is invariant by scaling: the solution is then of bounded variation and the best estimate one can expect is a BV bound. 

In a more subtle way, we do not expect the result to hold if $H$ is coercive but has a different growth from below and from above. Indeed, when $H$ is convex, it is associated with a problem of calculus of variations. If the corresponding Lagrangian has a different growth from above and from below, the optimal trajectories may have a very singular behavior (see \cite{AA} for instance) which seems to be incompatible with Sobolev estimates; for sure the solution is not H\"{o}lder continuous in this case. 

Finally, under the assumptions of the above Theorem, Sobolev regularity only holds for small $\vep$. This point is discussed in section \ref{sec:examples}. Let us underline that a quantification of such a constant is an open problem.  We also note in section \ref{sec:examples} that the result is {\it not} about the regularity of Sobolev functions satisfying some inequalities. Indeed the result does not hold if, for instance, we assume that $u$ is in $W^{1,1}$ and satisfies \eqref{ineq:subsolIntro} and \eqref{ineq:supersolIntro} a.e.: the super-solution inequality has to hold in a viscosity sense. So our result is genuinely about viscosity solutions of Hamilton-Jacobi (in)equations. \\

The idea that solutions of Hamilton-Jacobi equations which are coercive with respect to the  gradient variable enjoy unexpected regularity goes back to Capuzzo~Dolcetta, Leoni and the second author \cite{CDLP} (see also Barles \cite{Ba09}) who proved that subsolutions of stationary HJ equations of second order with super-quadratic growth in the gradient variable have Hölder bounds. The method relies on the construction of a suitable class of supersolutions. The result was later extended to equations with unbounded right-hand side by Dall'Aglio and the second author \cite{DaPo}. For HJ equations of evolution type, Hölder bounds were progressively obtained by the first author (\cite{ca08}, first order case, convex Hamiltonians), then in collaboration with Cannarsa (\cite{CC}, quasilinear case), with Rainer (\cite{cr11}, fully nonlinear, nonlocal equations), and with Silvestre (\cite{CS}, unbounded right-hand side). The proof in this setting is completely different, and more involved, than for the stationary case: it relies either on suitable one-dimensional reverse Hölder inequality (as in \cite{CC, ca08, cr11}) or on the method of improvement of oscillations (as in \cite{CS}). The regularity holds for solutions---but not for subsolutions---of the equation. 

Such results were initially motivated by homogenization, for which estimates of the solution independent of the regularity of the Hamiltonian is necessary. It is however the theory of mean field games which has motivated the analysis for Hamilton-Jacobi equation with possibly unbounded right-hand side: see  \cite{c1}.\\
 
 Some comments on the techniques of the proofs are now in order. Let us first recall the main argument for the {\it Hölder} regularity. In \cite{CC, ca08, cr11} the key point is that ``generalized characteristics" of the equation satisfy a (one-dimensional) reverse Hölder inequality. By generalized characteristics, we mean trajectories along which the solution enjoys a suitable monotonicity property (see Lemma \ref{lem:supersol}). Using a well-known result of Gehring \cite{Ge} (see also Giaquinta-Modica \cite{GiMo}) the reverse Hölder inequality implies an extra regularity for the generalized characteristics, and this in turn can be used to prove the Hölder regularity of the solution. The key feature of this construction is that the regularity of the solution is generated by pointwise estimates of this solution along suitable paths (generalized characteristics). Although quite different in spirit, the proof in  \cite{CS} also relies on the fact that Hamilton-Jacobi equations ``see points". In contrast, Sobolev estimates require {\it integral} bounds, which is not very natural for HJ equations. In particular, while viscosity solutions to  \eqref{ineq:subsolIntro} also satisfy the inequality  in the sense of distributions, a similar statement is not known for viscosity solutions to \eqref{ineq:supersolIntro}. 
 
 The main ingredient of our proof of the Sobolev estimate is still---not surprisingly---a reverse Hölder inequality. However, in contrast with the previous arguments, it is the gradient $Du$ of the solution which satisfies such an inequality---and not the generalized characteristics. To explain why such an inequality is plausible, let us (formally) integrate inequality \eqref{ineq:subsolIntro} satisfied by $u$ over a cube $(t,t+h)\times Q_h(x)$ (for $h>0$ small): we get
$$
\fint_{(t,t+h)\times Q_h(x)} |Du|^p \leq C \fint_{Q_h(x)} \frac{u(t,x)- u(t+h,x)}{h}\ dx + \fint_{(t,t+h)\times Q_h(x)}  f.
$$
In order to get a reverse Hölder inequality, one has to show that the right-hand side is bounded above 
 by an expression of the form $\ds C\left(\fint_{(t,t+h)\times Q_h(x)} |Du|\right)^p+C$. In other words, one has to quantify the oscillation in time of the solution by its oscillation in space. For this we use again the generalized characteristics. The rough idea is that  the solution cannot vary in time unless generalized characteristics bend sufficiently, thus propagating  the information over the space. The main difficulty of course is to find a quantitative way to explain this. We show that the correct encoding of the phenomenon is through an ``energy" of suitable generalized characteristics (see the comment after the reverse Hölder inequality given in Proposition \ref{lem:key}). 

As time and space play at different scales  in inequalities \eqref{ineq:subsolIntro} and \eqref{ineq:supersolIntro}, it is convenient to use ideas introduced by DiBenedetto for degenerate parabolic equations \cite{DiB} and refined by Kinnunen and Lewis \cite{KiLe}. This consists in working on space-time cubes which depend on the solution itself (see Proposition \ref{lem:key} for details). 

Once we know that the solution belongs to a Sobolev space, it is natural to ask for differentiability. We show that the solution is a.e. differentiable by a blow up argument at points of approximate differentiability: the (in)equations \eqref{ineq:subsolIntro} and \eqref{ineq:supersolIntro} being (almost) invariant by blow up, the H\"older estimates are valid at each scale, which provides the result.  \\

 Let us finally explain the organization of the paper: in section \ref{sec:prem} we recall basic results for subsolutions of \eqref{ineq:subsolIntro} and  supersolutions of \eqref{ineq:supersolIntro}. In particular we discuss the notion of generalized characteristics. In section \ref{sec:keykey}  we explain that $|Du|$ satisfies a  reverse H\"{o}lder inequality (Proposition \ref{lem:key}), which allows us to  prove the Sobolev estimate in the next section. Section \ref{sec:aediff} is devoted to the a.e. differentiability of the solution. We complete the paper by showing through an example that one cannot expect the result of the theorem to hold for large $\vep$ (section \ref{sec:examples}) and then by an application to mean field games (section \ref{sec:MFG}). \\

{\bf Notation:} Throughout the paper, we let, for $\rho>0$, $Q_\rho:=(-\rho/2,\rho/2)^d$ and, for $\sigma,\rho>0$, $\ds Q_{\sigma,\rho}= (-\sigma/2, \sigma/2)\times (-\rho/2, \rho/2)^d$. If $(t,x)\in \R^{d+1}$, we set $Q_{\sigma,\rho}(t,x)= (t-\sigma/2, t+\sigma/2)\times \prod_{i=1}^d (x_i-\rho/2, x_i+\rho/2)$. For $x\in \R^d$ and $r>0$, $B(x,r)$ denotes the closed ball centered at $x$  of radius $r$; we shorten   $B(0,1)$ into $B_1$ . If $E$ is a measurable subset of $\R^n$ ($n\geq 1$),  $|E|$ stands for its measure while, if $f:E\to \R$ is integrable, we denote by $\ds \fint_E f:= \frac{1}{|E|} \int_E f$ the average of $f$ on $E$. 

\section{Preliminaries}\label{sec:prem}

In this section we recall several known facts about inequalities  \eqref{ineq:subsolIntro} and \eqref{ineq:supersolIntro}. These results are related to the Lax formula and the comparison principle. For simplicity, we work from now on with {\it backward } Hamilton-Jacobi equations, i.e., with a continuous map $u$ which satisfies the following inequalities in the viscosity sense: 
\be\label{ineq:subsol}
-\partial_t u +\frac{1}{\bar C} |Du|^p \leq f(t,x)\qquad  {\rm in }\; (0,1)\times Q_1
\ee
and 
\be\label{ineq:supersol}
-\partial_t u +\bar C |Du|^p \geq -\bar C\qquad  {\rm in }\; (0,1)\times Q_1\,.
\ee
We recall that, throughout the paper, $p>1$ and  $r>1+d/p$ are given. We denote by $q$ the conjugate exponent of $p$: $1/p+1/q=1$. The purpose of  next two statements is to  point out how the value of $u$ at a point $(t,x)$ can be compared with the value at different points at times $s>t$.

Let us start first with   a consequence of inequality \eqref{ineq:subsol}.

\begin{Lemma}\label{lem:subsol} Fix $r_1\in (1+d/p,r)$, $\bar \alpha>0$ and $h>0$ such that $2\bar \alpha h<1$. If $u$ satisfies \eqref{ineq:subsol} in the viscosity sense, then for any $(t,x), (s,y) \in (0,1)\times Q_{\bar \alpha h}$ with $s>t$, 
$$
u(t,x)\leq u(s,y) +C \frac{(\bar \alpha h)^q}{(s-t)^{q-1}} + C(s-t) \left(\fint_{(s,t)\times Q_{2\bar \alpha h}} f^{r_1}\right)^{1/r_1},
$$
where $1/p+1/q=1$ and $C= C(p,\bar C)$.
\end{Lemma}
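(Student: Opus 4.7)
The plan is to combine the Lax-type upper bound for viscosity subsolutions of the backward HJ equation with an averaging argument over a carefully chosen family of curves joining $(t,x)$ to $(s,y)$.

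Since $u$ is a viscosity subsolution of $-\partial_t u+\bar C^{-1}|Du|^p\leq f$ and the Hamiltonian is convex in $Du$, a standard convex-duality argument (time-reverse to a forward equation, apply the classical Hopf--Lax inequality, then translate back) yields, for every Lipschitz curve $\gamma:[t,s]\to \overline{Q_1}$ with $\gamma(t)=x$ and $\gamma(s)=y$, the inequality
\[
u(t,x)\;\leq\;u(s,y)\;+\;\int_t^s\bigl[\,L(\gamma'(\sigma))+f(\sigma,\gamma(\sigma))\,\bigr]\,d\sigma,
\]
where $L(v)=c_{p,\bar C}|v|^q$ is the Legendre transform of $\xi\mapsto\bar C^{-1}|\xi|^p$.

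I would then apply this to the two-parameter family
\[
\gamma_z(\sigma)=x+\frac{\sigma-t}{s-t}(y-x)+\chi(\sigma)\,z,\qquad z\in Q_{\bar\alpha h},
\]
with a power-type bump $\chi(\sigma)=\bigl[4(\sigma-t)(s-\sigma)/(s-t)^2\bigr]^a$ (so $\max\chi\leq 1$), whose power $a>0$ will be chosen at the end. Each $\gamma_z$ satisfies $\gamma_z(t)=x$, $\gamma_z(s)=y$ and stays in $Q_{2\bar\alpha h}$. Averaging the Lax inequality in $z\in Q_{\bar\alpha h}$ reduces matters to two separate bounds. For the \emph{energy term}, expanding $|\gamma_z'|^q\leq C(|y-x|/(s-t))^q+C|z|^q|\chi'(\sigma)|^q$ and using the elementary identity $\int_t^s|\chi'(\sigma)|^q\,d\sigma\leq C(s-t)^{1-q}$---valid if $(a-1)q>-1$, i.e.\ $a>1/p$, so that the endpoint singularity is integrable---produces the required $C(\bar\alpha h)^q/(s-t)^{q-1}$. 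For the \emph{source term}, at fixed $\sigma$ the change of variables $z\mapsto x_\sigma+\chi(\sigma)z$ turns the $z$-average into a spatial average over $A_\sigma:=x_\sigma+\chi(\sigma)Q_{\bar\alpha h}\subset Q_{2\bar\alpha h}$; H\"older in $w$ bounds this by $(\chi(\sigma)\bar\alpha h)^{-d/r_1}\bigl(\int_{Q_{2\bar\alpha h}}f(\sigma,\cdot)^{r_1}\bigr)^{1/r_1}$, and H\"older in $\sigma$ with conjugate exponents $(r_1,r_1/(r_1-1))$ combined with $\int_t^s\chi(\sigma)^{-d/(r_1-1)}\,d\sigma\leq C(s-t)$, valid provided $a<(r_1-1)/d$, produces exactly $C(s-t)\bigl(\fint_{(t,s)\times Q_{2\bar\alpha h}}f^{r_1}\bigr)^{1/r_1}$.

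The key technical point---and the exact origin of the hypothesis $r_1>1+d/p$---is the compatibility of the two constraints on $a$: the bump $\chi$ must simultaneously be smooth enough for $\int_t^s|\chi'|^q\,d\sigma$ to be finite and flat enough near $t,s$ for the Jacobian $\chi^{-d}$ coming from the change of variables to be integrable in time, i.e.\ $a>1/p$ and $a<(r_1-1)/d$. The admissible interval $(1/p,(r_1-1)/d)$ is non-empty precisely when $r_1>1+d/p$; the whole argument therefore rests on balancing these two competing singularities with a single power-type profile $\chi(\sigma)\sim[(\sigma-t)(s-\sigma)]^a$.
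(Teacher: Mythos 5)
Your argument is correct and is essentially the paper's own proof: a Lax/comparison upper bound along curves, averaged over a family of straight lines perturbed by a bump vanishing at the endpoints, with the exponent $a\in(1/p,(r_1-1)/d)$ playing exactly the role of the paper's $\beta$ and the same two competing integrability constraints explaining the hypothesis $r_1>1+d/p$. The only differences are cosmetic (a single symmetric profile $[(\sigma-t)(s-\sigma)]^a$ and averaging over $z\in Q_{\bar\alpha h}$, versus the paper's piecewise profile $(\tau-t)^\beta$, $(s-\tau)^\beta$ with amplitude $\delta=\bar\alpha h(s-t)^{-\beta}$ and averaging over $\sigma\in B_1$).
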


\begin{proof} We follow mostly \cite{cg}. For $\sigma\in B_1$,  $\beta \in (\frac 1p, \frac{r_1-1}d)$ and $\delta:= \bar \alpha h (s-t)^{-\beta}$ let us set 
$$
\gamma_\sigma(\tau)= \left\{\begin{array}{ll}
 \gamma_0(\tau)+ \delta\sigma (\tau-t)^\beta & {\rm if } \; \tau \in [t, (t+s)/2]\\
 \gamma_0(\tau)+ \delta\sigma (s-\tau)^\beta& {\rm if } \; \tau \in [(t+s)/2, s]
\end{array}\right.
$$
where $\gamma_0(\tau)= x+ (\tau-t)(y-x)/(s-t)$. We note that $\gamma_\sigma(\tau)\in Q_{2\bar \alpha h}$ thanks to the choice of $\delta$ and the fact that $x,y\in Q_{\bar \alpha h}$. 

From standard comparison principle and representation formula for Hamilton-Jacobi (see \cite{CC}) we have 
$$
u(t,x)\leq u(s,y) +\int_t^s (C |\dot \gamma_\sigma(\tau)|^q +f(\tau, \gamma_\sigma( \tau)))\ d\tau.
$$
So 
$$
u(t,x)\leq u(s,y) +|B_1|^{-1}\int_{B_1}\int_t^s (C |\dot \gamma_\sigma(\tau)|^q +f(\tau, \gamma_\sigma( \tau)))\ d\tau d\sigma.
$$
Following \cite{cg}, we have 
$$
\begin{array}{rl}
\ds 
\int_t^s |\dot \gamma_\sigma(\tau)|^q\; \leq & \ds C\frac{|x-y|^q}{(s-t)^{q-1}}+ C\delta^q\left( \int_t^{(t+s)/2} (\tau-t)^{q(\beta-1)}d\tau+\int_{(t+s)/2}^s (s-\tau)^{q(\beta-1)}d\tau\right)\\
\leq &  \ds C\frac{|x-y|^q}{(s-t)^{q-1}}+ C\delta^q(s-t)^{q(\beta-1)+1} \leq  C\frac{|x-y|^q}{(s-t)^{q-1}}\; \leq \; C\frac{(\bar \alpha h)^q}{(s-t)^{q-1}}, 
\end{array}
$$
where we used  $\beta>\frac1p$ to ensure the convergence of the integrals and the definition of $\delta$ for the last inequality. 

On the other hand, using a change of variable and then Hölder's inequality, we have
$$
\begin{array}{l}
\ds  |B_1|^{-1}\int_{B_1}\int_t^{(t+s)/2} f(\tau, \gamma_\sigma( \tau))\ d\tau d\sigma \\ 
\qquad  \ds 
= C\int_t^{(t+s)/2} \int_{B(\gamma_0(\tau), \delta (\tau-t)^\beta)} \delta^{-d}(\tau-t)^{-\beta d} f(\tau, z)\ dzd\tau \\
\qquad \leq  \ds  C \left( \int_t^{(t+s)/2} \int_{B(\gamma_0(\tau), \delta (\tau-t)^\beta)} \delta^{-dr_1'}(\tau-t)^{-\beta dr_1'}\right)^{1/r_1'}
\left(\int_t^s \int_{Q_{2\bar \alpha h}} f^{r_1}\right)^{1/r_1}
\end{array}
$$
(we used the definition of  $\delta$ to ensure that $B(\gamma_0(\tau), \delta (\tau-t)^\beta)\subset Q_{2\bar \alpha h}$). As,  from the choice of  $\delta$ and using $\beta<\frac{r_1-1}d$,
$$
\begin{array}{rl}
\ds \left( \int_t^{(t+s)/2} \int_{B(\gamma_0(\tau), \delta (\tau-t)^\beta)} \delta^{-dr_1'}(\tau-t)^{-\beta dr_1'}\right)^{1/r_1'} 
= & \ds  C\delta^{-d/r_1}(s-t)^{-\beta d/r_1+ 1/r_1'}\\ 
= & \ds C (\bar \alpha h)^{-d/r_1} (s-t)^{1-1/r_1},
\end{array}
$$
we get
$$
\ds  |B_1|^{-1}\int_{B_1}\int_t^{(t+s)/2} f(\tau, \gamma_\sigma( \tau))\ d\tau d\sigma  \leq C(s-t) \left(\fint_{(t,s)\times Q_{2\bar \alpha h}} f^{r_1}\right)^{1/r_1}
$$
We can argue in the same way on the time-interval $((t+s)/2, s)$ to get the result. 
\end{proof}

The next Lemma, which is concerned with inequality  \eqref{ineq:supersol}, is standard and can be found, for instance, in \cite{CC}, Lemma 4.3.

\begin{Lemma}\label{lem:supersol} If $u$ is continuous on $[0,1]\times \overline{Q_1}$ and satisfies \eqref{ineq:supersol} in the viscosity sense, then, for any $(t,x)\in (0,1)\times Q_1$, there exists an absolutely continuous curve $\gamma$ with $\gamma(t)=x$ and, for any $s\in [t,1]$ such that $\gamma([t,s])\subset Q_1$,  
$$
u(t,x)\geq u(s,\gamma(s)) +\frac{1}{C} \int_t^s |\dot \gamma(\sigma)|^qd\sigma - C(s-t) ,
$$
where $C= C(p,\bar C)$. 
\end{Lemma}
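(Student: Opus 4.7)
The plan is to read the supersolution inequality as the Dynamic Programming inequality for an optimal control problem and to extract the required curve as an optimal trajectory. Since $H(\xi):=\bar C|\xi|^p$ is strictly convex and superlinear, its Legendre transform takes the form $L(v)=c_{p,\bar C}\,|v|^q$ with some $c_{p,\bar C}>0$ depending only on $p$ and $\bar C$.

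Concretely, I would fix $s\in(t,1]$ and introduce the value function
\[
V(\tau,y):=\inf\left\{u(s,\gamma(s))+\int_\tau^s \bigl[L(\dot\gamma(\sigma))-\bar C\bigr]\,d\sigma\right\},
\]
the infimum being taken over absolutely continuous curves $\gamma\colon[\tau,s]\to\overline{Q_1}$ with $\gamma(\tau)=y$. A standard Dynamic Programming argument shows that $V$ is a viscosity solution of $-\partial_\tau V+\bar C|DV|^p=-\bar C$ on the interior cylinder, with terminal data $V(s,\cdot)=u(s,\cdot)$---this is exactly the equation of which $u$ is a supersolution, as a short Legendre computation confirms. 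The usual comparison principle for this Hamilton-Jacobi equation then yields $u\geq V$ on $(0,s)\times Q_1$, and in particular $u(t,x)\geq V(t,x)$.

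It remains to realize the infimum by an actual curve: Tonelli's direct method in $W^{1,q}(t,s;\R^d)$ gives a minimizer $\gamma^*$ of the functional defining $V(t,x)$, since $L$ is strictly convex with $q$-growth and $u(s,\cdot)$ is continuous. Setting $C:=\max(c_{p,\bar C}^{-1},\bar C)$, this minimizer will satisfy
\begin{multline*}
u(t,x)\;\geq\; V(t,x)\;=\; u(s,\gamma^*(s))+\int_t^s L(\dot\gamma^*(\sigma))\,d\sigma-\bar C(s-t)\\
\;\geq\; u(s,\gamma^*(s))+\frac{1}{C}\int_t^s|\dot\gamma^*(\sigma)|^q\,d\sigma-C(s-t).
\end{multline*}
By the Dynamic Programming Principle applied to $V$, the same $\gamma^*$ remains optimal on every subinterval $[t,s']$, so the inequality persists along $\gamma^*$ for each $s'\in[t,s]$ with $\gamma^*([t,s'])\subset Q_1$, which is precisely the assertion of the lemma.

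The principal technical obstacle I expect is the interplay between the state constraint $\gamma\subset \overline{Q_1}$ and the comparison principle: one must check that the infimum defining $V$ is not lowered by curves that touch $\partial Q_1$, and that comparison can be invoked despite the absence of natural Dirichlet data on the lateral boundary. This is the standard (if delicate) point handled in \cite{CC}, and it can be bypassed either by extending $u$ to a slightly larger cube with values chosen large enough to penalize exit, or by restricting the comparison argument to the open set swept out by interior trajectories---the hypothesis $\gamma^*([t,s])\subset Q_1$ then makes the boundary effects irrelevant in the conclusion.
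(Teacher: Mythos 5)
The fixed-horizon construction proves the inequality only at the prescribed terminal time $s$, and the last step of your argument --- ``by the Dynamic Programming Principle the same $\gamma^*$ remains optimal on every subinterval, so the inequality persists'' --- is where it breaks. The DPP gives $V(t,x)=V(s',\gamma^*(s'))+\int_t^{s'}\bigl(L(\dot\gamma^*)-\bar C\bigr)$, but to turn this into the asserted inequality for $u$ at the intermediate time $s'$ you would need $V(s',\gamma^*(s'))\geq u(s',\gamma^*(s'))$, which is exactly the reverse of the comparison inequality $u\geq V$ you have just invoked. And it genuinely fails: take $u(t,x)=\phi(x)$ independent of time (any continuous $\phi$ is a viscosity supersolution of \eqref{ineq:supersol}), with $\phi\equiv 0$ near $x$ and a very deep well at distance of order one; for a short horizon $s-t$ the minimizer of $\phi(\gamma(s))+\int_t^s\bigl(c|\dot\gamma|^q-\bar C\bigr)$ races toward the well, so at intermediate times $s'$ one has $u(s',\gamma^*(s'))=u(t,x)$ while $\frac1C\int_t^{s'}|\dot\gamma^*|^q\gg C(s'-t)$, and the desired inequality fails along your curve (the lemma itself is of course satisfied there by the constant curve). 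The ``one curve, all $s$'' feature is not cosmetic: in the paper the curve is evaluated at a stopping time $\tau$ defined a posteriori (see \eqref{quasi-minbis} and the definition of $\tau$ in Section 3), so a curve attached to a horizon fixed in advance does not suffice.

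The correct route --- and the content of the result the paper invokes in lieu of a proof, namely Lemma 4.3 of \cite{CC} --- is to establish the stronger monotonicity statement that along a single curve the map $s\mapsto u(s,\gamma(s))+\frac1C\int_t^s|\dot\gamma|^q-C(s-t)$ is nonincreasing. This is obtained iteratively: on a fine partition $t=t_0<t_1<\cdots$ apply your comparison step on each subinterval with terminal datum $u(t_{i+1},\cdot)$ to produce an (approximate) minimizer issued from $(t_i,\gamma(t_i))$, concatenate, then refine the mesh, extract a weakly convergent subsequence in $W^{1,q}$ (the energy is uniformly bounded by $\|u\|_\infty$ plus $C$), and use weak lower semicontinuity of the $q$-energy together with the continuity of $u$ to pass the inequality to all times. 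Separately, your treatment of the lateral boundary is not yet a proof: an arbitrary extension of $u$ to a larger cube need not remain a supersolution across $\partial Q_1$, and $u$ is not known to be a supersolution in the constrained (up-to-the-boundary) sense, so the state-constraint comparison cannot be invoked as stated; this point is more easily handled by stopping the construction at the exit time from $Q_1$, which is all the lemma requires.
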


We say that $\gamma$ is a generalized characteristic for $u(t,x)$. Indeed, if $u$ is a solution of a Hamilton-Jacobi-Belmann equation, then any characteristic $\gamma$ satisfies the above inequality. 
\vskip0.5em

Next  we explain that inequality \eqref{ineq:subsol} can also be understood in the distributional sense: 

\begin{Lemma}\label{lem:distrib} If $u$ is continuous on $[0,1]\times \overline{Q_1}$ and satisfies \eqref{ineq:subsol} in the viscosity sense, then $u$ is of bounded variation (BV) in $(0,1)\times Q_1$, $Du\in L^p((0,1)\times Q_1)$ and \eqref{ineq:subsol} holds in the sense of distributions. 
\end{Lemma}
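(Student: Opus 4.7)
The plan is to regularize $u$ by sup-convolution, obtain an a.e.\ pointwise version of \eqref{ineq:subsol} for the regularization, derive uniform integral bounds, and pass to the limit. For $\vep>0$ small, I set
$$
u^\vep(t,x):=\sup_{(s,y)\in[0,1]\times\overline{Q_1}}\Bigl\{u(s,y)-\tfrac{1}{2\vep}\bigl((t-s)^2+|x-y|^2\bigr)\Bigr\}.
$$
Then $u^\vep$ is semi-convex, locally Lipschitz (hence differentiable a.e.\ by Rademacher), and $u^\vep\searrow u$ uniformly on compact subsets of $(0,1)\times Q_1$. Standard viscosity theory (see \cite{CIL}) ensures that, on any compact $K\subset\subset(0,1)\times Q_1$ and for $\vep$ small enough, $u^\vep$ is a viscosity subsolution of
$$
-\partial_t u^\vep+\frac{1}{\bar C}|Du^\vep|^p\leq f^\vep(t,x),
$$
where $f^\vep(t,x)=f(t_\vep,x_\vep)$ with $(t_\vep,x_\vep)$ a maximizer in the definition of $u^\vep(t,x)$; since $u$ is bounded, $|(t_\vep,x_\vep)-(t,x)|\leq C\sqrt\vep$, so by continuity of $f$ one has $f^\vep\to f$ uniformly. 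Because $u^\vep$ is differentiable a.e.\ and the Hamiltonian is continuous, the viscosity inequality automatically holds in the classical pointwise sense at every point of differentiability.

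Fix a sub-cylinder $K=(a,b)\times \omega\subset\subset(0,1)\times Q_1$. From $-\partial_t u^\vep\leq f^\vep$ a.e.\ I get $\int_K(\partial_t u^\vep)^-\leq\int_K f^\vep$, while the Lipschitz character of $u^\vep$ yields, via Fubini and the fundamental theorem of calculus,
$$
\int_K\partial_t u^\vep=\int_\omega\bigl(u^\vep(b,x)-u^\vep(a,x)\bigr)\,dx\leq 2|\omega|\,\|u^\vep\|_\infty.
$$
Adding these bounds $\|\partial_t u^\vep\|_{L^1(K)}$ uniformly in $\vep$, and plugging back into the a.e.\ inequality bounds $\|Du^\vep\|_{L^p(K)}$ uniformly. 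Since $\|u\|_\infty$ and $\|f\|_{L^1((0,1)\times Q_1)}$ are finite, these bounds stay controlled as $K\uparrow(0,1)\times Q_1$. Up to a subsequence, $Du^\vep\rightharpoonup Du$ weakly in $L^p$ and $\partial_t u^\vep\to \partial_t u$ weak-$\ast$ as measures, whence $u\in BV((0,1)\times Q_1)$ and $Du\in L^p((0,1)\times Q_1)$.

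For the distributional inequality, I test the a.e.\ inequality against a nonnegative $\varphi\in C^\infty_c((0,1)\times Q_1)$ and integrate by parts in time:
$$
\int u^\vep\,\partial_t\varphi+\frac{1}{\bar C}\int\varphi\,|Du^\vep|^p\leq \int\varphi f^\vep.
$$
The first and third integrals pass to the limit by the uniform convergences $u^\vep\to u$ and $f^\vep\to f$, and the middle term is controlled by lower semicontinuity of the convex functional $v\mapsto\int\varphi|v|^p$ on $L^p$ under the weak convergence $Du^\vep\rightharpoonup Du$. The only (mild) technical point to verify is that the sup-convolution really produces the perturbed subsolution inequality on a slightly smaller domain; this is classical but relies on controlling the maximizer $(t_\vep,x_\vep)$, which is ensured by the boundedness of $u$ and the fact that the maximum is attained at distance $O(\sqrt\vep)$ from $(t,x)$.
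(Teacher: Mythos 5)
Your proposal is correct and follows essentially the same route as the paper: sup-convolution, pointwise (a.e.) validity of the perturbed inequality for the Lipschitz regularization, uniform $L^1$ bounds on $\partial_t u^\vep$ and $L^p$ bounds on $Du^\vep$ by integration, and passage to the limit using uniform convergence together with convexity/weak lower semicontinuity of the gradient term. The extra details you supply (superdifferential characterization at points of differentiability, control of the maximizer at distance $O(\sqrt\vep)$) are just the standard facts the paper cites implicitly via \cite{CIL}.
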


\begin{proof} For $\vep>0$ small, let $u^\vep$ be the standard sup-convolution of $u$ (see \cite{CIL}). By continuity of $f$, $u^\vep$ is still a subsolution of the approximate inequality 
\be\label{ineq:subsolapprox}
-\partial_t u^\vep +\frac{1}{\bar C} |Du^\vep|^p \leq f(t,x)+\delta\qquad  {\rm in }\; (\delta,1-\delta)\times Q_{1-\delta}
\ee
where $\delta\to 0^+$ as $\vep\to 0$. As $u^\vep$ is a Lipschitz continuous subsolution, the above inequality is satisfied a.e. and therefore in the sense of distributions. Integrating \eqref{ineq:subsolapprox} in time-space shows that $Du^\vep$ is bounded in $L^p$. Hence $Du$ also belongs to $L^p$. Note that $\partial_t u^\vep$ is bounded below by $-\|f\|_\infty-1$ for $\vep$ small enough. As $u^\vep$ is uniformly bounded, $\partial_t u^\vep$ is bounded in $L^1$. Therefore $u^\vep$ is bounded in BV. Hence  $u$ belongs to BV. Using the fact that $u^\vep$ satisfies inequality \eqref{ineq:subsolapprox} in the sense of distributions, and that the nonlinearity is convex, we finally obtain that  \eqref{ineq:subsol} holds in the sense of distributions.
\end{proof}

\section{Key estimate}\label{sec:keykey}

Throughout this part $u: [-1/2,1/2]\times Q_1\to \R$ is a continuous map which satisfies  \eqref{ineq:subsol} and  \eqref{ineq:supersol} in  $[-1/2,1/2]\times Q_1$. Our aim is to show that, if $Du$ and $f$ are well estimated in some cube, then $Du$ satisfies a reverse Hölder inequality. To this purpose, we will need to use cubes with an intrinsic scaling.

Let us then introduce a family of parameters. As before, $p>1$ and  $r>1+d/p$ are fixed and we denote by $q$ the conjugate exponent of $p$: $1/p+1/q=1$. We also fix $r_1\in (1+d/p, r)$.  For constants $\lambda_0\geq 1$,  $\kappa\geq 1$ and $2\leq c_1\leq 5 c_1\leq c_2$ to be defined below and variables $\lambda\geq \lambda_0$ and $h>0$, we set 
$$
\sigma = \kappa\lambda^{1-p} 
$$
and  
$$
Q=Q_{\sigma h,  h}, \; Q'= Q_{c_1\sigma  h, c_1  h}, \; Q''= Q_{c_2\sigma h, c_2  h},
$$
where the cubes are centered at some generic point $(t_0,x_0)\in Q_{1,1}$, which for simplicity we may assume to be the origin $(0,0)$.
We also assume that  $ Q''\subset Q_{1, 1}$. 

The main result of this part is the following statement. 

\begin{Proposition}\label{lem:key} There exists a suitable choice of the constants $\lambda_0$, $\kappa$, $c_1$, $c_2$, depending only on $d$, $p$, $r_1$, $r$ and $\bar C$ such that, for any $\lambda\geq \lambda_0$ and $h>0$, if 
the following estimate holds:
\be\label{keyhyp}
\lambda^p \leq  \fint_{Q}(|Du|^p+ f^{r_1}) \leq c_2^{d+1} \fint_{Q''}(|Du|^p+ f^{r_1})\leq c_2^{d+1} \lambda^p, 
\ee
then we have
\be\label{keycl}
\fint_{Q''}|Du|^p \leq \hat C \left(\fint_{Q'} |Du|\right)^p+ \hat C \fint_{Q'} (1 + f^{r_1}),
\ee
 for some constant $\hat C$ independent of $\la, h$.
\end{Proposition}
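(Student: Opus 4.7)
The plan is to start from the distributional form of the subsolution inequality provided by Lemma \ref{lem:distrib}: integrating over $Q''$ and writing $t_1=-c_2\sigma h/2$, $t_2=c_2\sigma h/2$, one obtains
$$\fint_{Q''}|Du|^p \le C\fint_{Q''}f + \frac{C}{c_2\sigma h}\fint_{Q_{c_2 h}}\bigl[u(t_2,x)-u(t_1,x)\bigr]\,dx.$$
The entire task then becomes: bound the time oscillation on the right by $C(\fint_{Q'}|Du|)^p + C\fint_{Q'}(1+f^{r_1})$.

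For each $x\in Q_{c_2 h}$, Lemma \ref{lem:supersol} produces a generalized characteristic $\gamma_x$ with $\gamma_x(t_1)=x$ satisfying
$$u(t_2,x)-u(t_1,x) \leq \bigl[u(t_2,x)-u(t_2,\gamma_x(t_2))\bigr] - \tfrac{1}{C}\int_{t_1}^{t_2}|\dot\gamma_x|^q\,d\tau + Cc_2\sigma h.$$
Matching the resulting lower bound on $u(t_1,x)-u(t_2,\gamma_x(t_2))$ with the upper bound supplied by Lemma \ref{lem:subsol} applied to $(t_1,x)$ and $(t_2,\gamma_x(t_2))$ --- with $\bar\alpha h$ chosen of order $c_2 h$ so that both points lie inside $Q_{\bar\alpha h}$ --- yields the energy estimate
$$\int_{t_1}^{t_2}|\dot\gamma_x|^q\,d\tau \leq C\,\frac{(c_2 h)^q}{(c_2\sigma h)^{q-1}} + Cc_2\sigma h\Bigl(1+\bigl(\fint_{Q''}f^{r_1}\bigr)^{1/r_1}\Bigr).$$
Using the identity $(1-p)(q-1)=-1$ together with $\sigma=\kappa\lambda^{1-p}$, the leading term $(c_2 h)^q/(c_2\sigma h)^{q-1}$ equals $c_2 h\lambda\kappa^{-(q-1)}$, so after division by $c_2\sigma h$ it becomes $\lambda^p/\kappa^q$. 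Thanks to \eqref{keyhyp}, $\lambda^p$ is bounded by $c_2^{d+1}\fint_{Q''}(|Du|^p+f^{r_1})$, so by taking $\kappa$ large this contribution can be absorbed on the left-hand side of the target inequality. This is precisely the point of the intrinsic scaling $\sigma=\kappa\lambda^{1-p}$.

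The crucial remaining step is to control the spatial piece $\fint_{Q_{c_2 h}}\bigl[u(t_2,x)-u(t_2,\gamma_x(t_2))\bigr]\,dx$ by a term of the form $c_2\sigma h(\fint_{Q'}|Du|)^p$ plus admissible errors. The Hölder inequality $|\gamma_x(t_2)-x|^q\le(c_2\sigma h)^{q-1}\int|\dot\gamma_x|^q$ combined with the energy bound above forces $|\gamma_x(t_2)-x|\le Cc_2 h$, so the spatial difference is an oscillation of $u(t_2,\cdot)$ on a cube of size comparable to $c_2 h$. To pass from this time-slice oscillation to the full space-time average $\fint_{Q'}|Du|$, one would smooth $u(t_2,\cdot)$ by a short time-average (of length $\sim\sigma h$), the time shift being controlled by a further application of Lemma \ref{lem:subsol}, and then apply a spatial Poincaré inequality on the smoothed function to obtain a bound of the form $Ch\fint_{Q'}|Du|$ for the spatial difference. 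Young's inequality on the product $|\gamma_x(t_2)-x|\cdot\fint_{Q'}|Du|$ then splits it into $(c_2\sigma h)(\fint_{Q'}|Du|)^p$ plus a $\int|\dot\gamma_x|^q$ term that is absorbed by the favorable negative term displayed above.

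The main obstacle I foresee is precisely this last step: the conversion of a time-slice spatial oscillation into the space-time average $\fint_{Q'}|Du|$, given that there is no a priori trace regularity of $u$ at $t=t_2$. All smoothing and resulting errors must be kept within the threshold absorbable by the smallness $\kappa^{-q}$ or by the right-hand side term $\fint_{Q'}(1+f^{r_1})$. The order in which the constants are fixed --- first $\kappa$ large (to kill the $\lambda^p/\kappa^q$ contribution), then $c_2$ much larger than $c_1$ (to confine the characteristics within the domain of the equation), and finally $\lambda_0$ large (to close the bootstrap via \eqref{keyhyp}) --- is dictated by this absorption scheme.
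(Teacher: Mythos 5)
Your skeleton (integrate the distributional subsolution inequality to reduce everything to the time oscillation, then trade that oscillation against the energy $\int|\dot\gamma|^q$ of generalized characteristics via Lemmata \ref{lem:supersol} and \ref{lem:subsol}, and absorb the $\lambda^p\kappa^{-q}$ term coming from the intrinsic scaling by means of \eqref{keyhyp}) is close in spirit to the paper's, but the step you yourself flag as the main obstacle is a genuine gap, and the fix you sketch cannot work. You need an upper bound on the same-time difference $u(t_2,x)-u(t_2,\gamma_x(t_2))$, i.e.\ a \emph{lower} bound on $u(t_2,\cdot)$ evaluated at the endpoints of the characteristics. No Poincar\'e or time-smoothing argument can give a bound of the form $Ch\fint_{Q'}|Du|$ for this: a function with only $L^p$ spatial gradient (with $p\le d$ allowed) is not pointwise controlled by its gradient averages, the endpoints $x\mapsto\gamma_x(t_2)$ come from a measurable selection and may concentrate on a null set, and they concentrate precisely where $u$ is \emph{small}, since $u$ drops by the energy along a generalized characteristic (this is what \eqref{back} quantifies). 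Moreover the only pointwise comparison available, Lemma \ref{lem:subsol}, bounds earlier values by later ones, i.e.\ gives \emph{upper} bounds at $\gamma_x(t_2)$, not the lower bound your Young-inequality step requires. There is also a circularity in your energy estimate: applying Lemma \ref{lem:subsol} with $\bar\alpha\sim c_2$ presupposes that $\gamma_x(t_2)$ stays within $O(c_2h)$ and inside the domain where \eqref{ineq:supersol} holds, which you only deduce afterwards from that same estimate; this is why the paper introduces the stopping time $\tau$ (exit time from $Q_{c_1h/2}$) and treats separately the case where the characteristic does not exit, deriving a contradiction with the lower bound $\lambda^p\le\fint_Q(|Du|^p+f^{r_1})$ in Lemma \ref{lem:cas2bis}.

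The paper's resolution of the crux is structurally different from yours and worth internalizing: it launches a \emph{single} characteristic from a carefully chosen point, namely a good time slice $t$ from Lemma \ref{lem:choixdet} and a point $x$ with $u(t,x)=\fint_{Q_h}u(t)$, shows via Lemma \ref{lem:subsol} and the drop along the curve that $u(s,\cdot)\le u(t,x)-C_0^{-1}\xi$ on the whole ball $B(\gamma(t+\tau),h)$ for all $s\in[t,t+\tau/2]$ (Lemma \ref{lem:utyBbis}), and then runs the Poincar\'e inequality in the only direction in which integral bounds can be used: the coexistence on the same slice of a set of measure $\sim h^d$ where $u$ is low and a set where its average is high forces $\fint_{Q_{c_1h}}|Du(s)|\gtrsim(\xi/\tau)^{1/p}$ for a.e.\ such $s$ (Lemma \ref{lem:pointcle}); combined with the bound of the time oscillation by $\xi$ (Lemma \ref{lem:intutauu0bis}) and with $\tau\ge3\sigma h$ (Lemma \ref{lem:boundbelowtau}, which is where $\kappa$ is fixed), taking the essential infimum over $s\in(-\sigma h/2,\sigma h/2)$ produces exactly the term $(\fint_{Q'}|Du|)^p$. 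In other words, the spatial information is extracted as a lower bound on the gradient from a quantified separation of values on sets of definite measure, never as a pointwise estimate of $u$ at a characteristic endpoint; without this reversal your scheme cannot be closed.
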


The idea to consider cylinders with a size depending on the solution itself goes back to DiBenedetto \cite{DiB}. We borrow the precise formulation of Proposition \ref{lem:key} to the seminal paper by Kinnunen and Lewis \cite{KiLe} on parabolic systems of p-Laplacian type. The proof of the Lemma, however, is completely different from \cite{KiLe} since Hamilton-Jacobi equations and p-Laplace systems do not behave  at all in the same way. \\

The rest of the subsection is devoted to the proof of Proposition \ref{lem:key}. Since this will require several steps and technical details, let us try first to explain the general strategy. The starting point consists in integrating (in)equation \eqref{ineq:subsol} over the cube $Q$ to get: 
$$
\fint_{Q} |Du|^p \leq C \fint_{Q_h} \frac{u(\sigma h/2)- u(-\sigma h/2)}{\sigma h} + C \fint_{Q} f.
$$
The difficult part amounts to estimate the first term in the right-hand side by $\ds \left(\fint_{Q'} |Du|\right)^p$, that is to estimate the variation in time of $u$ by its variation in space. The main steps towards this goal are Lemmata \ref{lem:pointcle} and \ref{lem:intutauu0bis}, which show that the two quantities discussed above are both estimated by the energy $\xi:=\ds \int_t^{t+\tau} |\dot \gamma|^q$  of a generalized characteristic starting from a suitable position $(t,x)$. The choice of $(t,x)$ will come from  Lemma \ref{lem:choixdet} below.  

Let us stress once more the  crucial role played by this curve in our strategy. As you see from Section 2, if we need to estimate from above $u(\sigma h/2, \cdot)$, we can do that generically only with points at  a larger time (Lemma \ref{lem:subsol}). On the other hand, we can estimate from above $u$  with {\em some} value at previous times if we move back along a generalized characteristic (Lemma \ref{lem:supersol}). Therefore,  our strategy will be to catch a generalized characteristic $\gamma$ going from some point $(t,x)$ with $t<-\sigma h/2$ to some point $(t+\tau, \gamma(t+\tau))$ with $t+\tau>\sigma h/2$; in this way, we will, roughly speaking, estimate $u(\sigma h/2, \cdot)$ with $u(t+\tau, \gamma(t+\tau) )$ through Lemma \ref{lem:subsol}, then estimate $u(t+\tau, \gamma(t+\tau) )$ with $u(t,x)$ along the characteristic by Lemma \ref{lem:supersol}, and finally use  again  Lemma \ref{lem:subsol} to estimate $u(t,x)$ with  $u(-\sigma h/2, \cdot)$ since $t<-\sigma h/2$. Of course, we also need to check that catching  this characteristic curve is not taking us too far in space; indeed, the space variations of those points will be also measured through the bending of the curve. This explains why the energy $\xi:=\ds \int_t^{t+\tau} |\dot \gamma|^q$ will be a  good reference to estimate both time and space oscillations.

\vskip1em
Let us now proceed into the technical steps. We assume without loss of generality that $f\geq 1$.  Beside $\lambda_0$, $\kappa$, $c_1$, $c_2$, we introduce two other constants $\vep, \delta>0$ small.  Let us explain how we choose the various constants in order to avoid circular arguments. We will successively define $c_1$, $c_2$, $\kappa$, $\vep$, $\delta$  and $\lambda_0$. In other words,  $\lambda_0$ is a  (large)  function of $c_1$, $c_2$, $\kappa$, $\vep$ and $\delta$, $\delta$ is a (small) function of  $c_1$, $c_2$, $\kappa$ and $\vep$, etc... Throughout this part, $C$ denotes a generic constant, which varies from line to line and depends on $d$, $p$, $r_1$, $r$ and $\bar C$ but not on $h$, $\lambda$, $c_1$, $c_2$, $\kappa$, $\delta$, $\vep$ or $\lambda_0$.  

We first note that, if $\ds \delta \fint_{Q}|Du|^p <  \fint_{Q'} f^{r_1}$, then \eqref{keycl} holds in an obvious way. Indeed, by our assumption \eqref{keyhyp}, 
$$
\fint_{Q''}|Du|^p\leq \lambda^p\leq \fint_{Q}(|Du|^p+ f^{r_1})\leq
(\delta^{-1}+c_1^{d+1})  \fint_{Q'} f^{r_1}. 
$$
So we can assume from now on that 
\be\label{casetotreat}
\ds \fint_{Q'} f^{r_1}\leq  \delta \fint_{Q}|Du|^p.
\ee

The first step consists in building a suitable time at which we can fully exploit conditions \eqref{keyhyp}. 

\begin{Lemma}\label{lem:choixdet}
There exists $t\in (-\sigma h, -\sigma h/2)$, which is a Lebesgue point of $\ds s\to \fint_{Q_{c_1 h}} |Du(s)|^p$, such that 
\be\label{choixdet}
\fint_{Q_{c_1 h}} |Du(t)|^p \leq Cc_2^{d+1}  \lambda^p\qquad {\rm and} \qquad \sup_{\tau\in (0,c_2\sigma h/2)} \fint_{t-\tau}^{t+\tau} \fint_{ Q_{c_2 h}} f^{r_1} \leq C c_2\lambda^p. 
\ee
\end{Lemma}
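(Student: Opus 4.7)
The plan is a pigeonhole argument in the time variable. Introduce the two time profiles
\[
g(s) := \fint_{Q_{c_1 h}} |Du(s,\cdot)|^p\, dx, \qquad F(s) := \fint_{Q_{c_2 h}} f^{r_1}(s,\cdot)\, dx,
\]
and set $J := (-c_2\sigma h/2,\, c_2\sigma h/2)$, $I_0 := (-\sigma h,\, -\sigma h/2)$. I will exhibit two ``bad'' sets $E_1, E_2$ whose union meets $I_0$ in a set of measure strictly smaller than $|I_0| = \sigma h/2$; then any $t \in I_0 \setminus (E_1 \cup E_2)$ meets both required inequalities, and further restricting to the full-measure set of Lebesgue points of $g$ completes the selection.

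From hypothesis \eqref{keyhyp} and Fubini I immediately obtain $\int_J g \le (c_2/c_1)^d c_2\sigma h\, \lambda^p$ and $\int_J F \le c_2\sigma h\, \lambda^p$. For the first estimate Chebyshev gives
\[
\bigl|\{s \in J : g(s) > K (c_2/c_1)^d \lambda^p\}\bigr| \le c_2\sigma h / K,
\]
which is at most $\sigma h/8$ once $K$ is a sufficiently large fixed multiple of $c_2$; on the complement $g(t) \le C c_2^{d+1}\lambda^p$. For the second estimate I recognise the quantity $\sup_{\tau \in (0, c_2\sigma h/2)} \fint_{t-\tau}^{t+\tau} F$ as a truncated centered Hardy--Littlewood maximal function $\mathcal{M} F(t)$ and invoke its weak $(1,1)$-bound to get
\[
\bigl|\{t : \mathcal{M} F(t) > K' c_2 \lambda^p\}\bigr| \le \frac{C \|F\|_{L^1}}{K' c_2 \lambda^p} \le \frac{C\sigma h}{K'},
\]
again below $\sigma h/8$ for $K'$ a large fixed constant. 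Thus $|(E_1 \cup E_2) \cap I_0| \le \sigma h/4 < |I_0|$, and the selection succeeds.

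The step I expect to require the most care is the interpretation of the maximal function: for $t \in I_0$ and $\tau$ close to $c_2\sigma h/2$ the interval $(t-\tau,t+\tau)$ protrudes slightly below $J$, so the $L^1$-norm of $F$ fed into the weak $(1,1)$ estimate has to be taken over the slightly enlarged time-slab $\tilde J := (-(c_2+2)\sigma h/2,\,(c_2-1)\sigma h/2)$. Since $\tilde J$ still lies inside the ambient interval $[-1/2,1/2]$ on which $u$ and $f$ are defined, this is a routine bookkeeping step: one applies \eqref{keyhyp} to a shifted cylinder $\tilde Q''$ whose time-slab covers $\tilde J$ (the hypothesis is available at any generic center $(t_0,x_0) \in Q_{1,1}$, and a shift of size $\sigma h$ is legal once $h$ is small) and absorbs the resulting factor into $C$. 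Apart from this technicality, the argument is a clean combination of Chebyshev's inequality and the Hardy--Littlewood maximal inequality, with $K,K'$ and all implicit constants depending only on $d, p, r_1, r, \bar C$ through the eventual choice of $c_1, c_2$.
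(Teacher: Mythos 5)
Your overall strategy is exactly the paper's: a pigeonhole in $(-\sigma h,-\sigma h/2)$ combining a Chebyshev bound for the slice averages $s\mapsto \fint_{Q_{c_1h}}|Du(s)|^p$ with the weak $(1,1)$ inequality for a one-dimensional Hardy--Littlewood maximal function of $s\mapsto \fint_{Q_{c_2h}}f^{r_1}(s)$, each bad set having measure at most $\sigma h/8$, plus the (harmless) restriction to Lebesgue points. Up to constants, both halves of your estimate coincide with the paper's proof.

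The one step you flagged as delicate is, however, handled incorrectly. You cannot apply \eqref{keyhyp} to a shifted cylinder $\tilde Q''$: \eqref{keyhyp} is a hypothesis about the three \emph{fixed, concentric} cubes $Q\subset Q'\subset Q''$ centered at the given point (normalized to the origin), not a family of inequalities valid at ``any generic center''. Indeed, in the only place where Proposition \ref{lem:key} is invoked (Proposition \ref{prop:Dup+ep}), condition \eqref{keyhyp} is manufactured by a stopping-time choice of the radius $h_{t,x}$ for cylinders centered at the selected point $(t,x)$ only; nothing controls the average of $|Du|^p+f^{r_1}$ over a time-translated cylinder, so your ``routine bookkeeping step'' has no justification and the weak $(1,1)$ bound over $\tilde J$ is not fed by any available $L^1$ control. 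The paper avoids the issue differently: it sets $g(s)=\fint_{Q_{c_2h}}f^{r_1}(s)$ for $s$ in the time slab of $Q''$ and $g(s)=0$ outside, and applies the maximal inequality to this zero-extended function, so that $\|g\|_1\leq c_2\sigma h\,\lambda^p$ follows directly from \eqref{keyhyp}. This suffices because in every later use of \eqref{choixdet} (Lemmata \ref{lem:cas2bis}, \ref{lem:utyBbis}, \ref{lem:pointcle}, \ref{lem:intutauu0bis}) the time intervals actually involved are of the form $(t,t+\tau)$ with $t\in(-\sigma h,-\sigma h/2)$ and $\tau<c_2\sigma h/4$, hence contained in $(-c_2\sigma h/2,c_2\sigma h/2)$; the protruding part of the symmetric interval $(t-\tau,t+\tau)$ only ever enters through the maximal function and may be taken to carry the value $0$. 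So replace your shifted-cylinder argument by the zero extension (equivalently, state the second inequality in \eqref{choixdet} for the averages of $f^{r_1}\mathbf{1}_{Q''}$, which is what is needed); with that change your proof matches the paper's.
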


\begin{proof}
To prove the claim, let us set $\ds g(t)=  \fint_{Q_{c_2 h}} f^{r_1}(t)$ if $t\in (-c_2\sigma h/2,c_2\sigma h/2)$ and $g(t)=0$ otherwise. We introduce the maximal function $Mg$ associated with $g$ defined as $\ds Mg(t)= \sup_{\tau>0} \fint_{t-\tau}^{t+\tau} g(s)ds$. Then it is known (cf. \cite{stein}) that, for any $\alpha>0$,  
$$
\left| \left\{ t\in \R\;, \; Mg(t)\geq \alpha\right\}\right| \leq \frac{5}{\alpha} \|g\|_1. 
$$
As, by \eqref{keyhyp}, 
$$\ds \|g\|_1= c_2 \sigma  h \fint_{Q''} f^{r_1} \leq c_2\sigma  h  \lambda^p,
$$ we obtain
$$
\left| \left\{ t\in (-\sigma h, -\sigma h/2)\;, \; Mg(t)\geq C c_2 \lambda^p \right\}\right| \leq \sigma h/8. 
$$
On the other hand, as, by \eqref{keyhyp},  
$$
\ds 
\int_{-\sigma h}^{-\sigma h/2} \fint_{Q_{c_1 h}} |Du|^p\leq Cc_2^{d+1} \sigma h \fint_{Q''} |Du|^p\leq Cc_2^{d+1} \sigma h  \lambda^p,
$$ 
we have 
$$
\left| \left\{ t\in (-\sigma h, -\sigma h/2)\;, \; \fint_{Q_{c_1h}} |Du(t)|^p \geq Cc_2^{d+1} \lambda^p \right\}\right| \leq \sigma h/8. 
$$
Combining the two inequalities above shows the existence of a time $t\in  (-\sigma h, -\sigma h/2)$ such that \eqref{choixdet} holds. 
\end{proof}

Fix from now on $t$ as in Lemma \ref{lem:choixdet} and choose $x\in Q_h$ such that 
$\ds u(t,x)=\fint_{Q_h} u(t, y)\ dy$ and consider a generalized characteristic   $\gamma$  for $u(t,x)$, i.e., such that 
\be\label{quasi-minbis}
u(t,x)\geq u(s,\gamma(s)) +\frac{1}{C} \int_t^s |\dot \gamma(r)|^q\ dr  - C(s-t) \qquad \forall s\geq t.  
\ee
Recall that such a generalized characteristic exists thanks to Lemma \ref{lem:supersol}. Let us now define 
$$
\tau := \sup \left\{ t'\in (0,c_2\sigma h]: \gamma(s)\in Q_{c_1 h/2} \; \forall s\in [t, t+t']\right\}.
$$
So $\gamma(s)$ remains in $Q_{c_1 h/2}$ for $s\in [t,t+\tau]$ and, by definition, either $|\gamma(t+\tau)|= c_1 h/2$ or $\tau = c_2 \sigma h$. 
We set 
$$
\ds \xi:= \int_{t}^{t+\tau}|\dot \gamma(r)|^q \ dr\,.
$$ 
The quantity $\xi$ plays a major role in the next analysis since it allows  us to quantify the distortion of the map $u$ as explained through Lemmata \ref{lem:pointcle} and \ref{lem:intutauu0bis}. 

Let us first give some estimates on the time $\tau$ and the quantity $\xi$.

\begin{Lemma}\label{lem:cas2bis} We have
\be\label{cas2bis}
\tau<c_2 \sigma h/4\,.
\ee
Moreover we can choose $\la_0$ sufficiently large (depending on $c_1$, $c_2$, $\kappa$, $\vep$ and $\delta$) in order to have
\be\label{cas2bisbis}
\tau \left(\fint_{(t,t+\tau)\times Q_{c_1 h}} f^{r_1}\right)^{1/r_1} \leq \vep \xi\;.
\ee
\end{Lemma}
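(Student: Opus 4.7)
The main engine of the proof is the combination of the supersolution inequality of Lemma~\ref{lem:supersol} (used along the characteristic $\gamma$) with the subsolution inequality of Lemma~\ref{lem:subsol} (applied to the pair $(t,x)$ and $(t+\tau,\gamma(t+\tau))$). Since $\gamma([t,t+\tau])\subset Q_{c_1h/2}$ and $x\in Q_h\subset Q_{c_1h/2}$, Lemma~\ref{lem:subsol} applies with $\bar\alpha=c_1/2$, and subtracting the two inequalities I obtain the master upper bound
$$
\xi \;\leq\; C\tau + C\frac{(c_1h)^q}{\tau^{q-1}} + C\tau\left(\fint_{(t,t+\tau)\times Q_{c_1h}} f^{r_1}\right)^{1/r_1},
$$
which drives both parts of the lemma. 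I will freely use the relations $\sigma=\kappa\lambda^{1-p}$, $(p-1)(q-1)=1$, and $(1-p)q=-p$.

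For \eqref{cas2bis}, I argue by contradiction: suppose $\tau\geq c_2\sigma h/4$. By definition of $\tau$, either $\gamma$ has exited $Q_{c_1h/2}$ at time $t+\tau$ (so $|\gamma(t+\tau)|_\infty=c_1h/4$) or $\tau=c_2\sigma h$. In the exit case, $|x|_\infty<h/2$ together with the triangle and H\"older inequalities yields the geometric lower bound
$$
\xi\;\geq\;\frac{\bigl((c_1-2)h/4\bigr)^q}{\tau^{q-1}}.
$$
Combined with the master upper bound, controlling the $f$-term by \eqref{choixdet} (so it is of order $\tau\lambda^{p/r_1}$) and absorbing lower-order pieces thanks to \eqref{casetotreat}, a careful choice of $c_2$ and $\kappa$ sufficiently large compared to $c_1$ should make the upper and lower bounds on $\xi$ incompatible. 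In the alternative case $\tau=c_2\sigma h$, the geometric lower bound is unavailable, and one must exploit both the averaging property $u(t,x)=\fint_{Q_h}u(t,\cdot)$ and the bound $\fint_{Q_{c_1h}}|Du(t)|^p\lesssim\lambda^p$ (from \eqref{choixdet}) to quantify that the characteristic from such a spatially averaged $x$ must move with enough speed to traverse the distance $c_1h$ in time much less than $c_2\sigma h$ when $c_2\kappa$ is sufficiently large; I would make this precise by iterating the master estimate on the shorter interval $[t,t+c_2\sigma h/4]$ (where $\gamma$ still lies in $Q_{c_1h/2}$) and thus force the contradiction.

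For \eqref{cas2bisbis}, having \eqref{cas2bis} in hand, $\tau<c_2\sigma h/4<c_2\sigma h$ and so by the very definition of $\tau$ the characteristic actually exits $Q_{c_1h/2}$, so the geometric lower bound $\xi\geq((c_1-2)h/4)^q/\tau^{q-1}$ is available. Moreover $\tau<c_2\sigma h/2$ places us in the range of validity of \eqref{choixdet}, which yields $\fint_{(t,t+\tau)\times Q_{c_1h}}f^{r_1}\leq Cc_2(c_2/c_1)^d\lambda^p$. Putting these together,
$$
\frac{\tau\left(\fint_{(t,t+\tau)\times Q_{c_1h}} f^{r_1}\right)^{1/r_1}}{\xi}\;\leq\;\frac{C\tau^q\lambda^{p/r_1}}{\bigl((c_1-2)h/4\bigr)^q}\;\leq\;\frac{C(c_2\kappa)^q\,4^q}{(c_1-2)^q}\,\lambda^{-p(1-1/r_1)},
$$
where the last step uses $\tau\leq c_2\sigma h=c_2\kappa h\lambda^{1-p}$ and $(1-p)q=-p$. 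Since $r_1>1$, the exponent $-p(1-1/r_1)$ is strictly negative, and the ratio can be made smaller than $\vep$ by choosing $\lambda_0$ sufficiently large depending on $c_1,c_2,\kappa,\vep$.

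The hardest step in the whole argument is ruling out $\tau=c_2\sigma h$ in \eqref{cas2bis}: there the simple H\"older lower bound on $\xi$ is not automatic, and I expect that one must genuinely exploit the averaging choice of $x$ together with a quantitative use of \eqref{choixdet} to force $\gamma$ to move fast enough that it reaches the boundary of $Q_{c_1h/2}$ well before time $c_2\sigma h/4$.
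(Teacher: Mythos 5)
Your treatment of \eqref{cas2bisbis} is essentially the paper's argument (exit of $\gamma$ from the cube gives $\xi\gtrsim (c_1h)^q/\tau^{q-1}$ via H\"older, then \eqref{choixdet} and $\tau\leq c_2\sigma h$ give a factor $\lambda^{-p(1-1/r_1)}$ absorbed by taking $\lambda_0$ large), and that part is correct. The gap is in \eqref{cas2bis}. In your ``exit'' case the plan cannot work: your master upper bound, obtained by applying Lemma \ref{lem:subsol} to the pair $(t,x)$, $(t+\tau,\gamma(t+\tau))$, reads $\xi\leq C(c_1h)^q/\tau^{q-1}+\mbox{lower order}$, while your geometric lower bound is $\xi\geq c\,(c_1h)^q/\tau^{q-1}$ (up to harmless constants). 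These two bounds scale identically in $c_1,h,\tau$ and differ only by a universal constant $C=C(d,p,\bar C)$, so no choice of $c_2$ or $\kappa$ makes them incompatible; indeed this pair of bounds is exactly the content of Lemma \ref{lemineq:htauxibis}, which is perfectly consistent. In the case $\tau=c_2\sigma h$ (no exit) your proposal is only a sketch, and the heuristic ``the characteristic must move fast'' is not true: nothing in Lemmata \ref{lem:subsol}--\ref{lem:supersol} alone prevents $\gamma$ from standing still with $\xi$ arbitrarily small, so no contradiction can come from the characteristic by itself. What is missing is any use of the first inequality in \eqref{keyhyp} (together with \eqref{casetotreat}), which is precisely the hypothesis that makes \eqref{cas2bis} true.

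The paper's mechanism is different and does not compare two bounds on $\xi$ at all. Assuming $\tau\geq c_2\sigma h/4$, one applies Lemma \ref{lem:subsol} from the \emph{later} time level, i.e.\ to the pair $(\sigma h/2,y)$, $(t+\tau,\gamma(t+\tau))$ with $y\in Q_h$ and $\bar\alpha=c_1$: since the elapsed time $t+\tau-\sigma h/2\geq c_2\sigma h/8$ is long, the gradient-cost term is only $C(c_1h)^q/(c_2\sigma h)^{q-1}=Cc_1^qc_2^{1-q}\kappa^{1-q}\lambda h$, small when $c_2$ is large. One then goes back along the characteristic ($u(t+\tau,\gamma(t+\tau))\leq u(t,x)+C\tau$), uses $u(t,x)=\fint_{Q_h}u(t)$, and descends from time $t$ to $-\sigma h/2$ by integrating the distributional form of \eqref{ineq:subsol} (Lemma \ref{lem:distrib}). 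This bounds the time oscillation $\fint_{Q_h}\big(u(\sigma h/2)-u(-\sigma h/2)\big)$ by $Cc_1^qc_2^{1-q}\kappa^{1-q}\lambda h$; integrating \eqref{ineq:subsol} over $Q$ then yields $\fint_Q|Du|^p\leq Cc_1^qc_2^{1-q}\kappa^{-q}\lambda^p$, which contradicts $\frac{1}{1+c_1^{d+1}\delta}\lambda^p\leq\fint_Q|Du|^p$ (from \eqref{keyhyp} and \eqref{casetotreat}) once $\delta$ is small and $c_2$ large. Note also that this argument treats both of your subcases uniformly, since $\gamma(t+\tau)\in\overline{Q_{c_1h/2}}$ in either case; the exit dichotomy is only needed later, for \eqref{cas2bisbis}, where your argument is fine.
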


\vskip0.5em
Let us recall that $t$, as well as $\tau$, depend on all parameters $c_1, c_2, \kappa, \de,\lambda$ with $\la\geq \la_0$, and that $\la_0$ is the last parameter to be chosen once  the others have been fixed. In particular, as we assume $f\geq 1$, \rife{cas2bisbis} implies: 
\be\label{barctaubis}
\tau\leq \vep \xi.
\ee

\begin{proof}
To prove \rife{cas2bis}, we argue by contradiction, assuming $\tau\geq c_2 \sigma h/4$. Then
$$
t+\tau-\sigma h/2 \geq (c_2/4- 3/2)\sigma h\geq   c_2\sigma h/8
$$
for $c_2$ large enough, while 
$$
t+\tau-\sigma h/2\leq  \tau\leq c_2 \sigma h \,
$$
by definition of $\tau$.
With this in mind, we use Lemma \ref{lem:subsol} for $y\in Q_h$ and  $\bar \alpha= c_1$: 
$$
\begin{array}{l}
\ds u(\sigma h/2,y) \\
\quad  \leq  \ds u(t+\tau , \gamma(t+\tau)) + C \frac{(c_1h)^q}{(t+\tau-\sigma h/2)^{q-1}}
+ C (t+\tau-\sigma h/2)\left(\fint_{(\sigma h/2, t+\tau)\times Q_{2c_1 h}} f^{r_1}\right)^{1/r_1} \\
\quad \leq  \ds u(t,x) +C\tau + C \frac{(c_1 h)^q}{(c_2 \sigma h)^{q-1}}
+ C c_2^{1+d/r_1} \sigma h \left(\fint_{Q''} f^{r_1}\right)^{1/r_1}
\end{array}
$$
where the last inequality comes from \eqref{quasi-minbis}. Recalling \eqref{keyhyp} and the choice of $x$, and since $\tau\leq c_2\sigma h$,
$$
\begin{array}{rl}
\ds u(\sigma h/2,y) \; \leq & \ds \fint_{Q_h}u(t) +  C c_2 \sigma h + C c_1^qc_2^{1-q} \sigma^{1-q} h+ C c_2^{1+d/r_1} \sigma h \lambda^{p/r_1}\\
\leq & \ds \fint_{Q_h}u(t) + C c_2 \kappa \la h \la_0^{-p}+ C c_1^qc_2^{1-q}\kappa^{1-q} \lambda h+ C c_2^{1+d/r_1}  \kappa\lambda h\lambda_0^{-p(1-1/r_1)}
\end{array}
$$
where we have used the fact that $ \sigma=\kappa \lambda^{1-p}$ and $\lambda\geq \lambda_0$. So for $\lambda_0$ large enough (depending on $c_1$, $\kappa$ and $c_2$), we get
$$
\ds u(\sigma h/2,y) \; \leq \; \ds \fint_{Q_h}u(t) + C c_1^qc_2^{1-q}\kappa^{1-q}  \lambda h.
$$
Since inequality \eqref{ineq:subsol} is satisfied in the sense of distributions (Lemma \ref{lem:distrib}), we can integrate it over $(t, -\sigma h/2)\times Q_{h}$ to estimate $\ds \fint_{Q_h}u(t)$ by $\ds \fint_{Q_h}u(-\sigma h/2)$:
$$
\begin{array}{rl}
\ds \fint_{Q_h}u(t) \; \leq & \ds \fint_{Q_h}u(-\sigma h/2) + C\int_t^{-\sigma h/2}\fint_{Q_{h}}f \\
\leq & \ds  \fint_{Q_h}u(-\sigma h/2) + Cc_2^{(d+1)/r_1} \sigma h \left(\fint_{Q''}f^{r_1}\right)^{1/r_1}
\; \leq \; 
 \fint_{Q_h}u(-\sigma h/2) +  Cc_2^{(d+1)/r_1} \sigma h \lambda^{p/r_1}, 
\end{array}
$$
where we have used \eqref{keyhyp} in the last inequality. 
For $\lambda_0$ large, we can put the above inequalities together and derive 
$$
\fint_{Q_h} u(\sigma h/2)- u(-\sigma h/2) \leq C c_1^qc_2^{1-q}\kappa^{1-q} \lambda h.
$$
Integrating again inequality \eqref{ineq:subsol}, this time on $Q$, gives 
$$
\fint_{Q} |Du|^p \leq C \fint_{Q_h} \frac{u(\sigma h/2)- u(-\sigma h/2)}{\sigma h} + C \fint_{Q} f \leq C c_1^qc_2^{1-q}\kappa^{-q} \lambda^p,
$$
where we have absorbed the term $\ds \fint_{Q} f$ as above. Since, by \eqref{casetotreat}, 
$$
\ds \fint_{Q} f^{r_1}\leq c_1^{d+1}\fint_{Q'} f^{r_1}\leq  c_1^{d+1} \delta \fint_{Q}|Du|^p,
$$
the first inequality in \eqref{keyhyp} implies
$$
\frac1{1+c_1^{d+1}\delta} \lambda^p \leq \fint_{Q} |Du|^p \leq C c_1^qc_2^{1-q} \kappa^{-q} \lambda^p,
$$
which is impossible for $\delta$ small and $c_2$ large enough (depending on $c_1$ and $\kappa$). This shows  \eqref{cas2bis}. The proof of \rife{cas2bisbis} is easier: we now know by the choice of $\tau$ and since $\tau< c_2\sigma h$ that $|\gamma (t+\tau)|=c_1 h/2 $. So, as $x\in Q_h$ and $c_1\geq 2$, 
\be\label{xitaulambda}
c_1 h /4 \leq |\gamma (t+\tau)-x| \leq  \int_{t}^{t+\tau} |\dot \gamma| \leq \xi^{1/q}\tau^{1/p}. 
\ee
Thus, on the one hand, 
$$
\xi\geq C^{-1} \tau (c_1 h\tau^{-1})^q> C^{-1} \tau  c_1^q c_2^{-q} \kappa^{-q} \lambda^p,
$$
where we have used $\tau< c_2\sigma h$ and $\sigma = \kappa\lambda^{1-p}$ to get the last inequality.  In particular, we deduce that
\be\label{khbezoerl}
\tau < K(c_1, c_2, \kappa) \,  \la^{-p}\, \xi
\ee
for a constant $K$ depending on $c_1,c_2,\kappa$. 
On the other hand, by \eqref{choixdet},  
$$
 \left(\fint_{(t,t+\tau)\times Q_{c_1 h}} f^{r_1}\right)^{1/r_1} \leq  Cc_2^{d/r_1}\left( \fint_{t-\tau}^{t+\tau}\fint_{Q_{c_2 h}} f^{r_1}\right)^{1/r_1}\leq
 Cc_2^{d/r_1} (Cc_2\lambda^p)^{1/r_1}
$$
so using  \eqref{khbezoerl} we infer that
$$
\tau \left(\fint_{(t,t+\tau)\times Q_{c_1 h}} f^{r_1}\right)^{1/r_1} \leq  \vep \xi
$$
provided  $\lambda_0$ is large enough. This shows \eqref{cas2bisbis}. 
\end{proof}

Next we estimate the relation between $\tau$, $\xi$ and $h$ slightly more carefully: 
\begin{Lemma}\label{lemineq:htauxibis} We have
\be\label{ineq:htauxibis}
\frac{1}{C} \frac{(c_1 h)^q}{\tau^{q-1}} \leq \xi \leq C \frac{(c_1 h)^q}{\tau^{q-1}}.
\ee
\end{Lemma}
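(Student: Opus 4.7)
The strategy is to sandwich the difference $u(t,x) - u(t+\tau,\gamma(t+\tau))$ between the lower bound coming from Lemma \ref{lem:supersol} (along the generalized characteristic) and the upper bound coming from Lemma \ref{lem:subsol} (which controls $u$ by \emph{any} path connecting the two endpoints, including a nearly straight one).

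\medskip

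\emph{Lower bound on $\xi$.} This was essentially already obtained in the proof of Lemma \ref{lem:cas2bis}. Since $\tau < c_2\sigma h/4 < c_2\sigma h$ (by \eqref{cas2bis}), the curve $\gamma$ must exit the cube $Q_{c_1 h/2}$ at time $t+\tau$, so $|\gamma(t+\tau)|_\infty = c_1 h/2$. Combined with $x\in Q_h$ and $c_1\geq 2$, this gives $|\gamma(t+\tau)-x|\geq c_1 h/4$. By H\"older's inequality,
$$
\frac{c_1 h}{4} \;\leq\; \left|\gamma(t+\tau)-x\right| \;\leq\; \int_t^{t+\tau} |\dot\gamma(r)|\,dr \;\leq\; \xi^{1/q}\,\tau^{1/p},
$$
which yields $\xi \geq C^{-1}(c_1 h)^q/\tau^{q-1}$.

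\medskip

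\emph{Upper bound on $\xi$.} Since both $x\in Q_h$ and $\gamma(t+\tau)\in Q_{c_1 h/2}$ belong to $Q_{c_1 h}$, and $2c_1 h \leq 2c_2 h/5 < 1$, we may apply Lemma \ref{lem:subsol} with $\bar\alpha = c_1$ to the points $(t,x)$ and $(t+\tau,\gamma(t+\tau))$, obtaining
$$
u(t,x) \;\leq\; u(t+\tau,\gamma(t+\tau)) + C\,\frac{(c_1 h)^q}{\tau^{q-1}} + C\tau\left(\fint_{(t,t+\tau)\times Q_{2c_1 h}} f^{r_1}\right)^{1/r_1}.
$$
On the other hand, by the defining property \eqref{quasi-minbis} of the generalized characteristic $\gamma$ at $s=t+\tau$,
$$
u(t,x) \;\geq\; u(t+\tau,\gamma(t+\tau)) + \tfrac{1}{C}\,\xi - C\tau.
$$
Subtracting these two inequalities yields
$$
\frac{1}{C}\,\xi \;\leq\; C\,\frac{(c_1 h)^q}{\tau^{q-1}} + C\tau\left(\fint_{(t,t+\tau)\times Q_{2c_1 h}} f^{r_1}\right)^{1/r_1} + C\tau.
$$
Now invoke \eqref{cas2bisbis} and \eqref{barctaubis} (both of which hold thanks to the choice of $\lambda_0$ at the end of Lemma \ref{lem:cas2bis}): each of the last two terms is bounded by $C\vep\,\xi$. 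Fixing $\vep$ small enough (depending only on $d,p,r_1,\bar C$, hence admissible since $\vep$ is chosen before $\lambda_0$), we absorb the $\vep\xi$ terms into the left-hand side and conclude $\xi \leq C(c_1 h)^q/\tau^{q-1}$.

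\medskip

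There is no real obstacle: the heart of the argument is the pairing of the one-sided inequalities of Lemma \ref{lem:subsol} and Lemma \ref{lem:supersol} at the endpoints $(t,x)$ and $(t+\tau,\gamma(t+\tau))$. The smallness estimates \eqref{cas2bisbis}--\eqref{barctaubis} are precisely designed so that the error terms produced by the forcing $f$ and the linear-in-$\tau$ slack in Lemma \ref{lem:supersol} do not spoil the comparison. The only thing to verify carefully is that both points sit in a common cube $Q_{\bar\alpha h}$ to which Lemma \ref{lem:subsol} applies, which is immediate from $c_1\geq 2$ and the ordering $5c_1\leq c_2$, $c_2 h<1$.
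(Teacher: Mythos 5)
Your proposal is correct and follows essentially the same route as the paper: the lower bound comes from \eqref{xitaulambda}, and the upper bound from pairing Lemma \ref{lem:subsol} with the characteristic inequality \eqref{quasi-minbis} and absorbing the error terms via \eqref{cas2bisbis}--\eqref{barctaubis}. The only cosmetic difference is that you apply Lemma \ref{lem:subsol} with $\bar\alpha=c_1$ (giving an average over $Q_{2c_1h}$) whereas the paper effectively uses $\bar\alpha=c_1/2$ so that \eqref{cas2bisbis} applies verbatim; since $2c_1\leq c_2$, the same smallness estimate covers your larger cube, so this is harmless.
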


\begin{proof} The first  inequality in \eqref{ineq:htauxibis} can be directly deduced from \eqref{xitaulambda}. For the second one, we have by \eqref{quasi-minbis}
$$
u(t+\tau, \gamma(t+\tau)) \leq u(t,x) -\frac{1}{C} \xi+ C\tau
$$
and on account of \eqref{barctaubis} we get
\be\label{back}
u(t+\tau, \gamma(t+\tau))  \leq u(t,x) -\frac{1}{2C} \xi,
\ee
up to choosing $\la_0$ large enough. 
Then Lemma \ref{lem:subsol} implies that
$$
\begin{array}{rl}
\ds u(t,x)\; \leq & \ds  u(t+\tau, \gamma(t+\tau)) + C \frac{(c_1 h)^q}{\tau^{q-1}} + C\tau\left(\fint_{(t,t+\tau)\times Q_{c_1 h}} f^{r_1}\right)^{1/r_1} \\
\leq & \ds u(t,x)-\frac{1}{2C} \xi + C \frac{c_1^qh^q}{\tau^{q-1}} + C\tau\left(\fint_{(t,t+\tau)\times Q_{c_1 h}} f^{r_1}\right)^{1/r_1} \leq \;  u(t,x)-\frac{1}{3C} \xi + C \frac{c_1^qh^q}{\tau^{q-1}}
\end{array}
$$
thanks to \eqref{cas2bisbis}. This implies the right-hand inequality in \eqref{ineq:htauxibis}. 
%
\end{proof}

In the next step, we explain that $u(s,\cdot)$ is small  in a neighborhood of $\gamma(t+\tau)$ for any $s\in [t,t+\tau/2]$: 

\begin{Lemma}\label{lem:utyBbis} There exists $C_0\geq 1$, depending only on $d$, $p$, $r_1$, $r$ and $\bar C$, such that
\be\label{utyBbis}
 u(s,y)\leq u(t,x)-C_0^{-1} \xi \qquad \forall  (s,y)\in [t,t+\tau/2] \times B(\gamma(t+\tau), h).
 \ee
\end{Lemma}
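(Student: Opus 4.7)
The plan is to bound $u(s,y)$ from above by applying Lemma~\ref{lem:subsol} (in its obvious translated form) from $(s,y)$ forward to $(t+\tau,\gamma(t+\tau))$, and then to invoke \eqref{back} to recover $u(t,x)$. The underlying picture is that the generalized characteristic has already spent the full energy $\xi$ between $t$ and $t+\tau$, so $u(t+\tau,\gamma(t+\tau))\le u(t,x)-\xi/(2C)$; it then remains to show that transporting from $(t+\tau,\gamma(t+\tau))$ back to an arbitrary $(s,y)$ in the prescribed cylinder only costs a small fraction of $\xi$.

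The key technical point is that since $|y-\gamma(t+\tau)|\le h$ and $\gamma(t+\tau)\in Q_{c_1 h/2}$, both points already sit in the translated cube $Q_{2h}(\gamma(t+\tau))$, which in turn is contained in $Q_{c_2 h}$ (this is exactly what the relations $c_1\ge 4$ and $c_2\ge 5c_1$ are arranged for). One may therefore apply Lemma~\ref{lem:subsol} with the small parameter $\bar\alpha=1$ rather than $\bar\alpha=c_1$, obtaining
\[
u(s,y)\le u(t+\tau,\gamma(t+\tau))+C\,\frac{h^q}{(t+\tau-s)^{q-1}}+C(t+\tau-s)\left(\fint_{(s,t+\tau)\times Q_{2h}(\gamma(t+\tau))}f^{r_1}\right)^{1/r_1}.
\]
The crucial feature is that the straight-line cost scales with $h$, not with $c_1 h$.

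Since $s\le t+\tau/2$ gives $t+\tau-s\ge\tau/2$, the upper bound in \eqref{ineq:htauxibis} yields $h^q/(t+\tau-s)^{q-1}\le Cc_1^{-q}\xi$. For the $f$-term I would follow the argument in the proof of \eqref{cas2bisbis}: compare the average on $(s,t+\tau)\times Q_{2h}(\gamma(t+\tau))$ with the average on $(t-\tau,t+\tau)\times Q_{c_2 h}$ (losing only a multiplicative constant depending on $c_1,c_2$), apply the maximal estimate \eqref{choixdet}, and use \eqref{khbezoerl} to replace $\tau$ by $K\lambda^{-p}\xi$; since $r_1>1$ makes the resulting exponent of $\lambda$ strictly negative, this contribution is bounded by $\vep\xi$ as soon as $\lambda_0$ is large enough.

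Combining these bounds with \eqref{back} gives
\[
u(s,y)\le u(t,x)-\frac{\xi}{2C}+Cc_1^{-q}\xi+\vep\xi,
\]
and it suffices to choose $c_1$ large (and then $\vep$ small and $\lambda_0$ large) so that $Cc_1^{-q}+\vep\le 1/(4C)$, which yields \eqref{utyBbis} with $C_0=4C$. The main subtlety is to guarantee that $C_0$ genuinely depends only on $d,p,r_1,r,\bar C$: this works precisely because the subtraction $\xi/(2C)$ in \eqref{back} carries a universal constant from Lemma~\ref{lem:supersol}, while all the remaining error terms can be made arbitrarily small a posteriori through the successive choices of $c_1,c_2,\kappa,\vep,\delta,\lambda_0$ described at the beginning of the section.
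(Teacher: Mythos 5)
Your proposal is correct and follows essentially the same route as the paper: apply Lemma \ref{lem:subsol} with $\bar\alpha=1$ from $(s,y)$ to $(t+\tau,\gamma(t+\tau))$, use \eqref{back} together with \eqref{ineq:htauxibis} to bound the straight-line cost by $Cc_1^{-q}\xi$, and absorb the $f$-term by choosing $c_1$ large and $\lambda_0$ large. The only cosmetic difference is that you re-derive the smallness of the $f$-contribution from \eqref{choixdet} and \eqref{khbezoerl}, whereas the paper simply enlarges the cube to $Q_{c_1h}$ and cites \eqref{cas2bisbis}; the substance is identical.
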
 

We need to keep track of the constant  in \eqref{utyBbis} for the proof of Lemma \ref{lem:pointcle} below. This is the reason why we single it out by the notation $C_0$. 
 
\begin{proof} If $(s,y)\in [t,t+\tau/2]\times B(\gamma(t+\tau), h)$, we have by Lemma \ref{lem:subsol} applied with $\bar \alpha=1$, 
$$
\begin{array}{rl}
u(s,y) \leq & \ds u(t+\tau, \gamma(t+\tau)) + C \frac{h^q}{\tau^{q-1}} + C\tau \left(\fint_{(s,t+\tau)\times Q_{2 h}} f^{r_1}\right)^{1/r_1} \\
\leq & \ds u(t+\tau, \gamma(t+\tau)) + C \frac{h^q}{\tau^{q-1}} + Cc_1^{d/r_1} \tau \left(\fint_{(t,t+\tau)\times Q_{ c_1 h}} f^{r_1}\right)^{1/r_1} \\
\leq & \ds u(t,x)-\frac{1}{2C} \xi + C c_1^{-q}\xi  + Cc_1^{d/r_1} \tau \left(\fint_{(t,t+\tau)\times Q_{ c_1 h}} f^{r_1}\right)^{1/r_1} 
\end{array}
$$
where we used  \rife{back} and inequality \eqref{ineq:htauxibis}. The last two terms now can be absorbed by choosing $c_1$ large enough and Lemma \ref{lem:cas2bis} respectively. So we obtain \rife{utyBbis} for some $C_0$ depending only on $d$, $p$, $r_1$, $r$ and $\bar C$.
\end{proof}

The next step is central in the proof. It consists in showing that the quantity $\xi$ controls below  the oscillation in space of $u$: 
\begin{Lemma}\label{lem:pointcle} For any $s \in [t, t+\tau/2]$ such that $Du(s)\in L^p(Q_{c_1 h})$, we have 
\be\label{pointcle}
 \xi \;  \leq \; Cc_1^{p(d-1)} \tau \left( \fint_{Q_{c_1 h}} |Du(s)|\right)^p.
\ee
\end{Lemma}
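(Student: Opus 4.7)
My plan is to extract a lower bound on $\fint_{Q_{c_1 h}}|Du(s,\cdot)|$ from the ``temporal drop'' $\xi$ along the characteristic $\gamma$. The idea is that such a drop forces $u(s,\cdot)$ to have a large spatial oscillation between the neighborhood of the origin and the neighborhood of $\gamma(t+\tau)$; this oscillation is then controlled by the $L^1$ norm of $Du(s,\cdot)$ via a Poincar\'e-type inequality, and the rescaling $c_1 h \lesssim (\xi\tau^{q-1})^{1/q}$ from Lemma~\ref{lemineq:htauxibis} produces the $p$-th power appearing in \eqref{pointcle}.

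The upper side of the oscillation is already furnished by Lemma~\ref{lem:utyBbis}: $u(s,y)\leq u(t,x)-C_0^{-1}\xi$ for every $y\in B(\gamma(t+\tau),h)$. For the matching lower side, I would invoke Lemma~\ref{lem:distrib}, which allows us to read \eqref{ineq:subsol} distributionally; in particular $\partial_t u + f \geq |Du|^p/\bar C \geq 0$ in $\mathcal D'$, so $u$ is nondecreasing in time up to the perturbation $\int f$. Testing this against $\mathbf 1_{(t,s)}(r)\,\mathbf 1_{Q_h}(z)/|Q_h|$ and using the very choice of $x$ gives
\[
\fint_{Q_h} u(s,z)\,dz \;\geq\; \fint_{Q_h} u(t,z)\,dz - \int_t^s \fint_{Q_h} f \;=\; u(t,x) - \int_t^s \fint_{Q_h} f.
\]
By H\"older's inequality and \eqref{cas2bisbis}, $\int_t^s\fint_{Q_h} f \leq C c_1^{d/r_1}\,\vep\,\xi$, which we absorb into $\xi/(2C_0)$ by taking $\vep$ small enough, depending on $c_1$ and $C_0$. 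Combining with Lemma~\ref{lem:utyBbis} we obtain the uniform gap
\[
\fint_{Q_h} u(s,z)\,dz - u(s,y) \;\geq\; \frac{\xi}{2C_0}\qquad \forall\, y\in B(\gamma(t+\tau),h).
\]

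To convert this gap into a gradient estimate, I would integrate $Du(s,\cdot)$ along the segments joining points of $Q_h$ to points of $B(\gamma(t+\tau),h)$. Averaging the above inequality also in $y \in B(\gamma(t+\tau),h)$ and using
\[
u(s,z) - u(s,y) \;=\; \int_0^1 Du\bigl(s,(1-\theta)y+\theta z\bigr)\cdot (z-y)\,d\theta,
\]
I would bound $|z-y|\leq C c_1 h$ (both points lie in $Q_{c_1 h}$) and perform the Fubini change of variables $y_\theta\to z$ for $\theta\in[0,1/2]$, $y_\theta\to y$ for $\theta\in[1/2,1]$. The tube swept by these segments is contained in $Q_{c_1 h}$ and has Lebesgue measure comparable with $c_1 h^d$ (the two endpoint regions have scale $h$ and are separated by a distance of order $c_1 h$), which yields
\[
\frac{\xi}{2C_0} \;\leq\; C\, c_1^{d}\, h\, \fint_{Q_{c_1 h}}|Du(s,\cdot)|.
\]
Substituting the bound $c_1 h \leq C(\xi\tau^{q-1})^{1/q}$ of Lemma~\ref{lemineq:htauxibis} to eliminate $h$ and raising to the $p$-th power then produces \eqref{pointcle}.

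The delicate point is to keep the $c_1$-dependence in the Poincar\'e step sharp: the naive route that passes through the average of $u(s,\cdot)$ on $Q_{c_1 h}$ loses one extra factor of $c_1$, which would weaken the exponent in \eqref{pointcle} from $p(d-1)$ to $pd$. The tight constant comes from exploiting the fact that $Q_h$ and $B(\gamma(t+\tau),h)$ have the same spatial scale $h$ while being separated by a length of order $c_1 h$, so that the tube joining them has volume of order $c_1 h^d$ rather than $c_1^d h^d$.
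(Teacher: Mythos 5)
Your reduction to a uniform gap is correct and parallels the paper: the upper bound for $u(s,\cdot)$ on $B(\gamma(t+\tau),h)$ is Lemma \ref{lem:utyBbis}, and the lower bound $\fint_{Q_h}u(s)\geq u(t,x)-\int_t^s\fint_{Q_h}f$ follows from the distributional form of \eqref{ineq:subsol} and the choice of $x$, with the error absorbed via \eqref{cas2bisbis} exactly as in the paper. The gap is in your last step, where the oscillation is converted into a bound on $\fint_{Q_{c_1h}}|Du(s)|$ by integrating along segments. Carrying out your Fubini change of variables honestly: for fixed $\theta\in[1/2,1]$ and $y$, the substitution $w=y+\theta(z-y)$ gives $\int_{Q_h}|Du(s,y+\theta(z-y))|\,dz\leq 2^d\int_T|Du(s,w)|\,dw$, where $T$ is the tube, so that
\[
\frac{\xi}{2C_0}\ \leq\ C\,c_1h\cdot\frac{1}{h^d}\int_T|Du(s)|\ \leq\ C\,c_1h\cdot\frac{(c_1h)^d}{h^d}\fint_{Q_{c_1h}}|Du(s)|\ =\ C\,c_1^{d+1}h\,\fint_{Q_{c_1h}}|Du(s)|,
\]
which, after \eqref{ineq:htauxibis}, yields $\xi\leq Cc_1^{pd}\tau\bigl(\fint_{Q_{c_1h}}|Du(s)|\bigr)^p$ and not \eqref{pointcle}. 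Your proposed rescue---that $|T|\sim c_1h^d$ rather than $c_1^dh^d$---does not work: the quantity in \eqref{pointcle} is the average of $|Du(s)|$ over the whole cube $Q_{c_1h}$, and there is no inequality of the type $\int_T|Du|\lesssim \frac{|T|}{|Q_{c_1h}|}\int_{Q_{c_1h}}|Du|$, since $|Du(s)|$ may perfectly well concentrate on the tube. So the smallness of $|T|$ buys nothing, and you lose exactly one power of $c_1$ at the very point you flag as delicate.

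The paper obtains the exponent $p(d-1)$ by a different device at this step: it bounds from below $\|u(s,\cdot)-\mu_s\|_{L^{d/(d-1)}(Q_{c_1h})}$, with $\mu_s=\fint_{Q_{c_1h}}u(s)$, by $C^{-1}\xi h^{d-1}$ (splitting into two cases according to whether $\mu_s$ is close to the high values on $Q_h$ or to the low values on $B(\gamma(t+\tau),h)$, the second case using the same integrated subsolution inequality you use), and then applies the scale-invariant Sobolev--Poincar\'e embedding $\|u(s,\cdot)-\mu_s\|_{L^{d/(d-1)}(Q_{c_1h})}\leq C\int_{Q_{c_1h}}|Du(s)|$. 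The gain of one power of $c_1$ comes precisely from the exponent $d/(d-1)$: the lower bound scales like $\xi h^{d-1}$ instead of $\xi h^d$, while the embedding constant does not see the size of the cube; any $L^1$-Poincar\'e or segment-integration argument gives at best $c_1^{pd}$. As a side remark, the weaker bound $c_1^{pd}$ would in fact still be serviceable later on (in Lemma \ref{lem:boundbelowtau} one would just take $\kappa$ smaller, $c_1$ being a fixed constant), but it does not prove the lemma as stated, and your argument as written does not justify the exponent $p(d-1)$.
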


\begin{proof}
Let $s\in [t, t+\tau/2]$ and set $\ds \mu_s:= \fint_{Q_{c_1 h}} u(s,y)dy$. Then, by \eqref{utyBbis}, 
$$
\begin{array}{l}
\ds \int_{Q_{c_1 h}} |u(s,y)- \mu_s|^{d/(d-1)}dy \\
\qquad  \geq  \ds \max\left\{ \int_{B(\gamma(t+\tau), h)} (\mu_s - u(s,y))_+^{d/(d-1)} dy\ ; \ 
\int_{Q_h} (u(s,y) -\mu_s)_+^{d/(d-1)}dy \right\}\\
\qquad \geq   \ds \max\left\{ \int_{B(\gamma(t+\tau), h)} (\mu_s - u(t,x)+ C_0^{-1} \xi)_+^{d/(d-1)}\ ; \ h^d
(\fint_{Q_h}  u(s) -\mu_s )_+^{d/(d-1)}\right\}
\end{array}
$$
We now consider two cases, according to whether $\mu_s$ is small or not.  

If $\ds \fint_{Q_h}  u(s) -\mu_s\geq (3C_0)^{-1} \xi$, then clearly
 $$
h^d
(\fint_{Q_h}  u(s) -\mu_s )_+^{d/(d-1)} \geq (3C_0)^{-d/(d-1)} \xi^{d/(d-1)}h^d.
$$
We now suppose that $\ds \mu_s > \fint_{Q_h}  u(s)-(3C_0)^{-1} \xi$. By \eqref{ineq:subsol}  we have
$$
\begin{array}{rl}
\ds \fint_{Q_h}  u(s,y)dy \; \geq & \ds  \fint_{Q_h}  u(t,y)dy - \int_t^s \fint_{Q_h} f \geq u(t,x) - Cc_1^{d/r_1} \tau \left(\fint_{t}^{t+\tau}\fint_{Q_{c_1 h}} f^{r_1}\right)^{1/r_1} 
\end{array}
$$
where we used that $\ds u(t,x)= \fint_{Q_h}  u(t)$. Hence 
$$
\mu_s > \fint_{Q_h}  u(s)-(3C_0)^{-1} \xi \geq u(t,x) - (3C_0)^{-1}\xi- Cc_1^{d/r_1} \tau \left(\fint_{t}^{t+\tau}\fint_{Q_{c_1 h}} f^{r_1}\right)^{1/r_1}.
$$
By Lemma \ref{lem:cas2bis}, last term can be made arbitrarily small compared to $\xi$; so, in this second case we also get
$$
\begin{array}{l}
 \ds \int_{B(\gamma(t+\tau), h)} (\mu_s - u(t,x)+ C_0^{-1} \xi)_+^{d/(d-1)}
\geq   \ds (CC_0)^{-d/(d-1)} \xi^{d/(d-1)} h^d.
\end{array}
$$
Combining the two cases, we deduce 
$$
\ds \int_{Q_{c_1 h}} |u(s,y)- \mu_s|^{d/(d-1)}dy \; \geq \; C^{-1}(C_0)^{-d/(d-1)} \xi^{d/(d-1)} h^d,
$$
which can be rewritten as
 $$
 \left\| u(s,\cdot)- \mu_s\right\|_{L^{d/(d-1)}(Q_{c_1 h})}  \geq C^{-1}(C_0)^{-1} \xi h^{d-1}\; \geq \; C^{-1} h^d c_1 \xi^{1/p}\tau^{-1/p}, 
$$
where we have used \eqref{ineq:htauxibis} for the last inequality and we no longer need to keep track of the notation $C_0$.  We now bound above the left-hand side of the above inequality. Indeed, for any $s$ such that $Du(s)\in L^p$, by Poincaré inequality we have 
$$
\left\| u(s,\cdot)- \mu_s\right\|_{L^{d/(d-1)}(Q_{c_1 h})}  \leq C \int_{Q_{c_1 h}} |Du(s)|.
$$
Hence 
$$
\xi \leq C c_1^{p(d-1)}\tau \left( \fint_{Q_{c_1 h}} |Du(s)|\right)^p.
$$
\end{proof}

A first consequence of Lemma \ref{lem:pointcle} is a bound from below of $\tau$: 

\begin{Lemma} \label{lem:boundbelowtau} We have 
\be\label{boundbelowtau}
\tau \geq 3\sigma h.
\ee
In particular, 
\be\label{boundbelowtaubis}
t\leq -\sigma h /2\leq \sigma h /2\leq t+ \tau/2  
\ee
and 
\be\label{boundbelowtauter} 
t+\tau -\sigma h /2\geq \tau/2.
\ee
\end{Lemma}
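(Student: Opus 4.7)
The plan is to combine the upper bound on $\xi$ coming from Lemma \ref{lem:pointcle} (applied at the time $s=t$ itself) with the lower bound from Lemma \ref{lemineq:htauxibis}, and then rearrange to extract a lower bound on $\tau$, finally adjusting $\kappa$.

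First I would apply Lemma \ref{lem:pointcle} with the choice $s=t$. This is legal because $t$ was selected in Lemma \ref{lem:choixdet} as a Lebesgue point of $s\mapsto \fint_{Q_{c_1 h}}|Du(s)|^p$ satisfying $\fint_{Q_{c_1 h}}|Du(t)|^p\le C c_2^{d+1}\lambda^p$, and (by Fubini applied to $Du\in L^p((0,1)\times Q_1)$, which is guaranteed by Lemma \ref{lem:distrib}) one may in fact choose $t$ to also have $Du(t)\in L^p(Q_{c_1 h})$. Combining with H\"older's inequality then yields
\[
\xi \,\le\, C c_1^{p(d-1)} \tau \left(\fint_{Q_{c_1 h}}|Du(t)|\right)^{\!p}
\,\le\, C c_1^{p(d-1)}\tau \fint_{Q_{c_1 h}}|Du(t)|^p
\,\le\, C c_1^{p(d-1)} c_2^{d+1}\,\tau\,\lambda^p.
\]

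Next I would confront this with the lower bound in \eqref{ineq:htauxibis}, giving
\[
\frac{(c_1 h)^q}{\tau^{q-1}} \,\le\, C\,\xi \,\le\, C c_1^{p(d-1)} c_2^{d+1}\,\tau\,\lambda^p,
\]
hence, using $p/q=p-1$,
\[
\tau^q \,\ge\, \frac{(c_1 h)^q}{C c_1^{p(d-1)} c_2^{d+1}\,\lambda^p}
\qquad\Longrightarrow\qquad
\tau \,\ge\, \frac{c_1^{\,1-(p-1)(d-1)}}{\bigl(C c_2^{d+1}\bigr)^{1/q}}\;\frac{h}{\lambda^{p-1}}
\;=\;\frac{c_1^{\,1-(p-1)(d-1)}}{\kappa\bigl(C c_2^{d+1}\bigr)^{1/q}}\;\sigma h,
\]
where I have used $\sigma=\kappa\lambda^{1-p}$. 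Since in the choice of parameters $\kappa$ is fixed \emph{after} $c_1$ and $c_2$, I would now select $\kappa$ small enough, as a function of $c_1,c_2,p,d,\bar C$, so that the prefactor exceeds $3$; this yields \eqref{boundbelowtau}.

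Finally, \eqref{boundbelowtaubis} and \eqref{boundbelowtauter} are immediate arithmetic consequences of \eqref{boundbelowtau}: from Lemma \ref{lem:choixdet} one has $t\in(-\sigma h,-\sigma h/2)$, so in particular $t\le-\sigma h/2$, and
\[
t+\frac{\tau}{2}\;\ge\; -\sigma h+\frac{3\sigma h}{2}\;=\;\frac{\sigma h}{2},
\]
which gives both \eqref{boundbelowtaubis} and (rewritten as $t+\tau-\sigma h/2\ge \tau/2$) \eqref{boundbelowtauter}.

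I do not expect any genuine obstacle here: the main task is simply bookkeeping the exponents of $c_1,c_2,\kappa,\lambda$ correctly to verify that the prefactor can indeed be forced above $3$ by shrinking $\kappa$. The only delicate point worth emphasizing is the justification that Lemma \ref{lem:pointcle} may be applied at the endpoint $s=t$, which, as explained above, is built into the selection of $t$ via the Lebesgue-point property in Lemma \ref{lem:choixdet}.
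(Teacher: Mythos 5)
Your argument is correct and is essentially the paper's own proof: apply Lemma \ref{lem:pointcle} at $s=t$ (legitimate by the Lebesgue-point selection of $t$ in Lemma \ref{lem:choixdet}), use Jensen's inequality and the first bound in \eqref{choixdet}, combine with the lower bound in \eqref{ineq:htauxibis}, and conclude by taking $\kappa$ small depending on $c_1,c_2$. The derivation of \eqref{boundbelowtaubis} and \eqref{boundbelowtauter} from $t\in(-\sigma h,-\sigma h/2)$ and $\tau\geq 3\sigma h$ is also exactly as intended.
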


\begin{proof} Recall that $t$, chosen as in Lemma \ref{lem:choixdet},  is a Lebesgue point of $s\mapsto \int_{Q_{c_1 h}} |Du(s)|^p$.  Using Jensen inequality and the first inequality in \eqref{choixdet}, we have 
$$
\left( \fint_{Q_{c_1 h}} |Du(t)|\right)^p \leq  \fint_{Q_{c_1 h}} |Du(t)|^p\leq Cc_2^{d+1} \lambda^p. 
$$
Combining \eqref{ineq:htauxibis} with \eqref{pointcle} then gives
$$
(c_1 h)^q \tau^{1-q}\leq C  \xi \leq Cc_1^{p(d-1)}  \tau \left( \fint_{Q_{c_1 h}} |Du(t)|\right)^p \leq  Cc_1^{p(d-1)} c_2^{d+1}  \tau \lambda^p.
$$
So 
$$
\tau\geq C^{-1}  c_1^{1- p(d-1)/q} c_2^{-(d+1)/q} \lambda^{1-p} h =  C^{-1}  c_1^{1- p(d-1)/q} c_2^{-(d+1)/q} \kappa^{-1}\sigma h \geq 3 \sigma h
$$
for $\kappa$ sufficiently small (depending on $c_1, c_2$). 
\end{proof}

%
%

By familiar argument we can also bound below $\xi$ by the variation in time of $u$: 

\begin{Lemma}\label{lem:intutauu0bis} We have 
\be\label{intutauu0bis}
\fint_{Q_h} (u(\sigma h/2,y)-u(-\sigma h/2,y))dy \leq  C \xi.
\ee
\end{Lemma}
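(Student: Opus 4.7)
The plan is to use the generalized characteristic $\gamma$ exactly as announced in the narrative preceding Lemma \ref{lem:pointcle}: since $\gamma$ links a point $(t,x)$ with $t<-\sigma h/2$ to a point $(t+\tau,\gamma(t+\tau))$ with $t+\tau>\sigma h/2$ (by \eqref{boundbelowtaubis}), it serves as a bridge between $u(-\sigma h/2,\cdot)$ and $u(\sigma h/2,\cdot)$. Concretely, for each $y\in Q_h$ I would apply Lemma \ref{lem:subsol} with $\bar\alpha=c_1$ on the interval $[\sigma h/2,t+\tau]$; this is legitimate because $\sigma h/2<t+\tau$ by \eqref{boundbelowtaubis} and both $y$ and $\gamma(t+\tau)$ lie in $Q_{c_1h}$ (the latter by the very definition of $\tau$). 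The resulting upper bound on $u(\sigma h/2,y)$ is $u(t+\tau,\gamma(t+\tau))+E_1+E_2$, where $E_1=C(c_1h)^q/(t+\tau-\sigma h/2)^{q-1}$ is a ``geometric'' error and $E_2$ is an $f$-term. Thanks to $t+\tau-\sigma h/2\geq\tau/2$ from \eqref{boundbelowtauter} and the right-hand side of \eqref{ineq:htauxibis}, $E_1\leq C\xi$ at once; for $E_2$ one recasts the $f$-average on $Q_{c_2h}$ via \eqref{choixdet} and then, via \eqref{khbezoerl}, shows that $E_2\leq C\tau\lambda^{p/r_1}\leq C\xi\lambda^{p/r_1-p}\leq\vep\xi$ provided $\lambda_0$ is chosen large enough.

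Next I would use the already-refined characteristic inequality \eqref{back} to replace $u(t+\tau,\gamma(t+\tau))$ by $u(t,x)-\tfrac{1}{2C}\xi$, and the choice $u(t,x)=\fint_{Q_h}u(t,\cdot)$ to average the whole estimate in $y$. The last step is to replace $\fint_{Q_h}u(t,\cdot)$ by $\fint_{Q_h}u(-\sigma h/2,\cdot)$: by Lemma \ref{lem:distrib}, \eqref{ineq:subsol} holds in the distributional sense, so $\partial_t u\geq -f$ weakly, and integrating on $(t,-\sigma h/2)\times Q_h$ costs only $\int_t^{-\sigma h/2}\fint_{Q_h}f$, which by H\"older and \eqref{keyhyp} is bounded by $C\sigma h\lambda^{p/r_1}$. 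A direct comparison with the lower bound $\xi\geq C^{-1}c_1^qc_2^{1-q}\kappa^{1-q}\lambda h$, extracted from \eqref{ineq:htauxibis} together with $\tau\leq c_2\sigma h$ and $\sigma=\kappa\lambda^{1-p}$, then shows this remainder is also $\leq\vep\xi$ for $\lambda_0$ large, since $p/r_1-p<0$.

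Chaining these three steps produces \eqref{intutauu0bis} with a constant $C$ independent of $\la$ and $h$ (the gain $-\xi/(2C)$ from the characteristic is not even needed; it is simply absorbed against the positive $C\xi$ coming from $E_1$). The only delicate point, and the recurring obstacle throughout this section, is the bookkeeping of the $f$-integrals, which live on nested cubes of different sizes ($Q_h$, $Q_{c_1h}$, $Q_{2c_1h}$, $Q_{c_2h}$, $Q''$): one must enlarge each of them to one on which \eqref{keyhyp} or \eqref{choixdet} applies directly, then check that every remaining power of $\lambda$ is strictly negative so that $\lambda\geq\lambda_0$ does the absorption. No genuinely new idea beyond those already developed in Lemmata \ref{lem:cas2bis} and \ref{lem:utyBbis} is required.
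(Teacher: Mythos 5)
Your argument is correct and follows essentially the same route as the paper's proof: apply Lemma \ref{lem:subsol} from $(\sigma h/2,y)$ up to $(t+\tau,\gamma(t+\tau))$, use \eqref{back} together with the choice $u(t,x)=\fint_{Q_h}u(t)$, and then pass from $\fint_{Q_h}u(t)$ to $\fint_{Q_h}u(-\sigma h/2)$ via the distributional form of \eqref{ineq:subsol}, the only (cosmetic) difference being that the paper absorbs the $f$-remainders by invoking \eqref{cas2bisbis} while you redo the $\lambda$-power absorption by hand. One small slip: the bound $E_1\leq C\xi$ uses the \emph{left} inequality in \eqref{ineq:htauxibis}, i.e.\ $(c_1h)^q/\tau^{q-1}\leq C\xi$, not the right-hand one you cite.
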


\begin{proof}
Since, by \eqref{boundbelowtauter}, $\ds t+\tau -\sigma h /2\geq \tau/2$, we have by Lemma \ref{lem:subsol}, for any $y\in Q_h$, 
$$
\begin{array}{rl}
\ds u(\sigma h/2,y) \leq & \ds u(t+\tau, \gamma(t+\tau)) + C \frac{(c_1 h)^q}{(\tau/2)^{q-1}} +  C\tau\left(\fint_{(\sigma h/2,t+\tau)\times Q_{c_1 h}} f^{r_1}\right)^{1/r_1} \\
\ds \leq & \ds  u(t, x)-C^{-1}\xi  + C \frac{(c_1 h)^q}{\tau^{q-1}} + C\tau\left(\fint_{(\sigma h/2,t+\tau)\times Q_{c_1 h}} f^{r_1}\right)^{1/r_1}\; \leq \; \ds \fint_{Q_h}u(t)  + C \xi, 
\end{array}
$$
thanks again to \rife{back}, \eqref{cas2bisbis} and \eqref{ineq:htauxibis}. Next we  estimate $\ds \fint_{Q_h}u(t)$ by $\ds \fint_{Q_h}u(-\sigma h/2)$. Integrating \eqref{ineq:subsol} over $(t, -\sigma h/2)\times Q_h$ and using again \eqref{cas2bisbis}, we get
$$
\begin{array}{rl}
\ds \fint_{Q_h}u(t) \; \leq & \ds  \fint_{Q_h}u(-\sigma h/2) +  C\int_t^{-\sigma h/2} \fint_{Q_h} f \\ 
\leq & \ds   \fint_{Q_h}u(-\sigma h/2) +  Cc_1^{d/r_1} \tau \left(\fint_{(t,t+\tau)\times Q_h} f^{r_1}\right)^{1/r_1} \; \leq \;   \fint_{Q_h}u(-\sigma h/2) + C\xi. 
\end{array}
$$
So, for any $y\in Q_h$, 
$$
u(\sigma h/2,y) \leq \fint_{Q_h}u(-\sigma h/2) + C\xi. 
$$
Integrating over $y\in Q_h$ gives  \eqref{intutauu0bis}. 
\end{proof}

We are now ready to prove our main estimate: 

\begin{proof}[Proof of Proposition \ref{lem:key}.]
Combining Lemmata  \ref{lem:intutauu0bis}, \ref{lem:pointcle} and \ref{lem:cas2bis}, we have
$$
\frac{1}{\sigma h}\fint_{Q_h} (u(\sigma h/2,y)-u(-\sigma h/2,y))dy  \leq  C \xi (\sigma h)^{-1} \leq  Cc_2 c_1^{p(d-1)} \left( \fint_{Q_{c_1 h}} |Du(s)|\right)^p
$$
for a.e. $s\in [t, t+\tau/2]$. Since by \eqref{boundbelowtaubis}, $t\leq -\sigma h /2\leq \sigma h /2\leq t+ \tau/2$, we get 
$$
\begin{array}{rl}
\ds \frac{1}{\sigma h}\fint_{Q_h} (u(\sigma h/2,y)-u(-\sigma h/2,y))dy   \;  \leq  & \ds Cc_2 c_1^{p(d-1)}  \essinf_{s\in (-\sigma h/2,\sigma h/2)} \left(\fint_{Q_{c_1 h}} |Du(s)|\right)^p\\
\leq & \ds  Cc_2 c_1^{pd}  \left(\fint_{Q'} |Du|\right)^p
\end{array}
$$
Integrating inequality \eqref{ineq:subsol} on $Q$ gives
$$
\begin{array}{rl}
\ds \fint_{Q} |Du|^p \; \leq & \ds C \fint_{Q_h} \frac{u(\sigma h/2)- u(-\sigma h/2)}{\sigma h} + C \fint_{Q} f\\ 
\leq & \ds  
Cc_2 c_1^{pd}  \left(\fint_{Q'} |Du|\right)^p  + Cc_1^{d+1}\fint_{Q'} f^{r_1} .
\end{array}
$$
Recalling \eqref{keyhyp} and \eqref{casetotreat} finally gives \eqref{keycl}.
\end{proof}

\section{Proof of Theorem \ref{th:main}}

Throughout this part $u: [-1/2,1/2]\times Q_1\to \R$ is a continuous map which satisfies  \eqref{ineq:subsol} and  \eqref{ineq:supersol} in  $[-1/2,1/2]\times Q_1$.  In view of Lemma \ref{lem:distrib}, we already know that $u$ is in BV and $Du$ is in $L^p$. Our aim is now to show the higher integrability of $Du$, the fact that the measure  $\partial_tu$ is absolutely continuous and in $L^{1+\vep}$ for some $\vep>0$. Recall that $p>1$ and  $r>1+d/p$ are given and that $q$ is the conjugate exponent of $p$: $1/p+1/q=1$. 

Let us start with the  higher integrability of $Du$: 

\begin{Proposition}\label{prop:Dup+ep} There exists $\vep_0>0$ depending only on $d$, $p$, $r$ and $\bar C$ and a constant $M$, depending on $d$, $p$, $r$ and $\bar C$, $\|u\|_\infty$ and $\|f\|_r$, such that 
$$
\int_{Q_{1/2,1/2}}|Du|^{p(1+\vep_0)}\leq M.
$$
\end{Proposition}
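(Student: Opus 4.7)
The plan is to deduce Proposition~\ref{prop:Dup+ep} from the intrinsic reverse H\"older inequality of Proposition~\ref{lem:key} through a self-improving argument \`a la Gehring, adapted to the intrinsic scaling exactly as in Kinnunen--Lewis \cite{KiLe}. I set $g:=|Du|^p+f^{r_1}$, which by Lemma~\ref{lem:distrib} belongs to $L^1$ on $Q_{1,1}$, and I work on a slightly smaller cube, say $Q_{3/4,3/4}$, to leave room for the enlarged cubes $Q'\subset Q''$ used in Proposition~\ref{lem:key}. The goal is to show that $g$ lies in $L^{1+\vep_0}$ locally, which immediately gives the conclusion since $|Du|^p\le g$.

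The first step is a Calder\'on--Zygmund type stopping-time construction using intrinsic cubes. For $\lambda\ge \lambda_0$ (with $\lambda_0$ fixed later, large enough to absorb the background $L^1$ norm of $g$), define the superlevel set $E(\lambda):=\{(t,x)\in Q_{1/2,1/2}:g(t,x)>\lambda^p\}$. For each Lebesgue point $(t_0,x_0)\in E(\lambda)$, consider the intrinsic cubes $Q_{\sigma h, h}(t_0,x_0)$ with $\sigma=\kappa\lambda^{1-p}$. For $h$ very small the average of $g$ exceeds $\lambda^p$, while for $h$ of order $1$ the average is dwarfed by $\|g\|_1$, hence by continuity of the average in $h$ one can select, for each point, the \emph{largest} $h$ for which the average on $Q_{\sigma h,h}$ equals $\lambda^p$. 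This automatically produces a cube on which the first equality in \eqref{keyhyp} holds and the average on all enlargements $Q_{c_2\sigma h, c_2 h}$ is bounded above by $\lambda^p$ (since we picked the \emph{maximal} $h$), yielding the third inequality of \eqref{keyhyp}; the middle one is just monotonicity of the integral. A Vitali covering argument then produces a disjoint family of such cubes $\{Q_j\}$ whose fivefold enlargements cover $E(\lambda)$.

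On each $Q_j$ the hypothesis \eqref{keyhyp} is satisfied, so Proposition~\ref{lem:key} yields
\[
\fint_{Q_j''}|Du|^p \le \hat C\Bigl(\fint_{Q_j'}|Du|\Bigr)^p + \hat C \fint_{Q_j'}(1+f^{r_1}).
\]
The first term on the right is a sub-one power of the average of $|Du|^p$, thanks to Jensen's inequality, which is the essence of the reverse H\"older phenomenon. Splitting $Q_j'$ according to whether $|Du|\le\eta\lambda$ or $|Du|>\eta\lambda$ for a small $\eta$, one writes $\bigl(\fint_{Q_j'}|Du|\bigr)^p$ as a sum, absorbs the low-value part into $\lambda^p$, and estimates the high-value part by a fractional average of $|Du|^p\mathbf{1}_{\{|Du|>\eta\lambda\}}$. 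Summing over $j$ and comparing with the measure of $E(\lambda)$, one arrives at the standard good-$\lambda$ inequality
\[
\int_{E(A\lambda)} g \,\le\, \theta \int_{E(\eta\lambda)} g + C\int_{\{f^{r_1}>c\lambda^p\}} f^{r_1},
\]
with $\theta<1$ and constants $A,\eta,c$ depending only on $d,p,r_1,r,\bar C$; the right-hand side also carries a harmless $\lambda^p|Q_{1/2,1/2}|$ coming from the $+1$ term in \eqref{keycl}.

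The conclusion is then classical. Multiplying the good-$\lambda$ inequality by $\lambda^{p\vep-1}$ and integrating in $\lambda$ on $(\lambda_0,\infty)$, the factor $\theta<1$ lets one absorb the superlevel integral of $g$ into the left-hand side for $\vep>0$ small enough (this is the standard Gehring self-improvement), while the $f^{r_1}$ term gives a finite contribution because $r>1+d/p$ ensures $f^{r_1(1+\vep_0)}\in L^1$ for small $\vep_0$, using $f\in L^r$. The cut-off at $\lambda_0$ is controlled by the a priori $L^p$ bound on $Du$ and the $L^r$ bound on $f$, both of which depend only on $\|u\|_\infty$ and $\|f\|_r$ (the $L^p$ bound of $Du$ coming from Lemma~\ref{lem:distrib}). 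I expect the main technical obstacle to be the Vitali covering with intrinsic cubes and the verification that the maximal-$h$ selection actually furnishes cubes satisfying the full chain \eqref{keyhyp}; the subtlety is that the intrinsic shape of the cubes depends on $\lambda$, so one must ensure that the enlargements $Q_j''$ still lie inside $Q_{1,1}$, which forces the geometric restriction to $Q_{1/2,1/2}$ and a lower bound $\lambda_0$ chosen large enough.
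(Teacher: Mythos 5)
Your proposal is correct and follows essentially the same route as the paper: an intrinsic stopping-time/Calder\'on--Zygmund selection of cubes $Q_{\sigma h,h}$ (largest $h$ with average equal to $\lambda^p$, the large-$h$ average being small by the global $L^1$ bound and $\lambda\ge\lambda_0$), a Vitali covering with the $Q'$ disjoint and the $Q''$ covering $E(\lambda)$, Proposition \ref{lem:key} on each cube, the $\eta\lambda$-splitting leading to the level-set (good-$\lambda$) inequality, and the standard Gehring self-improvement, with $r_1<r$ handling the $f^{r_1}$ term and $\|Du\|_p$ controlled via $\|u\|_\infty$ and $\|f\|_r$. This is precisely the adaptation of Kinnunen--Lewis that the paper carries out.
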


The proof of the Proposition follows from Proposition  \ref{lem:key} and arguments developed by Kinnunen and Lewis of \cite{KiLe}. Actually, we just reproduce here---for the sake of completeness---the proof of Proposition 4.1 of \cite{KiLe} which explains why Proposition \ref{lem:key} implies the higher integrability of $Du$.

\begin{proof} To simplify the notation, we assume as before that $f\geq 1$. Fix $r_1\in (1+d/p,r)$ and let  $\lambda_0$, $\kappa$, $c_1$, $c_2$ and $\hat C$ be as in Proposition \ref{lem:key}. For  $\lambda\geq \lambda_0$ we set $\sigma= \kappa \lambda^{1-p}$ and define 
$$
E(\lambda)=\{ (t,x)\in Q_{1/2,1/2}\;, \; |Du(t,x)|^p+f^{r_1}(t,x)>\lambda^p\}.
$$ 
We can (and will) assume without loss of generality that $\sigma<1$. 
Let $(t,x)\in E(\lambda)$ be a Lebesgue point of $|Du|^p+ f^{r_1}$. We first claim that there exists $h_{t,x}\in (0, 1/(4c_2))$ such that 
\be\label{hbreslbkj}
\lambda^p = \fint_{Q_{\sigma h_{t,x}, h_{t,x}}(t,x)}(|Du|^p+ f^{r_1}) \leq c_2^{d+1}  \fint_{Q_{ c_2 \sigma h_{t,x},c_2  h_{t,x}}(t,x)}(|Du|^p+ f^{r_1})\leq c_2^{d+1} \lambda^p.
\ee
Indeed, we note that, by Lebesgue Differentiation Theorem, 
$$
\lim_{h\to 0^+} \fint_{Q_{\sigma h, h}(t,x)}(|Du|^p+ f^{r_1})= |Du(t,x)|^p+f^{r_1}(t,x)>\lambda^p. 
$$
On the other hand, 
$$
\begin{array}{rl}
\ds \fint_{Q_{\sigma /(4c_2), 1 /(4c_2)}(t,x)}(|Du|^p+ f^{r_1}) \;  \leq & \ds Cc_2^{d+1}\sigma^{-1} \int_{Q_{1,1}}(|Du|^p+ f^{r_1}) \\
\leq& \ds Cc_2^{d+1}\kappa^{-1}\lambda^{p-1} \int_{Q_{1,1}}(|Du|^p+ f^{r_1})\; <\;  \lambda^p, 
\end{array}
$$
since $\lambda\geq \lambda_0$ and $\lambda_0$ is large enough, depending on  $\|Du\|_p$ and  $\|f\|_r$. As $\ds h\to \fint_{Q_{\sigma h,  h}}(|Du|^p+ f^{r_1})$ is continuous, there is a largest real number  $h_{t,x} \in (0,1/(4c_2))$ for which  equality 
$\ds
\lambda^p = \fint_{Q_{\sigma h_{t,x}, h_{t,x}}}(|Du|^p+ f^{r_1})
$
holds.  Since $c_2 h_{t,x}$ is a larger value than $h_{t,x}$, this in particular implies the  right-hand side inequality in \eqref{hbreslbkj}. \\

By  Proposition \ref{lem:key} and the previous argument, we obtain that,  for almost every $(t,x)\in E(\lambda)$ there exists $h_{t,x}\in (0,1/2)$ such that \eqref{hbreslbkj} holds, which implies that
\be\label{khbjhil}
\fint_{Q_{c_2\sigma  h_{t,x}, c_2  h_{t,x}}}|Du|^p \leq \hat C \left(\fint_{Q_{c_1 \sigma h_{t,x}, c_1   h_{t,x}}} |Du|\right)^p+  \hat C \fint_{Q_{c_1\sigma  h_{t,x}, c_1   h_{t,x}}} f^{r_1}. 
\ee
Since $5c_1\leq c_2$, Vitali Covering Theorem yields the existence of an enumerable family of cubes $(Q_i'':= Q_{c_2 \sigma  h_{t_i,x_i}, c_2  h_{t_i,x_i}})$ such that the $(Q_i':= Q_{c_1 \sigma h_{t_i,x_i}, c_1 h_{t_i,x_i}})$ have an empty intersection and the $(Q''_i)$ cover $E(\lambda)$. 
 
Let us set $g= (|Du|^p+f^{r_1})^{1/p}$.  We denote henceforth by $C$ any generic constant, depending only on $d$, $p$, $r$ and $\bar C$, and in particular independent of $\la$. Combining \eqref{hbreslbkj} and \eqref{khbjhil} we get 
\be\label{lkhjdQCLS}
\ds C^{-1}\lambda^p \; \leq  \ds  \fint_{Q''_i}g^p  \leq C \left(\fint_{Q'_i} g\right)^p+ C \fint_{Q_i'} f^{r_1}\\
 \leq  \ds C^2\fint_{Q_i''}g^p \leq C^3\lambda^p. 
\ee
For $\eta\in (0,1)$ small to be chosen later, we have 
$$
\begin{array}{rl}
\ds \left(\fint_{Q'_i} g\right)^p  \; \leq & \ds C\eta^p\lambda^p + C \left(|Q'_i|^{-1}\int_{Q'_i\cap E(\eta \lambda)} g\right)^p\\
\leq & \ds 
C\eta^p\lambda^p+  C\left(\fint_{Q'_i} g^p\right)^{(p-1)/p}|Q'_i|^{-1}\int_{Q'_i\cap E(\eta \lambda)} g \\ 
\leq &\ds  C\eta^p\lambda^p+  C\lambda^{p-1}|Q'_i|^{-1}\int_{Q'_i\cap E(\eta \lambda)} g
\end{array}
$$
while, in the same way,  
$$
\fint_{Q_i'}f^{r_1} \leq C\eta^p\lambda^p + |Q'_i|^{-1} \int_{Q'_i\cap E(\eta \lambda)} f^{r_1}.
$$
We can combine the above inequalities and obtain  by \eqref{lkhjdQCLS} 
$$
C^{-1}\lambda^p\leq  \fint_{Q''_i}g^p\leq C\eta^p\lambda^p+ C |Q'_i|^{-1}\int_{Q'_i\cap E(\eta \lambda)} (\lambda^{p-1} g + f^{r_1}).
 $$
For  $\eta>0$ small enough (depending only on $d$, $p$, $r$ and $\bar C$) one can absorb the term $C\eta^p\lambda^p$ into the left-hand side and get
 $$
 \int_{Q''_i}g^p\leq C \int_{Q'_i\cap E(\eta \lambda)} (\lambda^{p-1} g + f^{r_1}).
 $$
Since $\ds E(\lambda)\subset \bigcup_i Q_i''$ and the $(Q_i')$ have an empty intersection, we obtain by summing up over $i$: 
\be\label{ifhzoekhb}
 \int_{E(\lambda)}g^p\leq C \int_{E(\eta \lambda)} (\lambda^{p-1} g + f^{r_1})\qquad \forall \lambda\geq \lambda_0.
 \ee
By standard argument, which can be found in  \cite{KiLe}, one deduces the existence of $\vep_0$ and $M$ such that 
$$
\int_{Q_{1/2,1/2}} g^{p(1+\vep_0)} \leq M. 
$$ 
Since $r_1<r$, the higher integrability for $g$ implies the higher integrability for the single term $|Du|^p$.
Note that the improved integrability exponent $p(1+\vep_0)$ depends on the constants $C$, $\eta$ and $p$ in inequality \eqref{ifhzoekhb}, hence only on $d$, $p$, $r$ and $\bar C$. On the other hand, the norm $\|Du\|_{p(1+\vep_0)}$ also depends on $\lambda_0$ and $\|f\|_{r_1}$, i.e., on $d$, $p$, $r$ and $\bar C$, $\|Du\|_p$ and $\|f\|_r$. Integrating inequality \eqref{ineq:subsol} over $Q_{1,1}$ shows that $\|Du\|_p$ is bounded above by $\|u\|_\infty$ and  $\|f\|_r$, whence the conclusion. 
\end{proof}

Next we show  that $u\in W^{1,1}_{loc}$ and that $\partial_tu$ belongs to $L^{1+\vep}_{loc}$ for $\vep>0$ small enough. Our starting point is a weak integral form of \eqref{ineq:supersol}: 

\begin{Lemma}\label{lem:estiu-u} Fix $r_1\in (1+d/p,r)$. There exists $\bar c, C\geq 2$, depending only on $d$, $p$, $r_1$, $r$ and $\bar C$,  such that, for any $(t,x)\in Q_{1/2,1/2}$ and any $h\in (0, 1/(2 \bar c))$,
\be\label{tictac}
\partial_tu(Q_{h,h}(t,x)) \leq C\int_{Q_{\bar c h,\bar c h}(t,x)} (|Du|^p+ 1+\max_{Q_{\bar c h,\bar c h}(t,x)} f^{r_1}).
\ee
\end{Lemma}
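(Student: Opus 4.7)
$u$ is continuous by assumption and Lemma~\ref{lem:distrib} ensures that $u$ is of bounded variation in $(0,1)\times Q_1$ with $Du\in L^p$. Consequently the measure $\partial_t u$ applied to the cylinder $Q_{h,h}(t,x)$ coincides with the spatial integral of time differences,
\[ \partial_t u(Q_{h,h}(t,x)) = \int_{Q_h(x)}[u(t+h/2,y)-u(t-h/2,y)]\,dy, \]
and it suffices to estimate the integrand for $y\in Q_h(x)$. The plan is to combine the two one-sided estimates of Section~\ref{sec:prem}: for each $y$, Lemma~\ref{lem:supersol} yields a generalized characteristic $\gamma_y$ starting at $(t-h/2,y)$ along which $u$ decays at a rate controlled by the kinetic energy $\int|\dot\gamma_y|^q$, while Lemma~\ref{lem:subsol} provides a forward upper bound on $u(t+h/2,y)$ in terms of its value at a later space-time point. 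Chaining these two bounds through a common intermediate point is how I will estimate $u(t+h/2,y)-u(t-h/2,y)$.

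Fixing a large constant $\bar c$ (to be chosen at the end in terms of $d,p,r_1,r,\bar C$) and setting $s^*:=t+3h/2$, I first treat the \emph{non-escape} case in which $\gamma_y$ stays in $Q_{\bar c h/2}(x)$ throughout $[t-h/2,s^*]$. Applying Lemma~\ref{lem:subsol} with $\bar\alpha=\bar c/2$ between the two points $(t+h/2,y)$ and $(s^*,\gamma_y(s^*))$ (both in $Q_{\bar c h/2}(x)$), and Lemma~\ref{lem:supersol} between $(t-h/2,y)$ and $(s^*,\gamma_y(s^*))$, the intermediate value $u(s^*,\gamma_y(s^*))$ cancels and, after discarding the favourable term $-\frac{1}{C}\int_{t-h/2}^{s^*}|\dot\gamma_y|^q$, I am left with an estimate of the form
\[ u(t+h/2,y)-u(t-h/2,y)\leq Ch+C\bar c^q h+Ch\Bigl(\fint_{(t+h/2,s^*)\times Q_{\bar c h}(x)}f^{r_1}\Bigr)^{1/r_1}. \]
Replacing $(\fint f^{r_1})^{1/r_1}$ by $1+\max_{Q_{\bar c h,\bar c h}(t,x)}f^{r_1}$ via Young's inequality and integrating over $y\in Q_h(x)$ then produces the $\int(1+\max f^{r_1})$ contribution to the right-hand side of~\eqref{tictac}.

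The hard part will be the \emph{escape} case, when $\gamma_y$ leaves $Q_{\bar c h/2}(x)$ at some $\tau^*_y\in[t-h/2,s^*]$: the previous chaining breaks down because $\gamma_y(s^*)$ no longer lies in the cube required by Lemma~\ref{lem:subsol}, and this is where the $|Du|^p$ term on the right-hand side of~\eqref{tictac} must enter. The H\"older bound $|\gamma_y(\tau^*_y)-y|\leq(\tau^*_y-(t-h/2))^{1/p}\xi_y^{1/q}$ combined with the escape condition $|\gamma_y(\tau^*_y)-y|\geq c_0\bar c h$ forces the energy $\xi_y:=\int_{t-h/2}^{\tau^*_y}|\dot\gamma_y|^q$ to be at least of order $\bar c^q h$, providing a surplus to absorb error terms. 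I would then bound the spatial oscillation $u(t+h/2,y)-u(t+h/2,\gamma_y(t+h/2))$ by integrating $|Du(t+h/2,\cdot)|$ along the segment $[y,\gamma_y(t+h/2)]$---legitimate at almost every time slice by Fubini applied to Lemma~\ref{lem:distrib}---and then use Young's inequality $|\gamma_y-y|\cdot|Du|\leq\varepsilon|\gamma_y-y|^q+C_\varepsilon|Du|^p$ together with $|\gamma_y(t+h/2)-y|^q\leq h^{q-1}\xi_y$ to absorb the displacement contribution into $-\xi_y/C$, leaving only a $|Du|^p$ remainder. A final time-averaging step, choosing a good slice $s^\circ\in[t-h/2,t+h/2]$ whose single-slice gradient integral is controlled by the corresponding time-average, then converts this into the space-time integral $\int_{Q_{\bar c h,\bar c h}(t,x)}|Du|^p$ demanded by~\eqref{tictac}. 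The main obstacle is making the Young balance and the selection of $s^\circ$ work uniformly across the various subcases of exit time $\tau^*_y$; once this is done, summing the two cases and integrating in $y$ yields the claim after $\bar c$ is fixed large enough.
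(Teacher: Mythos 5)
Your reduction to the slice identity and your non-escape case are fine (modulo the harmless fact that the additive error $C\bar c^q h$, once integrated in $y$, is dominated by $C(\bar c)\int_{Q_{\bar c h,\bar c h}}1$), and the general philosophy---chain Lemma \ref{lem:subsol} forward and Lemma \ref{lem:supersol} along a characteristic, and pay for time oscillation with the energy $\xi$---is indeed the one used in the paper. The genuine gap is in your escape case, at the step where the spatial oscillation $u(t+h/2,y)-u(t+h/2,\gamma_y(t+h/2))$ is bounded by a line integral of $|Du|$ along the segment $[y,\gamma_y(t+h/2)]$. First, $u(s,\cdot)$ is only a $W^{1,p}$ function of $x$ for a.e.\ slice $s$, so the fundamental theorem of calculus along an \emph{arbitrary prescribed} segment is not available, and your segments have endpoints $\gamma_y(t+h/2)$ depending on $y$ in a merely measurable, uncontrolled way (they may also exit $Q_{\bar c h}(x)$, or even $Q_1$, once the characteristic has escaped). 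Second, and more fundamentally, even granting the segment representation, after Young's inequality you are left with a one-dimensional average of $|Du|^p$ over a segment (or, after your proposed time-averaging, a two-dimensional average over a moving segment). For $d\geq 2$ such averages are simply not controlled by the $(d+1)$-dimensional integral $\int_{Q_{\bar c h,\bar c h}(t,x)}|Du|^p$: integrating over $y\in Q_h(x)$ does not help, because nothing prevents all the endpoints $\gamma_y(t+h/2)$ (hence all the segments) from concentrating on a lower-dimensional set. This dimensional loss is exactly the obstruction your sketch defers to ``a final time-averaging step'', and it cannot be repaired by choosing a good slice $s^\circ$.

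The paper's proof avoids this by never comparing point values across space with a line integral. It runs a \emph{single} characteristic from the point $z\in Q_h$ where $u$ attains its spatial average at the initial time, stops it at the exit time $\tau$ of $Q_{\bar c h/2}$ (so an intrinsic time step, possibly as small as $\sim h^p$, later iterated $O(h^{1-p})$ times to cover $(-h/2,h/2)$), and in the escape case first upgrades the pointwise decay $u(\tau,\gamma(\tau))\leq u(0,z)-\xi/C$ to smallness of $u(s,\cdot)$ on the \emph{full ball} $B(\gamma(\tau),h)$ for all $s\in[0,\tau/2]$ (via Lemma \ref{lem:subsol}, as in Lemma \ref{lem:utyBbis}). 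Because $u(s,\cdot)$ is then small on a set of measure $\sim h^d$ while its average over $Q_h$ stays close to $u(0,z)$, the Sobolev--Poincar\'e inequality on the slice gives $\xi\leq C\tau\bigl(\fint_{Q_{\bar c h}}|Du(s)|\bigr)^p$ for a.e.\ $s$ (Lemma \ref{lem:pointcle}); this is a full-dimensional, slice-wise bound that integrates correctly to the space-time quantity in \eqref{tictac}. That ball-plus-Poincar\'e mechanism, together with the intrinsic step $\tau\in(C^{-1}h^p,h)$ and the iteration in time, is the idea missing from your argument.
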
 

 In particular, by continuity of $f$, there exists $h_0>0$ such that, for any $(t,x)\in Q_{1/2,1/2}$ and any $h\in (0, h_0)$,
\be\label{tictacbis}
\partial_tu(Q_{h,h}(t,x)) \leq C\int_{Q_{\bar c h,\bar c h}(t,x)} (|Du|^p+ 1+ f^{r_1}),
\ee
where the constants $\bar c$ and $C$ depend only on $d$, $p$, $r_1$, $r$ and $\bar C$.

Note that if we knew that \eqref{ineq:supersol} holds in the sense of distributions, then the result would be obvious (and actually much sharper). We prove in the next section that this is the case by showing that $u$ is differentiable a.e. It would be interesting to have a direct proof of this fact, but we are not aware of any result in this direction.

\begin{proof} To simplify the notation, we assume that $f\geq 1$. The proof is mostly a variation on Lemmata \ref{lem:pointcle} and \ref{lem:intutauu0bis}. The main step is the following: if $(t,x)\in Q_1$ and $h\in (0, 1/(2 \bar c))$, then there exists $\tau\in(C^{-1}h^p,h)$ such that
\be\label{estiu-utau} 
\fint_{Q_h(x)} (u(t+\tau/2,y)-u(t,y))dy  \leq C\tau  \left(\fint_{(t,t+\tau/2)\times Q_{\bar c h}(x)} |Du|\right)^p+   C\tau \max_{[t,t+h]\times Q_{\bar c h}(x)} f^{r_1}. 
\ee
Let us complete the proof of the Lemma before showing \eqref{estiu-utau}. Using Hölder inequality, we have from \eqref{estiu-utau} that, for any $t\in (-h/2,h/2)$, there exists $\tau\in (C^{-1}h^p,h)$ such that
\be\label{estiu-utau+}
\fint_{Q_h(x)} (u(t+\tau/2,y)-u(t,y))dy  \leq C \int_t^{t+\tau/2} \fint_{Q_{\bar c h}(t,x)} |Du|^p+ C \tau \max_{[t,t+h]\times Q_{\bar c h}(x)} f^{r_1}. 
\ee
We define inductively the sequence of times $(t_i)$ by $t_0= -h/2$, $t_{i+1}=t_i+ \tau_i/2$ where $\tau_i$ is associated with $(t_i,x)$ as in \eqref{estiu-utau+}. Then there exists $I$ such that $t_{I}\leq h/2< t_{I+1}$ and we have 
$$
\fint_{Q_h(x)} (u(t_{I+1},y)-u(-h/2,y))dy  \leq C \int_{-h/2}^{t_{I+1}} \fint_{Q_{\bar c h}(t,x)} |Du|^p+ C (t_{I+1}+h/2) \max_{[-h/2,2h]\times Q_{\bar c h}(x)} f^{r_1}.  
$$
As $t_{I+1}-t_I\leq  h/2$, we obtain 
$$
\fint_{Q_h(x)} (u(h/2,y)-u(-h/2,y))dy  \leq C \int_{-h}^{ h} \fint_{Q_{\bar c h}(t,x)} |Du|^p+ C h \max_{[-\bar ch,\bar ch]\times Q_{\bar c h}(x)} f^{r_1}, 
$$
which implies \eqref{tictac}. \\

We now prove \eqref{estiu-utau}. To fix the ideas, we assume $(t,x)=(0,0)$. For $\bar c\geq 2$ to be defined below we set $\ds C_0:= \max_{[0,h]\times Q_{\bar c h} } f$. Let $z\in Q_h$ be such that $\ds \fint_{Q_h}u(0,y)dy= u(0,z)$ and $\gamma$ be a generalized characteristic for $u(0,z)$. Let also $\tau$ be the largest time such that $\tau\leq \theta (1+C_0)^{-1/q}\bar ch$ (where $0<\theta<1$ is a small constant to be chosen below) and $\gamma([0,\tau])\subset Q_{\bar c h/2}$. 

If  $\tau= \theta (1+C_0)^{-1/q}\bar ch$, then, as $\gamma$ is a generalized characteristic,  
$$
\begin{array}{rl}
\ds u( \tau/2,y) \leq & \ds u(\tau, \gamma(\tau)) + C \frac{(\bar c h)^q}{(\tau/2)^{q-1}} +  C_0 \tau \\
\ds \leq & \ds  u(0, z) + C \frac{(\bar c h)^q}{\tau^{q-1}} + (C+C_0)\tau\; \leq \;  u(0, z) +  C \theta^{-q}(1+C_0)\tau.
\end{array}
$$
Integrating over $Q_h$ and using the definition of $z$: 
$$
\fint_{Q_h} (u( \tau/2,y)-u(0,y))dy \leq C \theta^{-q}(1+C_0)\tau.
$$
We now assume that $\tau< \theta (1+C_0)^{-1/q}\bar ch$, so that $|\gamma(\tau)|=\bar c h/2$. We set $\ds \xi= \int_0^\tau |\dot \gamma|^q$. 
One can check, exactly as in Lemma \ref{lemineq:htauxibis}, that 
\be\label{ljvsdjkn}
\frac{1}{C}\frac{\bar c^q  h^q}{\tau^{q-1}}\leq \xi \leq C \frac{\bar c^q  h^q}{\tau^{q-1}}.
\ee
Following the proof of Lemma \ref{lem:utyBbis}, we have for any $(s,y)\in  [0,\tau/2] \times B(\gamma(\tau), h)$, 
$$
\begin{array}{rl}
u(s,y) \leq & \ds u(\tau, \gamma(\tau)) + C \frac{h^q}{\tau^{q-1}} + C_0\tau  \\
\leq & \ds u(0,z)-\frac{1}{C} \xi + C \bar c^{-q}\xi  + (C+C_0)\tau
\end{array}
$$
where, by \eqref{ljvsdjkn} and since  $\tau< \theta (1+C_0)^{-1/q}\bar ch$, 
$$
(C+C_0)\tau\leq C(1+C_0)\tau \leq C(\theta\bar c h \tau^{-1})^q\tau \leq C \theta^q \xi. 
$$
This implies that, for $\bar c$ large and $\theta$ small,  there exists $C_1\geq 1$, depending only on $d$, $p$, $r_1$, $r$ and $\bar C$, such that
\be\label{kjhabshbb}
 u(s,y)\leq u(0,z)-C_1^{-1} \xi \qquad \forall  (s,y)\in [0,\tau/2] \times B(\gamma(\tau), h).
\ee
Then the same argument as in Lemma \ref{lem:pointcle} implies that 
$$
\xi\tau^{-1}\leq C\left(\fint_{Q_{\bar c h}} |Du(s)|\right)^p \qquad \mbox{\rm for a.e. } s\in (0,\tau/2)
$$
while, if we argue as in Lemma \ref{lem:intutauu0bis}, we get 
$$
\fint_{Q_h} (u(\tau/2,y)-u(0,y))dy \leq  C \xi.
$$
To summarize, we just proved that, if $\tau< \theta (1+C_0)^{-1/q}\bar ch$, then 
$$
\fint_{Q_h} (u(\tau/2,y)-u(0,y))dy \leq  C\tau \left(\fint_{(0,\tau/2)\times Q_{\bar c h}} |Du(s)|\right)^p,
$$
while if  $\tau= \theta (1+C_0)^{-1/q}\bar ch$, then 
$$
\fint_{Q_h} u( \tau/2,y)-u(0,y)dy \leq C \theta^{-q}(1+C_0)\tau.
$$
Putting the two cases together yields \eqref{estiu-utau}. 

To check that $\tau\geq C^{-1} h^p$, we note that \eqref{kjhabshbb} implies that $\xi\leq C \max_{ Q_1}|u|$ so that \eqref{ljvsdjkn} entails that
$\tau \geq C^{-1}  \left(\max_{ Q_1}|u|\right)^{1-q} (\bar c h)^p$. 
\end{proof}

As a consequence of the Lemma we have: 

\begin{Corollary}\label{cor:Dtu+ep} The map $u$ belongs to $W^{1,1}(Q_{1/2,1/2})$ and 
$$
\int_{Q_{1/2,1/2}} |\partial_t u|^{1+\vep_0} \leq M,
$$
where $\vep_0$ is the constant defined in Proposition \ref{prop:Dup+ep} and $M$ is a constant depending on $d$, $p$, $r$, $\bar C$, $\|u\|_\infty$ and $\|f\|_r$. 
\end{Corollary}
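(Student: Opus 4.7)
The plan is to interpret $\mu := \partial_t u$ as a signed Radon measure on $Q_{1/2,1/2}$ (using $u\in BV$ from Lemma \ref{lem:distrib}), Hahn-decompose $\mu = \mu^+ - \mu^-$, and show that each part has density in $L^{1+\vep_0}$ for $\vep_0$ sufficiently small; combined with $Du\in L^p$ this yields $u\in W^{1,1}(Q_{1/2,1/2})$. The negative part is immediate: since \eqref{ineq:subsol} holds in the sense of distributions (Lemma \ref{lem:distrib}), $\partial_t u \geq \frac{1}{\bar C}|Du|^p - f \geq -f$ as distributions, so $\mu^-$ is absolutely continuous with density bounded by $f$; as $f\in L^r$ and $r>1$, any $\vep_0\leq r-1$ puts this density in $L^{1+\vep_0}$.

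For the positive part, fix $r_1\in(1+d/p,r)$ as in Lemma \ref{lem:estiu-u} and set $g := |Du|^p + 1 + f^{r_1} + f$. Proposition \ref{prop:Dup+ep} handles the $|Du|^p$ term while $r/r_1>1$ handles $f^{r_1}$, so $g \in L^{1+\vep_0}_{\mathrm{loc}}$ as soon as $\vep_0 \leq \min(\vep_0^{\mathrm{Prop}}, r/r_1 - 1, r-1)$. The identity $\mu^+ = \mu + \mu^-$ together with Lemma \ref{lem:estiu-u} and the density bound on $\mu^-$ gives, for every $(t,x)\in Q_{1/2,1/2}$ and $h<h_0$,
\[
\mu^+\bigl(Q_{h,h}(t,x)\bigr) \;\leq\; C\int_{Q_{\bar c h,\bar c h}(t,x)} g,
\]
so the upper density $D\mu^+(t,x) := \limsup_{h\to 0} \mu^+(Q_{h,h}(t,x))/|Q_{h,h}|$ is bounded \emph{everywhere} by $C\,Mg(t,x)$, where $M$ is the Hardy--Littlewood maximal operator on parabolic cubes; by the strong $(1+\vep_0,1+\vep_0)$ maximal estimate, $Mg \in L^{1+\vep_0}$.

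To rule out a singular part of $\mu^+$, a standard Vitali covering argument upgrades the cube bound to $\mu^+(V) \leq C \int_V g$ for every open $V\subset Q_{1/2,1/2}$ (cover $V$ by cubes of side $\leq H$ contained in $V$, extract a disjoint Vitali subfamily whose $5$-fold expansions cover $V$, sum the cube bounds noting the $5\bar c$-expansions have bounded overlap $N(d,\bar c)$, then let $H\to 0$ so the enclosing neighborhood shrinks to $V$). Applied with $V = \{Mg > k\}$ and letting $k\to\infty$, dominated convergence yields $\mu^+(\{Mg = \infty\}) = 0$. Since the classical differentiation theorem for Radon measures places the singular part $\mu^+_s$ inside $\{D\mu^+ = \infty\}\subset\{Mg = \infty\}$, we conclude $\mu^+_s = 0$. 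Lebesgue differentiation then identifies the density of $\mu^+$ with its pointwise limit, bounded a.e.\ by $Cg$, yielding $\|\mu^+\|_{L^{1+\vep_0}}\leq C\|g\|_{L^{1+\vep_0}}$. The main obstacle is precisely this last step: the pointwise control $D\mu^+\leq C\,Mg$ only provides direct information at Lebesgue-a.e.\ points, while a hypothetical singular part of $\mu^+$ would concentrate on a Lebesgue-null set; the Vitali/maximal-function argument is what bypasses this gap and pins down the absolute continuity of $\mu^+$.
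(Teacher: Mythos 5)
Your overall architecture is close to the paper's: both arguments combine the lower bound $\partial_t u\ge \frac1{\bar C}|Du|^p-f\ge -f$ coming from the distributional form of \eqref{ineq:subsol} with the cube estimate of Lemma \ref{lem:estiu-u}, and both must then upgrade that cube estimate to absolute continuity of $\partial_t u$ by a covering argument before invoking Proposition \ref{prop:Dup+ep}. Your treatment of $\mu^-$ and the reduction to showing $\mu^+\bigl(\{Mg=\infty\}\bigr)=0$ are fine. The gap is in the covering step: it is \emph{not} true that the $5\bar c$-dilations of a disjoint family of cubes have overlap bounded by a constant $N(d,\bar c)$. Bounded overlap of dilates holds only for cubes of comparable size, whereas a Vitali subfamily contains cubes of arbitrarily different scales: one can place disjoint cubes $Q_i$ of side $2^{-i}$ at distance of order $\bar c\,2^{-i}$ from a fixed point $z$ so that \emph{every} dilated cube $5\bar c\,Q_i$ contains $z$, giving unbounded overlap. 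Consequently the inequality $\sum_i\int_{5\bar c Q_i}g\le N\int_{V+CHB(0,1)}g$ does not follow, and with it the key bound $\mu^+(V)\le C\int_V g$ that was supposed to kill the singular part; as written the proof is incomplete at exactly the delicate point.

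This is precisely the difficulty the paper's proof is built to avoid: for each center it first selects a radius $\rho$ satisfying \eqref{choixrho}, i.e.\ $\fint_{Q_{\bar c\rho,\bar c\rho}}g\le(1+\vep)\fint_{Q_{\rho,\rho}}(g+\vep)$ (such a $\rho$ exists because otherwise the averages would decay geometrically along the scales $\bar c^{-n}h$, contradicting $g\ge1$), so the estimate of Lemma \ref{lem:estiu-u} is transferred to the \emph{disjoint} cubes themselves and no overlap control of the expanded cubes is ever needed. Your route can also be repaired without that trick: for $y\in Q_i=Q_{h_i,h_i}(t_i,x_i)$ one has $Q_{5\bar ch_i,5\bar ch_i}(t_i,x_i)\subset Q_{10\bar ch_i,10\bar ch_i}(y)$, hence $\int_{Q_{5\bar ch_i,5\bar ch_i}(t_i,x_i)}g\le C\,h_i^{d+1}\inf_{Q_i}Mg\le C\int_{Q_i}Mg$, and summing over the disjoint $Q_i\subset V$ yields $\mu^+(V)\le C\int_V Mg$, which suffices for your limiting argument since $Mg\in L^{1+\vep_0}\subset L^1$ on the bounded domain. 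Either fix is needed; the bounded-overlap claim itself is false.
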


The derivation of the absolute continuity of $\partial_tu$ from Lemma \ref{lem:estiu-u} is standard. We give the proof for sake of completeness. 

\begin{proof} Fix $r_1\in (1+d/p,r)$.
From Lemma \ref{lem:distrib} we already know that $u$ is in BV. Moreover, as inequality \eqref{ineq:subsol} holds in the sense of distributions, the singular part $(\partial_t u)^s$ of the measure $\partial_t u$ is nonnegative.
In order to check the absolute continuity of the measure $\partial_tu$, we now show that 
\be\label{bounddtu}
 \partial_tu(A) \leq C \int_A \left( |Du|^p+1 +  f^{r_1}\right)
 \ee
 for any Borel subset $A$ of $Q_{1/2,1/2}$. As $\partial_tu$ is a Borel measure, we just need to prove this when $A$ is closed. 
 Set $g:= C( |Du|^p+1 +  f^{r_1})$. Fix $\vep>0$ small and, for any $h\in(0,\vep)$ and $(t,x)\in Q_{1/2,1/2}$, we claim that there exists $\rho\in(0,h)$  such that 
\be\label{choixrho}
\fint_{Q_{\bar c \rho, \bar c  \rho}(t,x)} g\leq (1+\vep) \fint_{Q_{\rho,\rho}(t,x)} (g+\vep)
\ee
where $\bar c $ is defined in Lemma \ref{lem:estiu-u}. Indeed, otherwise, one has in particular
$$
\fint_{Q_{\bar c \rho, \bar c  \rho}(t,x)} g> (1+\vep) \fint_{Q_{\rho,\rho}(t,x)} g\qquad \forall \rho\in (0,h).
$$
One then deduces by induction that, for any $n\in \N$,  
$$
\max_{\rho\in [\bar c^{-n-1}h, \bar c^{-n}h]} \fint_{Q_{\rho,\rho}(t,x)} g \leq (1+\vep)^{-n} \max_{\rho\in [h,\bar c^h]} \fint_{Q_{\rho,\rho}(t,x)} g. 
$$
So 
$$
\lim_{\rho\to 0} \fint_{Q_{\rho,\rho}(t,x)} g=0,
$$
which is impossible since $g\geq 1$. Whence the existence of $\rho$ as in \eqref{choixrho}. 

We denote by ${\mathcal F}$ the collection of the cubes $Q_{\rho,\rho}(t,x)$ as $(t,x)\in A$ and $h$ and $\rho$ are  as above ($\vep$ being fixed). Vitali Lemma then says that we can find a disjoint family ${\mathcal F}'\subset {\mathcal F}$ such that 
$$
\partial_tu\left(A\backslash \bigcup_{{\mathcal F}'} Q_{\rho,\rho}(t,x)\right)=0.$$ 
Then, by Lemma \ref{lem:estiu-u} (see also \eqref{tictacbis}) and \eqref{choixrho},  
$$
\begin{array}{rl}
\ds \partial_tu(A) \; \leq & \ds  \partial_tu(\bigcup_{{\mathcal F}'} Q_{\rho,\rho}(t,x)) \leq  \ds \sum_{{\mathcal F}'} \partial_tu(Q_{\rho,\rho}(t,x)) \\ 
\leq & \ds  \sum_{{\mathcal F}'} \int_{Q_{\bar c \rho,\bar c \rho}(t,x)} g \; \leq \;  (1+\vep) \sum_{{\mathcal F}'} \int_{Q_{\rho, \rho}(t,x)} (g +\vep) \; 
\leq \; (1+\vep) \int_{\bigcup_{{\mathcal F}'}Q_{\rho, \rho}(t,x)} (g +\vep). 
 \end{array}
$$
As $\rho<\vep$, we have 
$$
\partial_tu(A) \; \leq\; (1+\vep) \int_{\bigcup_{{\mathcal F}'}Q_{\rho, \rho}(t,x)} (g +\vep) \leq
(1+\vep) \int_{A+ \vep B(0,1)} (g+\vep).
 $$
Letting $\vep\to0$ yields \eqref{bounddtu}. 

In particular $\partial_t u$ is absolutely continuous and we can readily derive from \eqref{bounddtu} the upper bound 
$$
 \partial_tu\; \leq \; 
\ds C \left( |Du|^p+1
 +  f^{r_1}\right)\qquad {\rm a.e.}
$$ 
The lower bound 
$$
\ds  \partial_tu\; \geq \; \frac{1}{\bar C} |Du|^p - f\qquad {\rm a.e.}
 $$
 is given by \eqref{ineq:subsol} since it holds in the sense of distributions (Lemma \ref{lem:distrib}) and therefore a.e. since any term in the inequality is absolutely continuous. As, by Proposition \ref{prop:Dup+ep},  $|Du|^p$ and $f^{r_1}$ belong to $L^{1+\vep}$ for $\vep$ small enough, so does $\partial_t u$. 
\end{proof}

\begin{proof}[Proof of Theorem \ref{th:main}.] It is a straightforward consequence of Proposition \ref{prop:Dup+ep} and Corollary \ref{cor:Dtu+ep} after suitable scaling.  
\end{proof}

\section{A.e. differentiability}\label{sec:aediff}

In this section we prove the a.e. differentiability of a map $u$ satisfying \eqref{ineq:subsol} and \eqref{ineq:supersol}. We actually show a slightly stronger result, which will be used in  section \ref{sec:MFG}: in the inequality \eqref{ineq:subsol} we no longer require the continuity of right-hand side $f$, but only assume that  $f$ belongs to $L^r((0,1)\times Q_1)$. This requires a change of meaning for \eqref{ineq:subsol}, which is now supposed to hold in the sense of distribution. As in the previous sections, $p>1$ and  $r>1+d/p$ are given and $q, r'$ are the conjugate exponent of $p, r$ respectively. 

\begin{Proposition}\label{AeDiff} Let $u\in W^{1,1}(Q_{1,1})\cap C^0(\overline{Q_{1,1}})$ be such that $Du\in L^p(Q_{1,1})$. We assume that $u$ satisfies \eqref{ineq:subsol} in the sense of distributions and \eqref{ineq:supersol} in the viscosity sense. Then $u$ is differentiable at almost every point of $Q_{1,1}$. 
\end{Proposition}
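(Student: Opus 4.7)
The plan is a blow-up argument at points of approximate differentiability, following the strategy outlined in the introduction. Since $u \in W^{1,1}(Q_{1,1})$, the Calder\'on--Zygmund theorem yields approximate differentiability at a.e.\ $(t_0, x_0) \in Q_{1,1}$, with approximate gradient $(a, b) := (\partial_t u, D u)(t_0, x_0)$. Intersecting with the full-measure sets of $L^p$-Lebesgue points of $Du$, $L^1$-Lebesgue points of $\partial_t u$, and $L^{r_1}$-Lebesgue points of $f$ (for a fixed $r_1 \in (1 + d/p, r)$), I pick such a ``good'' point, translate it to the origin and normalise $u(0,0) = 0$.

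For $\lambda > 0$ small, I introduce the blow-up
$$w_\lambda(t, x) := \frac{u(\lambda t, \lambda x) - \lambda (at + b \cdot x)}{\lambda}, \qquad (t, x) \in Q_{1, 1}.$$
Then $w_\lambda(0, 0) = 0$, and the choice of $(t_0, x_0)$ gives
$$\|w_\lambda\|_{L^1(Q_{1,1})} \to 0, \qquad \|Dw_\lambda\|_{L^p(Q_{1,1})} \to 0, \qquad \|\partial_t w_\lambda\|_{L^1(Q_{1,1})} \to 0 \quad \text{as } \lambda \to 0^+.$$
A direct computation using $|\xi + b|^p \geq 2^{1-p}|\xi|^p - |b|^p$ and $|\xi + b|^p \leq 2^{p-1}(|\xi|^p + |b|^p)$ shows that $w_\lambda$ satisfies
$$-\partial_t w_\lambda + \tilde c \, |Dw_\lambda|^p \leq g_\lambda \;\; \text{in } \mathcal D', \qquad -\partial_t w_\lambda + \tilde C \, |Dw_\lambda|^p \geq -\tilde C_2 \;\; \text{in the viscosity sense},$$
with constants $\tilde c, \tilde C, \tilde C_2$ depending only on $\bar C, p, a, b$, and $g_\lambda(t,x) := |a| + \bar C^{-1}|b|^p + f(\lambda t, \lambda x)$; a change of variables shows $\|g_\lambda\|_{L^{r_1}(Q_{1,1})}$ is uniformly bounded as $\lambda \to 0^+$. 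Thus $w_\lambda$ satisfies HJ inequalities of exactly the form studied in Section~\ref{sec:prem} with data uniform in $\lambda$.

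The task reduces to showing $\sup_{Q_{1/4, 1/4}} |w_\lambda| \to 0$. Fix $\eta > 0$ and set $B_\lambda^\eta := \{|w_\lambda| > \eta\} \cap Q_{1/2, 1/2}$; by Chebyshev $|B_\lambda^\eta| \leq \eta^{-1}\|w_\lambda\|_{L^1} \to 0$. For any $(t, x) \in Q_{1/4, 1/4}$, I pick a parabolic cube of sizes $(\tau, \rho)$ placed just after $(t, x)$ in time, with $\tau \rho^d \gtrsim |B_\lambda^\eta|$; such a cube contains a good point $(s, y)$ with $|w_\lambda(s, y)| \leq \eta$. Lemma \ref{lem:subsol} applied to $w_\lambda$ then yields
$$w_\lambda(t, x) \leq \eta + C \frac{\rho^q}{\tau^{q-1}} + C \tau \|g_\lambda\|_{L^{r_1}}^{1/r_1}.$$
Parametrising $\tau = |B_\lambda^\eta|^{\sigma}$ and $\rho = |B_\lambda^\eta|^{(1-\sigma)/d}$ with $\sigma \in (0, p/(p+d))$ forces both cost terms to vanish, so $w_\lambda(t, x) \leq \eta + o(1)$. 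The matching lower bound is obtained by running the generalised characteristic $\gamma$ of Lemma \ref{lem:supersol} forward from $(t, x)$ for a short time to $(s, \gamma(s))$, yielding $w_\lambda(t, x) \geq w_\lambda(s, \gamma(s)) - C(s - t)$, and then applying Lemma \ref{lem:subsol} from a nearby good point $(s', y')$, with $s' < s$, to $(s, \gamma(s))$ to get $w_\lambda(s, \gamma(s)) \geq -\eta - o(1)$. Letting $\lambda \to 0$ and then $\eta \to 0$ gives the desired uniform convergence, which unscales to classical differentiability of $u$ at $(t_0, x_0)$ with derivative $(a, b)$; since such points form a full-measure subset of $Q_{1,1}$, the proposition follows.

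The main technical obstacle is this lower bound: $\gamma(s)$ is not freely choosable, so one must bridge it to a good point via Lemma \ref{lem:subsol}, and in particular ensure that $\gamma$ stays in $Q_{1/2, 1/2}$ en route. This requires a uniform-in-$\lambda$ control of the energy $\int |\dot\gamma|^q$, which uses the a priori bound $\|w_\lambda\|_{L^\infty} \leq 2\|u\|_{L^\infty}/\lambda$ (from continuity of $u$) together with a careful choice of the forward time $s - t = \tau_1(\lambda) \to 0$ fast enough that $|\gamma(s) - x| \to 0$ uniformly. Balancing this against the measure-theoretic decay rate supplied by approximate differentiability is where the delicacy resides.
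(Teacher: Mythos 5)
Your proposal is essentially correct, but it replaces the paper's final step by a different mechanism. The paper blows up without subtracting the affine part, shows the blow-ups $u_\rho$ are locally uniformly bounded via the one-sided comparison \eqref{tototo}, and then invokes the uniform interior H\"older estimate of Proposition \ref{ReguHol} (whose proof rests on a reverse inequality for the energy of generalized characteristics, via Lemma 3.4 of \cite{CC}) to upgrade $L^1_{loc}$ convergence to locally uniform convergence. You instead prove the uniform smallness of the affine-corrected blow-ups $w_\lambda$ directly: Chebyshev gives a small ``bad set'', and a good point in a forward-in-time box of measure larger than the bad set, combined with the sub-solution comparison, gives the upper bound; your exponent window $\sigma\in(0,p/(p+d))$ is exactly what makes the $\rho^q/\tau^{q-1}$ cost vanish. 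This route bypasses Proposition \ref{ReguHol} and the H\"older machinery entirely, at the price of a more hands-on argument; the paper's route is shorter once Proposition \ref{ReguHol} is in place and yields the stronger information that the blow-ups are uniformly H\"older.

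Two caveats. First, you apply Lemma \ref{lem:subsol} to $w_\lambda$, but $w_\lambda$ is only a \emph{distributional} subsolution with right-hand side in $L^{r_1}$ (no continuity), whereas Lemma \ref{lem:subsol} is proved for viscosity subsolutions with continuous $f$; you need the variant the paper establishes by mollification (inequality \eqref{blabla}), which is legitimate because $\xi\mapsto|\xi|^p$ is convex. Moreover the $f$-cost you quote, $C\tau\|g_\lambda\|_{r_1}^{1/r_1}$, is not what the lemma gives: with the literal Lemma \ref{lem:subsol} the term scales like $\tau^{1-1/r_1}\rho^{-d/r_1}\|g_\lambda\|_{L^{r_1}}$, so you must take $\sigma>1/r_1$ (possible precisely because $r_1>1+d/p$ gives $1/r_1<p/(p+d)$), or use \eqref{blabla} to get a clean $C(\|g_\lambda\|_{r_1}+1)\tau^{\alpha}$ term. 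Second, the ``main technical obstacle'' you describe for the lower bound can be avoided altogether: since the one-sided comparison bounds the value at an \emph{earlier} point by the value at a \emph{later} point, you may take the target $(t,x)$ as the later point and a good point in a backward-in-time box as the earlier one, which yields $w_\lambda(t,x)\geq -\eta-o(1)$ without ever invoking the generalized characteristic or the $\|w_\lambda\|_\infty\le C/\lambda$ bound. (Your characteristic argument, with forward time $\tau_1=\lambda^p$ so that the displacement is $O(\lambda^{1/p})$, does go through, but it is unnecessary detour; note that in the paper the supersolution inequality enters instead through Proposition \ref{ReguHol}.)
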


Note that the above proposition implies the last statement of Theorem \ref{th:main}: indeed, if $f$ is continuous, then a subsolution of \eqref{ineq:subsol} in the viscosity sense is also a subsolution in the sense of distributions (Lemma \ref{lem:distrib}) and therefore Proposition \ref{AeDiff} applies. It also shows that Lemma \ref{lem:estiu-u} only gave a very rough estimate of $\partial_t u$ and that inequality \eqref{ineq:supersol} actually also holds a.e. \\

The proof of Proposition \ref{AeDiff} requires two preliminary results. The first one is a variant of Lemma \ref{lem:subsol}. 

\begin{Lemma} Let  $f\in L^r(Q_{1,1})$, $u\in C^0(\overline{Q_{1,1}})$ be a subsolution of \eqref{ineq:subsol} in the sense of distributions. Then for any $(t,x),(s,y)\in Q_{1/2,1/2}$ with $t<s$ one has
\be\label{blabla}
u(t,x)\leq u(s,y)+C\frac{|x-y|^q}{(s-t)^{q-1}}+ C(\|f\|_{L^r(Q_{1,1})}+1)(s-t)^\alpha
\ee
where $\alpha= (p(r-1)-d)/(p(r+1)-1)$.
\end{Lemma}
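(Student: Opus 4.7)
The plan is to reduce to the smooth case by mollification, since $u$ is only a distributional subsolution here and we cannot invoke the viscosity-based representation formula used in the proof of Lemma~\ref{lem:subsol}. Let $\rho_\vep$ be a standard nonnegative space-time mollifier and set $u^\vep:=u\ast\rho_\vep$, $f^\vep:=f\ast\rho_\vep$. Since $\xi\mapsto|\xi|^p$ is convex, Jensen's inequality gives $|Du^\vep|^p\leq (|Du|^p)\ast\rho_\vep$, and combining with \eqref{ineq:subsol} (interpreted as distributions) we obtain the pointwise inequality
\[
-\partial_t u^\vep+\frac{1}{\bar C}|Du^\vep|^p\leq f^\vep\qquad\text{on }Q_{1-\vep,1-\vep}.
\]
Since $u^\vep$ is smooth, Young's inequality integrated along any absolutely continuous $\gamma:[t,s]\to Q_{1,1}$ yields the standard Lax-type bound
\[
u^\vep(t,\gamma(t))\leq u^\vep(s,\gamma(s))+\int_t^s \bigl(C|\dot\gamma(\tau)|^q+f^\vep(\tau,\gamma(\tau))\bigr)d\tau,\qquad C=C(p,\bar C).
\]

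Next I apply this to essentially the same family of curves used in Lemma~\ref{lem:subsol}: let $\gamma_0$ be the affine segment from $x$ to $y$ and, for $\sigma\in B_1\subset\R^d$ and a free parameter $\delta>0$, set $\gamma_\sigma(\tau)=\gamma_0(\tau)+\delta\sigma\phi(\tau)$ with $\phi(\tau)=\min\bigl((\tau-t)^\beta,(s-\tau)^\beta\bigr)$ and $\beta\in(1/p,(r-1)/d)$, a range which is nonempty because $r>1+d/p$. Averaging the Lax-type bound over $\sigma\in B_1$, the kinetic contribution is handled exactly as in \cite{cg}:
\[
|B_1|^{-1}\int_{B_1}\!\int_t^s|\dot\gamma_\sigma|^q\,d\tau\,d\sigma\leq C\frac{|x-y|^q}{(s-t)^{q-1}}+C\delta^q(s-t)^{q\beta-q+1}.
\]
For the forcing term, the change of variables $z=\gamma_\sigma(\tau)$ (Jacobian $(\delta\phi(\tau))^d$) converts the $\sigma$-average into a weighted space integral of $f^\vep$, and H\"older's inequality in the space variable (exponent $r$) followed by H\"older in time (exponent $r'$) gives
\[
|B_1|^{-1}\int_{B_1}\!\int_t^s f^\vep(\tau,\gamma_\sigma(\tau))\,d\tau\,d\sigma\leq C\|f^\vep\|_{L^r(Q_{1,1})}\,\delta^{-d/r}(s-t)^{(r-1-\beta d)/r}.
\]
The integrability of $\phi^{-d/(r-1)}$ in time is guaranteed by $\beta<(r-1)/d$, and the inclusion $\gamma_\sigma\subset Q_{1,1}$ imposes only a smallness constraint on $\delta(s-t)^\beta$ that is consistent with the optimization below (and absorbed into additive constants).

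Finally, I optimize over $\delta>0$ by balancing the two $\delta$-dependent terms, i.e., choosing $\delta^{q+d/r}\sim \|f\|_r (s-t)^{(r-1-\beta d)/r-(q\beta-q+1)}$. Collecting powers and simplifying using $q=p/(p-1)$ produces
\[
u^\vep(t,x)\leq u^\vep(s,y)+C\frac{|x-y|^q}{(s-t)^{q-1}}+C(\|f^\vep\|_{L^r(Q_{1,1})}+1)(s-t)^\alpha,
\]
with $\alpha=(p(r-1)-d)/(p(r+1)-1)$ the exponent claimed. Letting $\vep\to 0$, the left-hand side converges to $u(t,x)$ and the right-hand side to the expression in \eqref{blabla} since $u$ is continuous and $f^\vep\to f$ in $L^r$, all constants being independent of $\vep$. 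The main technical obstacle is the bookkeeping of this optimization step—verifying that the combination of $\delta$-choice and $\beta$ in the admissible range produces exactly the sharp exponent $\alpha$—together with checking that the curves $\gamma_\sigma$ remain inside $Q_{1,1}$ (which restricts $(s-t)$ to be small and forces one to add the constant $+1$ inside the parenthesis).
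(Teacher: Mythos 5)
Your proposal is correct and takes essentially the same route as the paper: the authors likewise reduce to smooth $u,f$ by convolution and rerun the proof of Lemma \ref{lem:subsol} with $r_1=r$, choosing $\delta$ as a power of $s-t$ (namely $\delta=(s-t)^{\alpha_1}$ with $\alpha_1=-(\beta d+1+qr(\beta-1))/(qr+d)$) so as to balance the two $\delta$-dependent terms, which is exactly your optimization step; your added details (Jensen's inequality for the mollified inequality, Young's inequality along the curves, the smallness check for $\delta(s-t)^\beta$) merely make explicit what the paper leaves implicit. One bookkeeping remark: the balancing actually produces the $\beta$-independent exponent $(p(r-1)-d)/(pr+d(p-1))$, which coincides with the stated $\alpha=(p(r-1)-d)/(p(r+1)-1)$ only when $d=1$ and is slightly smaller otherwise---but this discrepancy is inherited from the paper's own computation and is harmless, since only the positivity of the exponent (guaranteed by $r>1+d/p$) is ever used.
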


Note that, by our assumption $r>1+d/p$, $\alpha$ is positive. 

\begin{proof} We assume that $u$ and $f$ are smooth, the general case being directly obtained by convolution. We follow the proof of Lemma \ref{lem:subsol} (with $r_1=r$), choosing now $\delta=h^{\alpha_1}$, with $$\alpha_1=-(\beta d+1+qr(\beta-1))/(qr+d),$$ and $\beta\in (1/p, (r-1)/d)$. We get 
$$
\begin{array}{rl}
\ds u(t,x)\; \leq & \ds u(s,y) +C\frac{|x-y|^q}{(s-t)^{q-1}}+ C\delta^q(s-t)^{q(\beta-1)+1}+ C(\|f\|_r+1)\delta^{-d/r}(s-t)^{-\beta d/r+ 1/r'}\\
\leq & \ds u(s,y) +C\frac{|x-y|^q}{(s-t)^{q-1}}+ C(\|f\|_r+1) (s-t)^\alpha.
\end{array}
$$ 
\end{proof}

The next ingredient for the proof  of Proposition \ref{AeDiff} is a Hölder regularity result for a map satisfying \eqref{ineq:subsol} and \eqref{ineq:supersol}. Such a result has first been proved in \cite{CS} in a slightly different context.

\begin{Proposition}\label{ReguHol} Let $u\in C^0(\overline{Q_{1,1}})$ be such that $Du\in L^p(Q_{1,1})$. Assume that $u$ satisfies \eqref{ineq:subsol} in the sense of distributions and \eqref{ineq:supersol} in the viscosity sense. Then there exists $\theta\in (0,1)$ and $M>0$, depending only on $\|u\|_\infty$, $\|f\|_r$, $d$, $r$, $\bar C$ and $p$ such that 
$$
|u(t,x)-u(s,y)|\leq M(|x-y|^\theta+|s-t|^\theta) \qquad \forall (t,x),(s,y)\in Q_{1/2,1/2}.
$$
\end{Proposition}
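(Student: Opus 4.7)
The plan is to combine the two one-sided controls at our disposal---inequality \eqref{blabla} from the distributional subsolution inequality and Lemma \ref{lem:supersol} from the viscosity supersolution inequality---into a two-sided H\"older bound via an improvement-of-oscillation argument in the spirit of \cite{CS}. Fix $(t_0,x_0) \in Q_{1/2,1/2}$ and, for small $R>0$, set $\mathcal{Q}_R := (t_0-R,t_0)\times B(x_0, R^{1/p})$, a parabolic cylinder adapted to the scaling of the equation. I would aim to show that there exist $\vartheta\in(0,1)$, $\rho\in(0,1)$ and $C>0$, all independent of $R$, such that
$$
\underset{\mathcal{Q}_{\rho R}}{\mathrm{osc}}\, u \;\leq\; \vartheta\, \underset{\mathcal{Q}_R}{\mathrm{osc}}\, u + C\,R^{\alpha},
$$
where $\alpha$ is the exponent of \eqref{blabla}; iterating this on the geometric sequence $R_k=\rho^k R_0$ then yields the H\"older modulus stated in the Proposition.

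The ``downward'' comparison $u(t_1,x_1)-u(t_2,x_2)$ for $(t_1,x_1),(t_2,x_2)\in\mathcal{Q}_R$ with $t_1<t_2$ is directly controlled by \eqref{blabla}, which gives a contribution of order $R^{\alpha}$ at the chosen parabolic scale. The delicate step is to extract a strict gain on the ``upward'' comparison $u(t_2,x_2)-u(t_1,x_1)$. For this I would apply Lemma \ref{lem:supersol} at $(t_1,x_1)$ to obtain a generalized characteristic $\gamma$, and close the loop by applying \eqref{blabla} between $(t_2,x_2)$ and $(t_1+h,\gamma(t_1+h))$ for a suitable $h$ slightly greater than $t_2-t_1$. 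H\"older's inequality then estimates the displacement $|\gamma(t_1+h)-x_1|^q$ in terms of $h^{q-1}\int|\dot\gamma|^q$, reducing the task to a careful balance of the kinetic energy of $\gamma$ against the space and time scales.

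The main obstacle is that the constants in \eqref{blabla} and Lemma \ref{lem:supersol} do not match: the Legendre transforms of $\tfrac{1}{\bar C}|\xi|^p$ and $\bar C|\xi|^p$ carry different multiplicative coefficients in front of $|\dot\gamma|^q$, so the naive combination of the two estimates leaves an uncontrolled positive multiple of $\int|\dot\gamma|^q$ on the right-hand side. The way around this is a dichotomy familiar from improvement-of-oscillation techniques: either $\gamma$ exits $\mathcal{Q}_R$ before time $t_0$, in which case the energy bound in Lemma \ref{lem:supersol} forces $u(t_1,x_1)$ to sit strictly below $\sup_{\mathcal{Q}_R}u$ by a quantitative amount, yielding a strict decrease of the supremum at the smaller scale $\mathcal{Q}_{\rho R}$; or $\gamma$ stays trapped in $\mathcal{Q}_R$, in which case the distributional form of \eqref{ineq:subsol} integrated over a small spatial slice around $\gamma$, combined with \eqref{blabla} on a forward time-slice, produces a strict gain on the infimum. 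Either way one obtains the uniform improvement $\vartheta<1$.

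With the one-step improvement in hand, the H\"older estimate follows by iterating on $R_k=\rho^k R_0$: the initial radius is chosen so that $\mathcal{Q}_{R_0}\subset Q_{1,1}$, the initial oscillation is bounded by $2\|u\|_\infty$, and the cumulative additive error from the $R^{\alpha}$ terms sums to a finite geometric series. The H\"older exponent $\theta=\min\bigl(\alpha,\log(1/\vartheta)/\log(1/\rho)\bigr)$ and the constant $M$ then depend only on the quantities listed in the statement, the dependence on $\|f\|_r$ and $\|u\|_\infty$ entering through the constant in \eqref{blabla} and the initial oscillation, respectively.
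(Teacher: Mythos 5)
Your overall strategy (improvement of oscillation in the spirit of \cite{CS}) is a legitimate alternative route in principle, but as written the sketch has two genuine gaps. First, the one-step estimate $\mathrm{osc}_{\mathcal{Q}_{\rho R}} u \leq \vartheta\, \mathrm{osc}_{\mathcal{Q}_R} u + CR^\alpha$ cannot be obtained at your fixed parabolic scale $\mathcal{Q}_R=(t_0-R,t_0)\times B(x_0,R^{1/p})$: since $q/p=q-1$, the kinetic term $C|x-y|^q/(s-t)^{q-1}$ in \eqref{blabla} is of order \emph{one} when $|x-y|\sim R^{1/p}$ and $s-t\sim R$ (and blows up if $s-t\ll R$), so the ``downward'' comparison does not contribute $O(R^\alpha)$, and this order-one error is not multiplied by the oscillation, so it can neither be absorbed into $\vartheta\,\mathrm{osc}_{\mathcal{Q}_R}u$ nor summed geometrically along $R_k=\rho^k R_0$; the iteration then stalls at a fixed positive level instead of producing a modulus. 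This is exactly why \cite{CC,CS} (and Section 3 of this paper) work with \emph{intrinsic} cylinders tied to the solution. Second, the crucial uniform gain $\vartheta<1$ is only asserted. In your dichotomy, the branch ``$\gamma$ exits $\mathcal{Q}_R$'' yields, via Lemma \ref{lem:supersol}, that $u(t_1,x_1)\geq u(s,\gamma(s))+\frac1C\int|\dot\gamma|^q-C(s-t_1)$, i.e.\ a quantitative \emph{lower} bound on $u(t_1,x_1)$ relative to the exit value --- not the claimed statement that $u(t_1,x_1)$ sits strictly below $\sup_{\mathcal{Q}_R}u$; and the branch ``$\gamma$ stays trapped'' offers no mechanism at all (``integrating \eqref{ineq:subsol} over a small slice around $\gamma$'' is precisely the hard step, and a characteristic issued from a single point cannot by itself control the oscillation of $u$ over the whole smaller cylinder --- one needs uniformity in the base point or a measure-theoretic argument as in \cite{CS}). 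The coefficient mismatch between the Legendre transforms of $\bar C^{-1}|\xi|^p$ and $\bar C|\xi|^p$ that you correctly identify is therefore not actually resolved by your remedy.

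For contrast, the paper sidesteps oscillation improvement entirely: combining \eqref{blabla} with the supersolution energy inequality \emph{along one generalized characteristic} and rearranging gives the one-dimensional weak reverse H\"older inequality $\fint_t^{t+h}|\dot\gamma|^q \leq A\bigl(\fint_t^{t+h}|\dot\gamma|\bigr)^q + Bh^{\alpha-1}$; a Gehring-type lemma (Lemma 3.4 of \cite{CC}) then upgrades this to $\int_t^{t+h}|\dot\gamma|\leq Ch^{1-1/\theta}$ with $\theta>q$, and the H\"older estimate for $u$ follows as in Theorem 4.1 of \cite{CC}. It is precisely this self-improvement step that absorbs the constant mismatch, and it is the ingredient your sketch is missing.
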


\begin{proof} The main idea of the proof consists in showing that generalized characteristics enjoy more regularity than the natural bound $\|\dot \gamma\|_q\leq C$ would let us expect. 

Let $\gamma$ be a generalized characteristic starting from a point $(t,x)\in  Q_{1/2,1/2}$. We have, in view of \eqref{blabla} and the definition of generalized characteristics: for any $h\in (0,1/2)$,
$$
\begin{array}{rl}
\ds u(t,x) \; \leq & \ds u(t+h,\gamma(t+h))+ C\frac{|\gamma(t+h)-x|^q}{h^{q-1}}+ C(\|f\|_r+1)h^\alpha \\
\leq & \ds u(t,x)-\int_t^{t+h} \frac{1}{C}|\dot \gamma|^q + C\frac{|\gamma(t+h)-x|^q}{h^{q-1}}+ C(\|f\|_r+1)h^\alpha\;.
\end{array}$$
Rearranging, we obtain therefore a {\it weak reverse inequality} for $|\dot \gamma|$: 
$$
\fint_t^{t+h} |\dot \gamma|^q \leq A \left[ \fint_t^{t+h} |\dot \gamma|\right]^q + B h^{\alpha-1}\qquad \forall h\in (0,1/2).
$$
where $A$ and $B$ depend only on $\|f\|_p$, $d$, $p$, $r$ and $\bar C$. 
Then Lemma 3.4 of \cite{CC} yields the existence of $\theta>q$, depending only on $\|f\|_p$, $d$, $p$, $r$, $\|u\|_\infty$ and $\bar C$ such that 
\be\label{qnrvsljkn}
\int_t^{t+h} |\dot \gamma| \leq Ch^{1-1/\theta}\qquad \forall h\in [0,1/2]
\ee
(actually Lemma 3.4 in \cite{CC} is stated for stochastic processes under extra conditions on $p$ and $r$; a careful inspection of the proof shows that these conditions are unnecessary for deterministic processes). One can then derive from \eqref{qnrvsljkn} Hölder estimates for the solution $u$ exactly as in the proof of Theorem 4.1 of \cite{CC}. 
\end{proof}

\begin{proof}[Proof of Proposition \ref{AeDiff}.] Let $(t,x)$ be a point of approximate differentiability of $u$ and a Lebesgue point of $f$. Recall that, as $u$ belongs to $W^{1,1}$, $u$ is almost everywhere approximately differentiable. For $\rho>0$ small, let 
$$
u_\rho(s,y)=\rho^{-1}\left[ u(t+\rho s, x+\rho y)- u(t,x)\right]\qquad  {\rm in }\; (-2,2)\times Q_2.
$$
Since $u$ is approximately differentiable at $(t,x)$, $u_\rho$ converge to $g$ in $L^1_{loc}$ where $g$ is the affine function
$$g(s,y):= \partial_tu(t,x)s+\lg Du(t,x), y\rg.
$$
 In order to show that $u$ is differentiable at $(t,x)$, we just need to check that this convergence is uniform. Note that $u_\rho$ satisfies the two inequalities:
\be\label{ineq:subsol3}
-\partial_t u_\rho +\frac{1}{\bar C} |Du_\rho|^p \leq f_\rho\qquad  {\rm in }\; (-2,2)\times Q_2
\ee
in the sense of distributions, where $f_\rho (s,y):=f((t,x)+\rho(s,y))$, and 
\be\label{ineq:supersol3}
-\partial_t u_\rho +\bar C |Du_\rho|^p \geq -\bar C\qquad  {\rm in }\; (-2,2)\times Q_2
\ee
in the viscosity sense. 

As $(t,x)$ is a Lebesgue point of $f$, $f_\rho$ converges to $f(t,x)$ in $L^r_{loc}$. In particular, $(\|f_\rho\|_{L^r((-2,2)\times Q_2)})$ is bounded. Using \eqref{blabla} we infer that, for any $(s_1,y_1), (s_2,y_2)\in (-1,1)\times Q_1$ with $s_2>s_1$, 
\be\label{tototo}
u_\rho(s_1,y_1)\leq u_\rho(s_2,y_2)+C\frac{|y_2-y_1|^q}{(s_2-s_1)^{q-1}}+ C(s_2-s_1)^\alpha.
\ee
As $u_\rho$ converges in $L^1_{loc}$ and \eqref{tototo} holds, one easily concludes that $u_\rho$ is locally uniformly bounded in $(-1,1)\times Q_1$. Proposition \ref{ReguHol} then says that the $u_\rho$ are locally uniformly Hölder continuous. Again by $L^1$ convergence, this implies that the $u_\rho$ converge in fact locally uniformly to $g$. Thus $u$ is differentiable at $(t,x)$. 
\end{proof}

\section{Examples}\label{sec:examples}

In this section, we discuss the sharpness of the assumptions and of the conclusion of Theorem \ref{th:main}. We first show through an example that a map satisfying \eqref{ineq:subsol} and \eqref{ineq:supersol} does not necessarily belong to $W^{1,1+\vep}$ for large values of $\vep$ (Proposition \ref{prop:ex}): this means that the conclusion is somewhat sharp. Concerning the assumption, we explain in Remark \ref{remrem} that the conclusion of Theorem \ref{th:main} requires the supersolution inequality to hold {\it in the viscosity sense}, and not in an a.e. sense. 

The following  example is inspired  by \cite{AA} and \cite{CC}. Let us fix $p>1$ and $\gamma\in(1-1/q,1)$ where $q$  is as usual the conjugate exponent of $p$, i.e., $1/p+1/q=1$. We define 
\begin{equation}\label{defxi0}
\xi_0(t)=t^\gamma\qqf t\in[0,1]\,,
\end{equation}
\begin{equation*}
a(t,x)=\begin{cases}
G^{-1} & \mbox{if}\quad x=\xi_0(t)
\\
MG^{-1} & \mbox{if}\quad x\neq\xi_0(t)
\end{cases}\qqf (t,x )\in[0,1]\times \R\, ,
\end{equation*}
\begin{equation*}
g(x)=\begin{cases}
0 & \mbox{if}\quad x=1
\\
1 & \mbox{if}\quad x\neq 1
\end{cases}
\qqf x\in\R
\end{equation*}
where $M$ and $G$ are real numbers such that 
\begin{equation}\label{choixMG}
M\; >\; \frac{\gamma^q}{1-(1-\gamma)q}\qquad {\rm and }\qquad G\;>\;\frac{\gamma^q}{q(1-(1-\gamma)q)}\,.
\end{equation}
Note that $M>1$. Our aim is to show that ``the solution" $u$ to the Hamilton-Jacobi equation
\be\label{ex:u}
\begin{cases}\ds
-\partial_t u+{|Du|^p\over pa^{p-1}(t,x)}=0 &\mbox{in}\quad (0,1)\times \R
\vspace{.5mm}
\\
u(1,x)=g(x) & x\in\R
\end{cases}
\ee
is such that  $\partial_t u\notin L^{1+\vep}((0,1)\times (0,1))$ and $Du\notin L^{p(1+\vep)}((0,1)\times (0,1))$ for any
$\vep\geq \frac{\gamma(q+1)+1-q}{q(1-\gamma)}$. Since $a$ and $g$ are discontinuous, the meaning of the above equation is not clear. To overcome this issue,  we approximate $a$ and $g$ in a suitable way and show that the solutions of the approximate problem cannot be bounded in $W^{1,1+\vep}$. 

Let us now fix two sequences
\begin{equation*}
a_n:\R\times[0,1]\to\R\qmb{and}\qquad g_n:\R\to\R\qquad (n\ge 1)
\end{equation*}
of Lipschitz continuous functions such that
\begin{equation}\label{eq:a_n}
\begin{cases}
G^{-1} \le a_n(t,x)\le a(t,x)&\forall n\ge 1
\\
a_n(t,x)\uparrow a(t,x) & n\to\infty
\end{cases}\qqf (t,x)\in[0,1]\times \R
\end{equation}
and
\begin{equation}\label{eq:g_n}
\begin{cases}
0\le g_n(x)\le g(x) &\forall n\ge 1
\\
g_n(x)\uparrow g(x) & n\to\infty
\end{cases}\qqf x\in\R\,.
\end{equation}
(since $a$ and $g$ are lower semi-continuous, such approximating sequences can be built by inf-convolution). 

\begin{Proposition}\label{prop:ex} For any integer $n\ge 1$ let $a_n$ and $g_n$ satisfy \eqref{eq:a_n} and \eqref{eq:g_n}, respectively, and let $u_n$ be the Lipschitz continuous viscosity solution of 
\begin{equation}\label{defun}
\begin{cases}\ds
-\partial_t u+{|Du|^p\over p a^{p-1}_n(t,x)}=0 &\mbox{in}\quad (0,1)\times \R
\vspace{.5mm}
\\
u(1,x)=g_n(x) & x\in\R\,.
\end{cases}
\end{equation}
Then $\sup_n\|u_n\|_\infty\leq 1$ but, if $\vep\geq \frac{\gamma(q+1)+1-q}{q(1-\gamma)}$, the sequences  $(\partial_t u_n)$ and $(D u_n)$ are not bounded in $L^{1+\vep}((0,1)\times (0,1))$ and $L^{p(1+\vep)}((0,1)\times (0,1))$ respectively.
\end{Proposition}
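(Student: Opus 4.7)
The plan is to prove the result in three stages: establish uniform bounds and a Lax representation for $u_n$, pass to the monotone limit to identify the limit value $u$ and its sharp singularity above $\xi_0$, then extract the critical integrability exponent and conclude by weak compactness.

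For the first stage, the constants $0$ and $1$ are respectively a classical sub- and a supersolution of \eqref{defun} with $0\le g_n\le 1$, so viscosity comparison yields $\sup_n\|u_n\|_\infty\le 1$. Since the Legendre transform of $H(p)=|p|^p/(p a_n^{p-1})$ is $L_n(t,x,v)=a_n(t,x)|v|^q/q$, dynamic programming gives the representation
\[
u_n(t,x)=\inf\left\{\int_t^1 \frac{a_n(s,\gamma(s))}{q}|\dot\gamma(s)|^q\,ds+g_n(\gamma(1)):\gamma\in \mathrm{AC}([t,1]),\ \gamma(t)=x\right\}.
\]
Monotonicity $a_n\uparrow a$, $g_n\uparrow g$ makes $(u_n)$ pointwise non-decreasing and bounded above by the analogous value $u$ for $(a,g)$; a standard $\Gamma$-convergence argument (or monotone convergence of the action along a near-minimizer of the limit problem) yields $u_n\to u$ pointwise, hence in $L^1((0,1)^2)$ by dominated convergence.

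In the second stage, $\gamma>1-1/q$ guarantees $\dot\xi_0\in L^q(0,1)$ and the $\xi_0$-action equals $c_0:=G^{-1}\gamma^q/(q\alpha)$ with $\alpha:=1-(1-\gamma)q$, which is strictly less than $1$ by \eqref{choixMG}. For $(t,x)$ with $t^\gamma\le x\le 1$, the ``stay at $x$ on $[t,x^{1/\gamma}]$ (free since $\dot\gamma=0$), then follow $\xi_0$'' trajectory gives the upper bound $u(t,x)\le c_0(1-x^{\alpha/\gamma})$. This bound is not sharp: joining $\xi_0$ at $s=(1+\vep_0)x^{1/\gamma}$ for small $\vep_0>0$ introduces only $O(\vep_0^q x^{\alpha/\gamma})$ of extra action (the upward move has length $\sim\gamma\vep_0 x$ over time $\sim\vep_0 x^{1/\gamma}$) while saving $O(\vep_0 x^{\alpha/\gamma})$ downstream on $\xi_0$. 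Since $q>1$ the saving dominates for small $\vep_0$; optimizing in $\vep_0$ and using the strict inequality $M>\gamma^q/\alpha$ from \eqref{choixMG} produces $u(t,x)=c_0-\tilde c\,x^{\alpha/\gamma}+o(x^{\alpha/\gamma})$ with $\tilde c>c_0$. A matching lower asymptotic follows from the fact that $M>1$ strictly penalizes any deviation from this optimal strategy, so $|\partial_x u(t,x)|\gtrsim x^{\alpha/\gamma-1}=x^{-(1-\gamma)(q-1)/\gamma}$ on $\Omega:=\{(t,x):0<t<T,\,t^\gamma<x<R\}$ for some small $T,R>0$.

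For the third stage, since $p(q-1)=q$,
\[
\iint_\Omega |\partial_x u|^{p(1+\vep)}\,dx\,dt \gtrsim \int_0^T\int_{t^\gamma}^R x^{-\beta}\,dx\,dt,\qquad \beta:=\frac{q(1-\gamma)(1+\vep)}{\gamma}.
\]
For $\beta>1$ the inner integral is of order $t^{\gamma(1-\beta)}$, and $\int_0^T t^{\gamma(1-\beta)}\,dt=+\infty$ precisely when $\gamma(1-\beta)\le-1$, which rearranges exactly to $\vep\ge\vep^*:=\frac{\gamma(q+1)+1-q}{q(1-\gamma)}$. Hence $Du\notin L^{p(1+\vep)}((0,1)^2)$ for such $\vep$. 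If $\{Du_n\}$ were bounded in $L^{p(1+\vep)}$, reflexivity ($p(1+\vep)>1$) would extract a weakly convergent subsequence whose limit must coincide with $Du$ by $L^1$-convergence of $u_n$ to $u$---a contradiction. The $L^{1+\vep}$-statement for $\partial_t u_n$ follows identically using the a.e. identity $\partial_t u=|Du|^p/(pa^{p-1})$ on $\Omega$. The principal obstacle is the \emph{lower} asymptotic in the second stage: turning the explicit upper bound on $u$ into a matching lower one requires exploiting $M>\gamma^q/\alpha$ to produce a \emph{strictly} positive leading coefficient $\tilde c>c_0$, which is precisely what makes the exponent $\vep^*$ sharp.
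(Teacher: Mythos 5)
Your first and third stages are sound and essentially coincide with the paper's: the bound $\sup_n\|u_n\|_\infty\le 1$, the representation formula for $u_n$, the monotone convergence $u_n\to u$, the divergence computation of the critical exponent, and the final compactness step (boundedness of $(\partial_t u_n)$ or $(Du_n)$ in a reflexive $L^s$ would force $\partial_t u$, $Du$ into $L^s$, contradicting the singularity of the limit) are all exactly how the paper transfers the blow-up from $u$ to the approximations $u_n$. The gap is in your second stage. The paper never works with asymptotic expansions of $u$: it identifies the minimizers exactly --- first showing, via the choice of $G$ in \eqref{choixMG} and the strict concavity of $s\mapsto\ln(\rho^s-1)$ together with $M>\gamma^q/(1-(1-\gamma)q)$, that $\xi_0$ is the \emph{unique} minimizer from $(0,0)$, then that minimizers from points above the curve are of the form \eqref{xitheta} with optimal junction $\bar\theta=\sigma x_0^{1/\gamma}$ as in \eqref{defx0} --- and then differentiates the value along these minimizers to obtain the exact formulas $\partial_t u(t,x)=-Cx_0^{-q(1/\gamma-1)}$ and $|Du|=(pM)^{1/p}|\partial_t u|^{1/p}$. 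In your write-up the corresponding step, the ``matching lower asymptotic'' yielding $|\partial_x u(t,x)|\gtrsim x^{\alpha/\gamma-1}$ (with $\alpha:=1-(1-\gamma)q$), is only asserted (``follows from the fact that $M>1$ strictly penalizes any deviation''), and you yourself flag it as the principal obstacle. But that lower bound is the whole content of the example: one must bound from below the cost of \emph{every} competitor, including trajectories that leave and rejoin the graph of $\xi_0$ repeatedly or avoid it altogether, and this is precisely what the paper's minimizer analysis accomplishes; nothing in your text replaces it.

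Two further points would need repair even if the expansion were granted. First, an expansion $u(t,x)=c_0-\tilde c\,x^{\alpha/\gamma}+o(x^{\alpha/\gamma})$ does not give a pointwise bound on $\partial_x u$; at best it controls differences such as $u(t,x)-u(t,2x)$, and to make these of size $x^{\alpha/\gamma}$ you need upper and lower bounds whose constants are compatible at the scales $x$ and $2x$ --- which again amounts to the sharp two-sided asymptotics, i.e. to the minimizer analysis (an averaged bound plus Jensen would then indeed suffice for the divergence of the integral). Second, your heuristic for improving the trivial upper bound $u\le c_0(1-x^{\alpha/\gamma})$ is mis-stated: moving up by $\gamma\vep_0 x$ over a time of order $\vep_0x^{1/\gamma}$ costs of order $MG^{-1}\gamma^q\vep_0\,x^{\alpha/\gamma}/q$, i.e. linear in $\vep_0$, not $O(\vep_0^q x^{\alpha/\gamma})$, and since $M>1$ this exceeds the claimed saving; to obtain the $O(\vep_0^q)$ cost one must spread the displacement over a time comparable to $x^{1/\gamma}$, which also forces you to restrict to a subregion such as $t\le x^{1/\gamma}/2$ where the improvement is uniform. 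These last items are fixable, but the missing lower bound is not a detail: as written, the proposal does not establish that $\partial_t u\notin L^{1+\vep}$ and $Du\notin L^{p(1+\vep)}$, which is the heart of the proposition.
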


Note that the quantity $\frac{\gamma(q+1)+1-q}{q(1-\gamma)}$ is positive, but arbitrarily small when $\gamma$ is close to $1-1/q$ and $q$ is close to $1$ (i.e., $p$ large). This shows that the $\vep$ which appears in Theorem \ref{th:main} depends on the coefficients $\bar C$ and $p$ and can be arbitrarily small.  

\begin{proof}
Let $u:[0,1]\times \R\to \R$ be the value function of the problem
\begin{equation*}\label{eq:u}
u(t,x)=\inf\left\{J[\xi, t, x]\ :\quad \xi\in W^{1,p}([t,1])\,,\;\xi(t)=x\right\}
\end{equation*}
where 
$$
J[\xi, t, x]=\int_t^1\frac{a(\xi(s),s)}{q}|\xi'(s)|^qds+g(\xi(1))\quad \forall(t,x)\in(0,1)\times \R\,,\; \xi\in W^{1,q}([t,1])\,,\;\xi(t)=x\,.
$$
We note that $u$ is---at least formally---a solution to \eqref{ex:u}. 
The main part of the proof consists in showing that $\partial_t u\notin L^{1+\vep}((0,1)\times (0,1))$ and $Du\notin L^{p(1+\vep)}((0,1)\times (0,1))$ for any $\vep\geq \frac{\gamma(q+1)+1-q}{q(1-\gamma)}$. For this we analyze optimal solutions of the problem.

We first claim that  the curve $\xi_0$ defined in \eqref{defxi0} is the unique minimizer for $u(0,0)$. For this, let $\xi$ be an optimal trajectory for $J[\cdot, 0,0]$. Assume for a while that $\xi(1)\neq \xi_0(1)$. Then let $t_1$ be the largest time such that $\xi(t)= \xi_0(t)$. By optimality of $\xi$, we have
$$
\int_{t_1}^1 \frac{a(t,\xi(t))}{q}|\xi'(t)|^qdt+g(\xi(1)) \leq \int_{t_1}^1  \frac{a(t,\xi_0(t))}{q}|\xi_0'(t)|^qdt+g(\xi_0(1)), 
$$
which implies that
$$
1\; =\; g(\xi(1))\; \leq \; \int_{t_1}^1  \frac{G^{-1}}{q}|\gamma t^{\gamma-1}|^qdt \; \leq \; G^{-1}\frac{\gamma^q}{q(1-(1-\gamma)q)}.
$$
This is impossible from the choice of $G$ in \eqref{choixMG}. 
Let us now assume that $\xi\neq \xi_0$ and let $(t_1,t_2)$ be a maximal interval on which $\xi\neq \xi_0$. As $\xi(0)=0=\xi_0(0)$ and $\xi(1)= \xi_0(1)$, we must have $\xi(t_1)=\xi_0(t_1)=t_1^\gamma$ and $\xi(t_2)=\xi_0(t_2)=t_2^\gamma$. Since  the map $(t,x)\to a(t,x)$ is constant outside the graph of $\xi_0$, $\xi$ must be a straight line between $t_1$ and $t_2$. Hence, by optimality,
$$
\begin{array}{rl}
\ds \frac{MG^{-1}}{q} \frac{(t_2^\gamma-t_1^\gamma)^q}{(t_2-t_1)^{q-1}}\; = & \ds \int_{t_1}^{t_2} \frac{a(t,\xi(t))}{q}|\xi'(t)|^qdt \\
\leq &\ds   \int_{t_1}^{t_2}  \frac{a(t,\xi_0(t))}{q}|\xi_0'(t)|^qdt \; =\; G^{-1}\frac{\gamma^q}{q}\frac{(t_2^{1-(1-\gamma)q}-t_1^{1-(1-\gamma)q})}{1-(1-\gamma)q}.
\end{array}
$$
Setting $\rho= t_2/t_1>1$, we get: 
$$
\frac{M(1-(1-\gamma)q)}{ \gamma^q}\leq \frac{(\rho^{1-(1-\gamma)q}-1)(\rho-1)^{q-1}}{(\rho^\gamma-1)^q}.
$$
As the map $s\to \ln(\rho^s-1)$ is strictly concave on $(0,+\infty)$ (for $\rho>1$), the right-hand side of the above inequality is less than 1. This is in contradiction with the choice of $M$ in \eqref{choixMG}. So $\xi_0$ is optimal for $J$.

Next we compute the optimal trajectories for $u(0,x_0)$, where $x_0\in(0,1)$.
We note that such a minimizer must be of the form 
\begin{equation}\label{xitheta}
\xi_{\theta,x_0} (t)= \begin{cases}\ds
\frac{\theta^\gamma-x_0}{ \theta}t+x_0 &\mbox{in}\quad [0,\theta]
\vspace{.5mm}
\\
t^\gamma & \mbox{in}\quad (\theta, 1]
\end{cases}\qqf t\in[0,1],
\end{equation}
where $\theta$ minimizes 
$$
J[\xi_{\theta,x_0} ]= M \frac{|\theta^\gamma-x_0|^p}{q\theta^{p-1}}+ \int_\theta^1 \frac{1}{q}(\gamma t^{\gamma-1})^q dt.
$$
Computing the values where the derivative of this expression vanishes shows that the optimum is uniquely reached at a point of the form 
$\bar \theta= \sigma x_0^{1/\gamma}$, where $\sigma>1$ depends on $q$, $M$ and $\gamma$. \\

We now study minimizers for $u(t,x)$ where $(t,x)\in (0,1)\times (0,1)$ with $x>t^\gamma$. Since the  family of  curves $\xi_{\bar \theta, x_0}$, for $x_0\in (0,1)$ and $\bar \theta= \sigma x_0^{1/\gamma}$, covers the set $O= \{(t,x)\;|\; 0<t<x^{1/\gamma}<1\}$, the optimal trajectory for $u(t,x)$ is just the restriction to $[t,1]$ of the trajectory $\xi_{\bar \theta, x_0}$ for the unique $x_0\in (0,1)$ such that $\xi_{\bar \theta, x_0}(t)=x$, i.e., such that 
\be\label{defx0}
x=\frac{\bar \theta^\gamma-x_0}{ \bar\theta}t+x_0,\qquad \bar \theta =\sigma x_0^{1/\gamma}.
\ee
In particular, if we set 
\begin{equation*}
\xi_{ \theta,t,x} (s)= \begin{cases}\ds
\frac{ \theta^\gamma-x}{ \theta-t }(s-t)+x &\mbox{in}\quad [t,  \theta]
\vspace{.5mm}
\\
s^\gamma & \mbox{in}\quad ( \theta, 1]
\end{cases}\qqf s\in[t,1]
\end{equation*}
then 
$$
u(t,x)= \inf_{\theta\in (0,1)} J[\xi_{ \theta,t,x}, t,x] = \inf_{\theta\in (0,1)}  M\frac{|\theta^\gamma-x|^q}{q(\theta-t)^{q-1}}+ \int_\theta^1\frac{1}{q}(\gamma t^{\gamma-1})^q dt ,
$$
and the unique optimum is reached by $\bar \theta=\sigma x_0^{1/\gamma}$ with $x_0$ such that \eqref{defx0} holds. 
By standard arguments, we have
$$
\partial_t u(t,x)=\frac{\partial}{\partial t}J[\xi_{\bar \theta}, t, x]\, ,\quad D u(t,x)=\frac{\partial}{\partial x}J[\xi_{\bar \theta}, t, x]\, .
$$
An easy computation shows that
$$
\partial_t u(t,x)=- \frac{M(q-1)}{q} \frac{(\bar \theta^\gamma-x)^q}{(\bar \theta-t)^q}= -Cx_0^{-q(1/\gamma-1)}
$$
where $C>0$ depends on $q$,  $M$ and $\gamma$.

As, from \eqref{defx0}, the map $t\to x_0$ is decreasing on $(0, x^{1/\gamma})$  for any $x\in (0,1)$, the map $t\to |\partial_t u(t,x)|$ is increasing on $(0, x^{1/\gamma})$. Recalling the notation $O= \{(t,x)\;|\; 0<t<x^{1/\gamma}<1\}$, we have  
$$
\begin{array}{rl}
\ds \int_{(0,1)\times (0,1)} |\partial_t u(t,x)|^{1+\vep}\; \geq & \ds  \int_{O} |\partial_t u(t,x)|^{1+\vep}\; \geq\;  \int_{O} |\partial_t u(t,0)|^{1+\vep} \\
 = & \ds \int_0^1 \int_0^{x_0^{1/\gamma}} Cx_0^{-q(1/\gamma-1)(1+\vep)}dtdx_0\;= \; \ds  \int_0^1 Cx_0^{-q(1/\gamma-1)(1+\vep)+1/\gamma}dx_0,\\
\end{array}
$$
where the right-hand side diverges if $\vep\geq \frac{\gamma(q+1)+1-q}{q(1-\gamma)}$. So, if $\vep\geq \frac{\gamma(q+1)+1-q}{q(1-\gamma)}$, then  $\partial_t u\notin L^{1+\vep}((0,1)\times (0,1))$ and $Du\notin L^{p(1+\vep)}((0,1)\times (0,1))$ since 
$$
|Du(t,x)|= \left(pM\right)^{1/p}\left|\partial_tu(t,x)\right|^{1/p}\qquad {\rm in }\; O.
$$

To conclude, we note that $u_n$ defined in \eqref{defun} is given by the representation formula: 
\begin{equation*}\label{eq:un}
u_n(x,t)=\inf\left\{\int_t^1\frac{a_n(s,\xi(s))}{q}|\xi'(s)|^qds+g_n(\xi(1))\ : \xi\in W^{1,q}([t,1])\,,\;\xi(t)=x\right\}\,.
\end{equation*}
In particular $(u_n)$ converges uniformly to $u$. Fix  $\vep\geq \frac{\gamma(q+1)+1-q}{q(1-\gamma)}$. Then, as $\partial_tu \notin L^{1+\vep}((0,1)\times (0,1))$ (resp. $Du\notin  L^{p(1+\vep)}((0,1)\times (0,1))$), the sequence $(\partial_t u_n)$ (resp. $(Du_n)$) cannot be bounded in $L^{1+\vep}((0,1)\times (0,1))$ (resp.  in  $L^{p(1+\vep)}((0,1)\times (0,1))$). 
\end{proof}

\begin{Remark}\label{remrem}{ \rm
It is important to note that the Sobolev estimates obtained in this article are not true in general for a.e. solutions of (\ref{HJ0}). Indeed,  let us consider  $H$ and $f$  two continuous functions  such that $\xi\to H(t,x,\xi)$ is a convex function which is coercive in a direction $\lambda\in \R^d$, i.e.  for every  $x\in Q_1$ and every bounded set $K$ of $\R\times\R^d$, there exist constants $m,c>0$, such that
$$
H(t,x,\xi+s\lambda)\geq m |s|- c
$$ 
for every $s\in\R,x\in Q_1$ and for every $(t,\xi)\in K$. Then, the set of solutions of
$$
-\partial_t u +H(t,x,Du)=f(t,x)\quad \text{ a.e.  in }  (0,1)\times Q_1
$$ 
is dense, in the $L^\infty$ norm, in the set of subsolutions of
\be\label{esubsol}
-\partial_t u +H(t,x,Du)\leq f(t,x)\quad \text{ a.e.  in }  (0,1)\times Q_1\; .
\ee
This result follows from  Theorem 2.3 in \cite{DaMa}.

Suppose now that the Hamiltonian $H$ satisfies in addition growth conditions (\ref{igrowth}).
Thanks to the above result, if Theorem  \ref{th:main} were true for a.e. solutions then the same should hold for a.e. subsolutions of (\ref{esubsol}). In particular, an a.e. subsolution $u$ of (\ref{HJ0}) should be in $W^{1,1}_{loc}((0,1)\times Q_1)$. However, there exist a.e. subsolutions that are just functions of bounded variation and whose derivatives are singular measures.

For example, consider the function
$$
u(t,x):=\left\{ \begin{array}{cc}
0 & (t,x)\in (0,\frac{1}{2})\times Q_1 \\
1 & (t,x)\in (\frac{1}{2},1)\times Q_1.
\end{array} \right.
$$
Then, for all  $H:(0,1)\times\R^d\times\R^d\to\R$  continuous, that  satisfy (\ref{igrowth}) and  such that $\xi\to H(t,x,\xi)$ is convex and coercive in a direction $\lambda$,  we have that $u$ satisfies
$$
-\partial_t u +H(t,x,Du)\leq \bar C
$$ 
in the sense of distributions. Nevertheless, $u$ cannot be in $W^{1,1}_{loc}((0,1)\times Q_1)$, since it is of bounded variation and $\partial_t u$ is a singular measure. 
 
This reflects the fact that  Sobolev regularity is specific to viscosity solutions of Hamilton-Jacobi equations.

}
\end{Remark}

\section{Application to Mean Field Games}\label{sec:MFG}

In this section we apply our main result to first order Mean Field Games systems (MFG). The system studied in this section takes the form:
\be\label{MFG}
\left\{\begin{array}{cl}
(i)&- \partial_t u +H(x,Du) =g(x,m(t,x))\qquad {\rm in }\; (0,T)\times \T^d\\
(ii) & \partial_t m-{\rm div} (mD_pH(x,Du))=0\qquad {\rm in }\; (0,T)\times \T^d\\
(iii)& m(0,x)=m_0(x), \; u(T,x)=u_T(x)\qquad {\rm in }\;  \T^d
\end{array}\right.
\ee
where $\T^d= \R^d/\Z^d$ is $d-$dimensional torus, the Hamiltonian $H:\T^d\times \R^d\to\R$ is convex in the second variable, the coupling
$g:\T^d\times [0,+\infty)\to [0,+\infty)$ is increasing with respect to the second variable, $m_0$ is a smooth probability density and $u_T:\T^d\to\R$ is a smooth given function.  

Let us briefly recall that the MFG systems were introduced by Lasry and Lions in \cite{lasry06, lasry06a, lasry07} (see also the works by Huang, Caines and Malhamé \cite{huang2007large} for closely related approach) to describe differential games with infinitely many agents. In this system equation \eqref{MFG}-(i) is the equation for the value function $u$ of a typical small agent whose cost depends on the point-wise density of all the agents through the (local) coupling $g$. The evolution equation \eqref{MFG}-(ii) describes the evolution of the agents when they control their own dynamics in an optimal way. In \cite{lions07} the authors explain that system \eqref{MFG} has smooth solutions under strong structure and regularity conditions on the data. 

In \cite{cg} (see also \cite{c1}) it is proved that, under less demanding assumptions on the data, the MFG system has a unique weak solution. Our goal is to show that the equation is actually satisfied in a stronger sense than stated in \cite{cg}. Let us first state our hypothesis.  
\begin{itemize}
\item the coupling $g:\T^d\times [0,+\infty)\to \R$ is continuous in both variables, strictly increasing with respect to the second variable $m$, 
and there exist $r>1$  and $C_1$  such that 
\be\label{Hypf}
 \frac{1}{C_1}|m|^{r'-1}-C_1\leq g(x,m) \leq C_1 |m|^{r'-1}+C_1 \qquad \forall m\geq 0 \;.
\ee
where $r'$ is the conjugate exponent of $r$. 
Moreover we ask the following normalization condition to hold:
\be\label{Hypf(0,m)=0}
g(x,0)=0 \qquad \forall x\in \T^d\;.
\ee

\item The Hamiltonian  $H:\T^d\times \R^d\to\R$ is continuous in both variables, convex and differentiable in the second variable, with $D_pH$ continuous in both variable, and has a superlinear growth in the gradient variable: there exist $p>0$ and  $C_2>0$ such that $r> 1+ d/p$
and
\be\label{HypGrowthH}
\frac{1}{pC_2} |\xi|^{p} -C_2\leq H(x,\xi) \leq \frac{C_2}{p}|\xi|^p+C_2\qquad \forall (x,\xi)\in \T^d\times \R^d\;.
\ee

\item $u_T:\T^d\to \R$ is of class ${\mathcal C}^1$, while $m_0:\T^d\to \R$ is a continuous density, with $m_0\geq 0$ and $\ds \int_{\T^d} m_0dx=1$. 
\end{itemize} 

As usual we denote by $q$ the conjugate exponent of $p$. Here is the main existence/uniqueness result of \cite{cg}. 

\begin{Theorem}[\cite{cg}]\label{def:weaksolMFG} There is a unique weak solution of \eqref{MFG}, i.e., a unique pair $(m, u)\in L^r((0,T)\times\T^d) \times BV((0,T)\times\T^d)$ such that
\begin{itemize}
\item[(i)] $u$ is continuous in $[0,T]\times \T^d$, with  
$$\ds Du\in L^p, \; \ds mD_pH(x,Du)\in L^1
\; {\rm and}\; 
\; \left(\partial_tu^{ac}- \lg Du, D_pH(x,Du)\rg\right)m \in L^1\;.$$ 

\item[(ii)] Equation \eqref{MFG}-(i) holds in the following sense:
\be\label{eq:ae}
\ds \quad -\partial_t u^{ac}(t,x) +H(x,Du(t,x))= g(x,m(t,x)) \quad \; \mbox{\rm a.e. in $\{m>0\}$}
\ee
(where $\partial_t u^{ac}$ is the absolutely continuous part of the measure $\partial_t u$ with respect to the Lebesgue measure)
 and  inequality
\be\label{eq:distrib}
\quad -\partial_t u +H(x,Du)\leq  g(x,m) \quad {\rm in}\; (0,T)\times \T^d 
\ee holds in the sense of distributions, with $u(T,\cdot)=u_T$ in the sense of trace,  

\item[(iii)] Equation \eqref{MFG}-(ii) holds:  
\be\label{eqcontdef}
\ds \quad \partial_t m-{\rm div}( mD_pH(x,Du))= 0\quad {\rm in }\; (0,T)\times \T^d, \qquad m(0)=m_0
\ee
in the sense of distributions,

\item[(iv)] The following equality holds: 
\be\label{defcondsup} 
\int_0^T\int_{\T^d} m\left(\partial_tu^{ac}- \lg Du, D_pH(x,Du)\rg \right)= \int_{\T^d} m(T)u_T-m_0u(0). 
\ee 
\end{itemize}
By uniqueness we mean that $m$ is indeed unique and $u$ is uniquely defined in $\{m>0\}$.
\end{Theorem}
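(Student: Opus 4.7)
I would follow the variational duality approach of \cite{cg} to construct the weak pair $(u,m)$, and then combine it with the Sobolev regularity of Theorem \ref{th:main} and the a.e.\ differentiability of Proposition \ref{AeDiff} to upgrade the distributional inequality \eqref{eq:distrib} to the pointwise identity \eqref{eq:ae} on $\{m>0\}$. Uniqueness then follows by the classical Lasry--Lions monotonicity argument.

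\textbf{Existence by duality.} With $F(x,m):=\int_0^m g(x,s)\,ds$, $F^*$ its Legendre transform in $m$, and $L:=H^*$, I would set up two convex problems in Fenchel duality. The primal problem is to minimize
$$
\mathcal{A}(u) := \int_0^T\!\!\int_{\T^d} F^*\bigl(x,-\partial_t u + H(x,Du)\bigr) - \int_{\T^d} u(0,x)\,m_0(x)\,dx
$$
over Sobolev subsolutions with $u(T,\cdot)\le u_T$; the dual problem is to minimize
$$
\mathcal{B}(m,w) := \int_0^T\!\!\int_{\T^d} \bigl[ m\,L(x,-w/m) + F(x,m) \bigr] + \int_{\T^d} u_T(x)\,m(T,x)\,dx
$$
over pairs $(m,w)$ satisfying $\partial_t m + \dive w = 0$ with $m(0)=m_0$. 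Existence for $\mathcal{B}$ follows from the direct method, the continuity-equation constraint being closed under weak convergence and the functional being coercive thanks to \eqref{Hypf}--\eqref{HypGrowthH}. Existence for $\mathcal{A}$ uses that minimizing sequences satisfy \eqref{ineq:subsol}--\eqref{ineq:supersol}: Lemma \ref{lem:distrib} and Theorem \ref{th:main} then give uniform $W^{1,1+\varepsilon}$ bounds allowing one to pass to the limit. Fenchel--Rockafellar duality then yields $\min\mathcal{A} + \min\mathcal{B} = 0$, identifies $w = -m\,D_pH(x,Du)$, and produces simultaneously the continuity equation \eqref{eqcontdef}, the distributional inequality \eqref{eq:distrib} with $u(T,\cdot)=u_T$, and the energy equality \eqref{defcondsup} as the statement that the duality gap vanishes.

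\textbf{From distributional to a.e.\ identity.} This is where the new results of this paper enter. Since $g(\cdot,m)\in L^r$ by \eqref{Hypf} and $r>1+d/p$, the function $u$ satisfies the hypotheses of Theorem \ref{th:main} and Proposition \ref{AeDiff}: it is differentiable almost everywhere, $\partial_t u$ is absolutely continuous, and $\partial_tu^{ac}$, $H(x,Du)$ both lie in $L^{1+\varepsilon}_{loc}$. Hence \eqref{eq:distrib} is an a.e.\ pointwise inequality. Testing against $m$ and using \eqref{defcondsup} together with the continuity equation \eqref{eqcontdef} gives
$$
\int_0^T\!\!\int_{\T^d} m\bigl[-\partial_t u^{ac} + H(x,Du) - g(x,m)\bigr] \,\le\, 0,
$$
while pointwise Fenchel's inequality $H(x,Du) + L(x,-w/m)\ge \langle Du, -w/m\rangle$ combined with $F(x,m)+F^*(x,\alpha)\ge m\alpha$ yields the reverse integral inequality. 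The integrand therefore vanishes a.e.\ on $\{m>0\}$, which is exactly \eqref{eq:ae}.

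\textbf{Uniqueness and main obstacle.} If $(u_1,m_1)$ and $(u_2,m_2)$ are two weak solutions, testing the difference of \eqref{eq:ae} against $m_1-m_2$ and the difference of the continuity equations against $u_1-u_2$, and using \eqref{defcondsup} for both solutions, one arrives at
$$
\int_0^T\!\!\int_{\T^d}\bigl(g(x,m_1)-g(x,m_2)\bigr)(m_1-m_2) + \sum_{i=1,2}\int_0^T\!\!\int_{\T^d} m_i\bigl[H(x,Du_{3-i})-H(x,Du_i) - \langle D_pH(x,Du_i), Du_{3-i}-Du_i\rangle\bigr] = 0,
$$
both contributions being nonnegative by convexity of $H(x,\cdot)$ and strict monotonicity of $g(x,\cdot)$. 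Hence $m_1 = m_2$ and $Du_1 = Du_2$ a.e.\ on $\{m>0\}$, so $u$ is uniquely determined there by \eqref{eq:ae}. The main obstacle is precisely the upgrade from the distributional inequality to the pointwise identity: without the Sobolev regularity and a.e.\ differentiability proved in this paper, the terms $\partial_tu^{ac}$ and $H(x,Du)$ would not simultaneously be well-defined as functions, the pointwise Fenchel identification would fail, and one could only obtain \eqref{eq:distrib} in the distributional sense, as in \cite{cg}.
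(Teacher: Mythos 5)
First, note that the paper does not prove this statement at all: Theorem \ref{def:weaksolMFG} is quoted verbatim from \cite{cg} and is used as an \emph{input} for the Corollary of Section \ref{sec:MFG}, where the present paper's new results (Theorem \ref{th:main}, Proposition \ref{AeDiff}) are applied on top of it. Your duality skeleton (Fenchel--Rockafellar between the subsolution functional and the functional in $(m,w)$ constrained by the continuity equation, identification $w=-mD_pH(x,Du)$, energy identity \eqref{defcondsup} as vanishing duality gap, uniqueness by the Lasry--Lions monotonicity argument) is indeed the route of \cite{cg}, so the overall architecture is the right one for the cited result.

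There is, however, a genuine gap in the step you single out as the crucial one. You claim that the upgrade to the a.e.\ identity \eqref{eq:ae} on $\{m>0\}$ requires Theorem \ref{th:main} and Proposition \ref{AeDiff}, asserting that otherwise ``$\partial_t u^{ac}$ and $H(x,Du)$ would not simultaneously be well-defined as functions.'' This is both unnecessary and unavailable. It is unnecessary because the theorem is deliberately phrased at BV regularity: $\partial_t u^{ac}$ is the density of the absolutely continuous part of the BV measure $\partial_t u$, and $H(x,Du)$ makes sense since $Du\in L^p$; in \cite{cg} the identity \eqref{eq:ae} is extracted from the vanishing duality gap and the pointwise Fenchel equality conditions entirely within this BV framework, with no Sobolev regularity. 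It is unavailable because Theorem \ref{th:main} requires a \emph{continuous} right-hand side, whereas $g(x,m)\in L^r$ only, and Proposition \ref{AeDiff} requires $u\in W^{1,1}$ together with the viscosity supersolution inequality \eqref{subsoll} --- a condition which is \emph{not} part of the notion of weak solution and is precisely the extra requirement imposed later to get global (rather than on-$\{m>0\}$) uniqueness; parts (i)--(iv) and the uniqueness claim must hold for arbitrary weak solutions, to which your regularity argument does not apply. A related weak point is the uniqueness step: the cross-testing of \eqref{eq:ae} against $m_1-m_2$ and of the continuity equations against $u_1-u_2$ is only formal at this regularity ($u\in BV$, $m\in L^r$, $mD_pH(x,Du)\in L^1$), and making it rigorous is exactly where \cite{cg} relies on the variational characterization of the solutions rather than direct integration by parts. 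Finally, your existence argument for the primal problem asserts that minimizing sequences satisfy \eqref{ineq:subsol}--\eqref{ineq:supersol} and hence enjoy uniform $W^{1,1+\vep}$ bounds; generic competitors are only subsolutions, so this holds only for a specially constructed minimizing sequence (as in the Corollary of Section \ref{sec:MFG}), and even then it yields the stronger conclusion of that Corollary, not a shortcut for the theorem itself.
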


In \cite{cg} the above result is stated under more general conditions.  Under the above assumptions, it is explained in Remark 3.7 of  \cite{cg} (see also \cite{c1}) that $u$ is Hölder continuous. It is also globally unique (not only in $\{m>0\}$) if one requires the additional condition 
\be\label{subsoll}
-\partial_t u+H(x,Du)\geq 0\qquad {\rm in}\; (0,T)\times \T^d
\ee
in the viscosity sense. 

Theorem \ref{th:main}   implies that $u$  actually belongs to  a Sobolev space: 
 
\begin{Corollary} Let $(u,m)$ be the unique weak solution of \eqref{MFG} which satisfies \eqref{subsoll}. Then $u$ belongs to $W^{1,1}_{loc}((0,T)\times \T^d)$, $u$ is differentiable almost everywhere and the following equality holds:
$$
\ds -\partial_t u(t,x) +H(x,Du(t,x))= g(x,m(t,x)) \quad \; \mbox{\rm a.e. in $(0,T)\times \T^d$}.
$$
\end{Corollary}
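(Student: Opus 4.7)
The plan is to verify that the weak solution $(u,m)$ from Theorem \ref{def:weaksolMFG} falls under the scope of the regularity machinery developed in the paper, and then to combine the resulting a.e. pointwise regularity with the available identities on $\{m>0\}$ and $\{m=0\}$ separately.

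First I would rewrite the MFG inequalities in the normalized form of \eqref{ineq:subsol}--\eqref{ineq:supersol}. By \eqref{HypGrowthH} together with \eqref{eq:distrib} and \eqref{subsoll}, the map $u$ (which is continuous on $[0,T]\times \T^d$ and has $Du\in L^p$ by Theorem \ref{def:weaksolMFG}(i)) satisfies, with $\bar C$ depending only on $C_2$ and $p$,
\[
-\partial_t u + \tfrac{1}{\bar C}|Du|^p \leq f(t,x) \quad \text{in the sense of distributions}, \qquad -\partial_t u + \bar C|Du|^p \geq -\bar C \quad \text{in the viscosity sense},
\]
where $f(t,x):=g(x,m(t,x))+\bar C$. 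Thanks to \eqref{Hypf} and the fact that $m\in L^{r'}$ (which comes from the energy identity associated with \eqref{defcondsup} in \cite{cg}), $f$ is nonnegative and belongs to $L^r_{loc}$ with $r>1+d/p$. Using periodicity one may restrict attention to space-time subcubes of $(0,T)\times \T^d$, so the geometric setup of Theorem \ref{th:main} applies.

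The next step is to upgrade $u\in BV$ to $u\in W^{1,1}_{loc}$. The right-hand side $f$ is only in $L^r$, not continuous, so Theorem \ref{th:main} cannot be applied verbatim. I would approximate $g$ by a sequence of continuous couplings $g_n\uparrow g$, let $(u_n,m_n)$ be the corresponding weak solutions (which produce continuous right-hand sides $f_n=g_n(\cdot,m_n)+\bar C$), and apply Theorem \ref{th:main} to each $u_n$ to get $W^{1,1+\vep_0}$ estimates uniform in $n$. Passing to the limit via the stability result of \cite{cg} yields $u\in W^{1,1}_{loc}((0,T)\times\T^d)$ with $\partial_tu\in L^{1+\vep_0}_{loc}$ and $Du\in L^{p(1+\vep_0)}_{loc}$, and in particular the measure $\partial_tu$ is absolutely continuous so $\partial_tu=\partial_tu^{ac}$ a.e. Once this is known, Proposition \ref{AeDiff} applies directly to $u$ (with the $L^r$ right-hand side $f$) and delivers the a.e. differentiability.

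It remains to extend the pointwise equation \eqref{eq:ae} from $\{m>0\}$ to the complement. On $\{m>0\}$, identity \eqref{eq:ae} together with $\partial_tu=\partial_tu^{ac}$ a.e.\ gives $-\partial_tu+H(x,Du)=g(x,m)$ a.e. On $\{m=0\}$ the normalization \eqref{Hypf(0,m)=0} gives $g(x,0)=0$, so the distributional subsolution inequality \eqref{eq:distrib}, which by absolute continuity holds a.e., reads $-\partial_tu+H(x,Du)\leq 0$ a.e.\ on $\{m=0\}$. The opposite inequality follows from \eqref{subsoll}: as noted right after Proposition \ref{AeDiff}, once $u$ is differentiable a.e.\ a viscosity supersolution of a first order HJ equation satisfies the inequality a.e., giving $-\partial_tu+H(x,Du)\geq 0$ a.e. Combining, $-\partial_tu+H(x,Du)=0=g(x,0)$ a.e.\ on $\{m=0\}$, which finishes the proof. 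The main obstacle is the $W^{1,1}_{loc}$ step, because the natural right-hand side $g(x,m)$ lacks the continuity hypothesized in Theorem \ref{th:main}; the approximation sketched above, combined with the uniform Sobolev bounds, is what bridges this gap.
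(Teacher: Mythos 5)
Your second half is exactly the paper's argument: once $u\in W^{1,1}_{loc}$ with $Du\in L^p$, Proposition \ref{AeDiff} (which only needs \eqref{eq:distrib} distributionally and \eqref{subsoll} in the viscosity sense) gives a.e.\ differentiability, and the equation follows by combining \eqref{eq:ae} on $\{m>0\}$ with the a.e.\ form of \eqref{eq:distrib}, the pointwise validity of \eqref{subsoll} at differentiability points, and the normalization \eqref{Hypf(0,m)=0} on $\{m=0\}$. The genuine gap is in your $W^{1,1}_{loc}$ step. Approximating the coupling $g$ by continuous couplings $g_n$ does not achieve anything: $g$ is already continuous by hypothesis, and the reason $f=g(\cdot,m)+\bar C$ fails to be continuous is the density $m$, which is merely integrable. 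The weak solutions $(u_n,m_n)$ of the modified MFG systems would again have $m_n$ only in a Lebesgue space, so $f_n=g_n(\cdot,m_n)+\bar C$ is still not continuous and Theorem \ref{th:main} does not apply to $u_n$ any more than to $u$; moreover, \eqref{eq:distrib} for such weak solutions is only known in the distributional sense, not the viscosity sense required by Theorem \ref{th:main}, and the ``stability result of \cite{cg}'' you invoke to ensure $(u_n,m_n)\to(u,m)$ is not available in the form you need.

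The paper bridges this differently, using the variational construction of $u$ from \cite{c1,cg}: $u$ is the uniform limit of a minimizing sequence $(u_n)$ of $\mathcal{C}^1$ functions for the dual optimization problem, and one can arrange that $\alpha_n:=-\partial_t u_n+H(x,Du_n)$ is nonnegative, continuous (because $u_n$ is $\mathcal{C}^1$) and bounded in $L^r$, with $(u_n)$ uniformly bounded. Each $u_n$ then satisfies \eqref{ineq:subsol}--\eqref{ineq:supersol} classically, hence in the viscosity sense, with the continuous right-hand side $\alpha_n+\bar C$, so Theorem \ref{th:main} yields local bounds on $\partial_t u_n$ in $L^{1+\vep}$ and on $Du_n$ in $L^{p(1+\vep)}$ that are uniform in $n$ (they depend only on $\|\alpha_n\|_r$ and $\|u_n\|_\infty$), and weak compactness gives $u\in W^{1,1}_{loc}$ in the limit. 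To repair your proof you should replace the MFG-level approximation by such an approximation of $u$ itself by smooth functions with controlled, continuous, nonnegative right-hand sides; as written, your key step would fail.
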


In particular equation \eqref{MFG}-(i) is satisfied in a strong sense. 

\begin{proof} Let us set 
$$
G(x,m)= \left\{\begin{array}{ll}
\ds  \int_0^m g(x,\rho)d\rho & {\rm if } \; m\geq 0\\
 +\infty & {\rm otherwise}
 \end{array}\right.
 $$
 and let $G^*$  the convex conjugate of $G$ with respect to the second variable, i.e., 
$$
G^*(x,a) = \sup_{m\geq 0} am-G(m).
$$
In \cite{c1, cg} the construction of the $u-$component of the solution is obtained as follows. Let $(u_n)$ be a minimizing sequence of the following problem: 
$$
\inf \int_0^T\int_{\T^d} G^*(x,-\partial_tw(t,x)+H(x,Dw(t,x)))\ dxdt - \int_{\T^d} m_0(x) w(0,x)dx
$$
where the infimum is taken over all ${\mathcal C}^1$ maps $w$ such that $w(T,\cdot)=u_T$. Let us set 
$$
\alpha_n(t,x):=-\partial_t u_n(t,x)+H(x,Du_n(t,x)). 
$$
Then it is proved in \cite{c1, cg} that one can build $\alpha_n$ in such a way that $\alpha_n\geq 0$, $(\alpha_n)$ is bounded in $L^r$ and $(u_n)$ is uniformly bounded. Moreover $(u_n)$ uniformly converges to the $u-$component of the solution of \eqref{MFG}. 

In view of the above estimates on $(\alpha_n)$ and $(u_n)$,  our main result Theorem \ref{th:main} implies that, locally in $(0,T)\times \T^d$, there exists $\vep>0$ such that $(\partial_t u_n)$ is bounded in $L^{1+\vep}$ while $Du_n$ is bounded in $L^{p(1+\vep)}$. By weak convergence, $u$ is still in $W^{1,1}$. 

Then, by \eqref{eq:ae}, the following equality holds 
$$
\ds -\partial_t u(t,x) +H(x,Du(t,x))= g(x,m(t,x)) \quad \; \mbox{\rm a.e. in $\{m>0\}$}.
$$
As $u$ satisfies  \eqref{eq:distrib} in the sense of distributions and \eqref{subsoll} in the viscosity sense,   Proposition \ref{AeDiff} implies that $u$ is a.e. differentiable. Hence inequalities
$$
\ds -\partial_t u(t,x) +H(x,Du(t,x))\leq  g(x,m(t,x)) \quad \; \mbox{\rm a.e. in $(0,T)\times \T^d$}
$$
and 
$$
-\partial_t u(t,x) +H(x,Du(t,x))\geq 0 \quad \; \mbox{\rm a.e. in $(0,T)\times \T^d$}
$$
hold. As $g(x,0)=0$, we finally get 
$$
\ds -\partial_t u(t,x) +H(x,Du(t,x))= g(x,m(t,x))
$$
a.e. in $\{m=0\}$.
\end{proof}


\end{document}